%% file: SIAM_article.tex
\documentclass[final,hidelinks,onefignum,onetabnum]{siamart251216}

\input{ex_shared}

\ifpdf
\hypersetup{
  pdftitle={The Lyapunov Equation for Linear Periodic Time Delay Systems},
  pdfauthor={I.V. Aleksandrova, and J.J.L. Vel\'azquez}
}
\fi

\usepackage{amsmath,amsfonts,amssymb}

\renewcommand{\]}{\right]}

\newcommand{\R}{\mathbb{R}}

\newcommand{\C}{\mathbb{C}}

\newcommand{\<}{\leqslant}
\renewcommand{\>}{\geqslant}

\newcommand{\ph}{\varphi}
\newcommand{\eps}{\varepsilon}
\newcommand{\Sum}{\sum\limits}

\renewcommand{\Re}{\mathrm{Re}}

\newcommand{\dd}{\mathrm{d}}
\newcommand{\eqd}{\stackrel{\textup{def}}{=}}
\newcommand{\inner}[2]{\left\langle #1, #2 \right\rangle}

\usepackage{mathrsfs}
\usepackage{graphicx}

\allowdisplaybreaks

\begin{document}

\maketitle

\begin{abstract}
For linear periodic finite-dimensional systems, it is well-known that, first, exponential stability is equivalent to the existence of a unique periodic positive definite solution to the Lyapunov equation, and second, the Lyapunov equation admits a unique periodic solution, if and only if the monodromy matrix has no reciprocal eigenvalues. In the present paper, we generalize these results to the case of periodic evolution families on a Hilbert space, with application to the stability theory of linear periodic systems with constant delays. More precisely, we first link the existence and uniqueness of a quadratic periodic Lyapunov functional with the existence and uniqueness of the solution to a discrete operator Lyapunov equation with the monodromy operator involved. Second, we show that the presented theory on a Hilbert space gives rise to an alternative definition of the delay Lyapunov matrix, the concept previously appeared in the construction of quadratic Lyapunov-Krasovskii functionals for a class of linear periodic delay systems. An explicit connection between the infinite-dimensional Hilbert setting and the previously developed delay Lyapunov matrix framework is established. An important consequence is the uniqueness theorem: the delay Lyapunov matrix exists and is unique, if and only if the monodromy operator has no reciprocal eigenvalues. As a by-product, our framework enables the construction of Lyapunov–Krasovskii functionals for periodic delay systems without a preliminary exponential stability assumption as in earlier theory. 
 \end{abstract}

\begin{keywords}
Delay systems, periodic systems, delay Lyapunov matrix, stability, Lyapunov functionals, evolution family, monodromy operator.
\end{keywords}

\begin{MSCcodes}
34K06, 34K13, 34G10, 37L05, 93D05
\end{MSCcodes}

\section{Introduction} In this paper, we are concerned with the Lyapunov stability theory for a linear periodic time delay system of the form
\begin{equation}
\label{sys_delay_periodic}
\dot x(t) =  A_0(t)x(t) + A_1(t)x(t-h)\quad\text{a.e. for}\;\, t\>t_0,
\end{equation}
where $x\!:\,[t_0-h,+\infty)\to\C^n,$ $A_j:\R\to\R^{n\times n}$ are continuous $T$-periodic matrices, i.e. $A_j(t+T)\equiv A_j(t),$ $\,j=0,1,$ and $h\>0$ is a delay. Although in most of the works initial data belong to the spaces of continuous or piecewise continuous functions, we choose the Hilbert product space $\mathcal H = \mathbb C^n\times L^2([-h,0],\mathbb C^n)$ as the state space following the usual convention in semigroup theory for the time-invariant case. Given $\ph=\bigl(\ph_0,\Phi(\cdot)\bigr)\in\mathcal H,$ let $x(\cdot)=x(\cdot,t_0,\ph)\in H^1([t_0-h,+\infty),\mathbb C^n)$ be a function such that 
\begin{equation}
\label{sys_IVP}
x(t_0)=\ph_0,\quad x(t_0+\xi)=\Phi(\xi)\quad\text{a.e. for}\; \xi\in[-h,0],
\end{equation}
that satisfies equation~(\ref{sys_delay_periodic}) for almost all $t\>t_0.$ In the standard way for time-delay systems, we define the state function as $x_t(t_0,\phi):\,\theta\to x(t+\theta,t_0,\phi),$ $\theta\in[-h,0].$ Then, $\mathrm{x}_t(t_0,\ph) =\bigl(x(t,t_0,\ph),x_t(t_0,\ph)\bigr)\in\mathcal H$ denotes a unique solution to the initial value problem~(\ref{sys_delay_periodic})--(\ref{sys_IVP}). The well-posedness of this problem and the fact that (\ref{sys_delay_periodic})--(\ref{sys_IVP}) defines an evolution family have been justified in \cite{Breda2010}.
Throughout the paper, we assume that $T\>h.$ This does not entail a loss of generality, as the value $kT,$ $k\in\mathbb{Z},$ such that $kT\>h$ can be taken as a period otherwise.

As in the finite-dimensional case, the exponential stability of system~(\ref{sys_delay_periodic}) can be analyzed either via the spectrum of the monodromy operator or through the associated Lyapunov equation. The first group of approaches is known as the Floquet theory, whose extension to the case of delay equations has been discussed in \cite{Stokes1962,Shimanov1963,Zverkin,HaleVerduyn}. It happens that, in contrast to the ODE case, the system of the Floquet solutions is in general not complete \cite{HaleVerduyn} and a reduction of the system to the one with constant coefficients is not always possible. However, it is known that the exponential stability of the system~(\ref{sys_delay_periodic}) is equivalent to the fact that the spectrum of the monodromy operator lies inside the unit circle. There is a number of numerical studies related to approximating the spectrum by discretizing either the monodromy operator or the system of equations itself (see e.g. \cite{Butcher2004,MichielsFenzi2020}).

The second group of approaches extends the second Lyapunov method. It is particularly well developed for the case of the matrices $A_0$ and $A_1$ being constant, where the Lyapunov--Krasovskii functionals with prescribed negative definite derivatives along the solutions of a system deliver necessary and sufficient exponential stability conditions \cite{Repin, InfanteCastelan, Datko1980, Huang, KharZhab2003, Khar_book}. The concept of delay Lyapunov matrix plays a central role in the theory. Remarkably, being just a finite-dimensional object in a single delay case, this matrix allows proving the finite-dimensional stability criteria for time-invariant delay systems \cite{GomEgMond2019,MondieRev2022,AlBel2025}. The fundamental issues such as existence, uniqueness and construction of delay Lyapunov matrices are well addressed \cite{Khar_book}. Similarly to the finite-dimensional case,
the delay Lyapunov equation has found interesting applications in control theory (see e.g. \cite{Jarlebring2011,Gomez2025}).

The first attempt to generalize the delay Lyapunov matrix framework to the case of a periodic system~(\ref{sys_delay_periodic}) appears in \cite{LetZhab2009, ZhabLet2009IFAC}. There, construction of Lyapunov functionals with prescribed derivatives was addressed, the concept of the delay Lyapunov matrix was introduced for exponentially stable systems and its properties have been studied. In \cite{Gomez2016}, the necessary stability conditions based exclusively on the delay Lyapunov matrix, similar to those given in \cite{MondieRev2022} for a time-invariant case, have been proven. The theory was further refined in \cite{Gomez2019_periodic}, with particular emphasis on the computational aspect. The concept of a dual Lyapunov matrix, together with its application to $\mathcal H_2$ analysis and balancing is studied in \cite{MichGom2020}.

The delay Lyapunov matrix framework stems from solving the following problem:
construct a quadratic Lyapunov functional $v_0(t,\ph)$
such that it is differentiable along the solutions of system~(\ref{sys_delay_periodic}) almost everywhere, and
\begin{align}
\label{derivative_v_0}
\dfrac{\dd v_0(t,\mathrm{x}_t(t_0,\ph))}{\dd t} = - x^\star(t,t_0,\ph) W(t) x(t,t_0,\ph)\quad \text{a.e. for}\;\,t\>t_0\>0,
\end{align}
where $W(t)=W^T(t)$ is a real continuous $T$-periodic matrix. This problem is motivated by the Lyapunov--Krasovskii-type stability results \cite[Theorem 1.8]{Khar_book}, as
if $W(t)$ is positive definite for all $t\>0,$ existence of such a functional which satisfies in addition some positive lower and upper bounds implies that system~(\ref{sys_delay_periodic}) is exponentially stable.
The solution to this problem leads to the following

\begin{definition} \textup{\cite{LetZhab2009, Gomez2019_periodic}}
\label{def_Lyap_matrix}
    A matrix $U\in\mathcal{C}(\R^2,\R^{n\times n})$ satisfying the following properties:

    $1.$ PDE property:
\begin{align*}
    &\dfrac{\partial U(\tau,s)}{\partial \tau} = -A_0^T( \tau) U( \tau,s) - A_1^T( \tau+h) U( \tau+h,s),\quad \forall\, s> \tau,
\end{align*}

$2.$ symmetry property:
\begin{align*}
    &U^T( \tau,s)=U(s, \tau),\quad\forall\,  \tau,s\in\R,
    \end{align*}
    
$3.$ ODE property:  
    \begin{align*}
    \dfrac{\textup{d} U( \tau, \tau)}{\textup{d}  \tau} &= -A_0^T( \tau) U( \tau, \tau) - A_1^T( \tau+h) U( \tau+h, \tau)\\ &- U( \tau, \tau) A_0( \tau)
    -U( \tau, \tau+h)A_1( \tau+h)-W(\tau)\quad \forall\,  \tau\in\R,
    \end{align*}

$4.$ periodicity property:
$$
U( \tau+T,s+T)=U( \tau,s),
$$
is called \textbf{the delay Lyapunov matrix} of system~\textup{(\ref{sys_delay_periodic})} associated with $W(\tau).$
\end{definition}
The fundamental result of \cite{LetZhab2009} is that, given a delay Lyapunov matrix, the quadratic functional of the form
\begin{align}
v_0(t_0,\ph) &= \ph_0^\star U(t_0,t_0) \ph_0 +2\Re\left(\ph_0^\star \int_{-h}^0 U(t_0,t_0+\theta+h) A_1(t_0+\theta+h)\Phi(\theta) \textup{d}\theta \right)\notag\\
\label{func_v_0_old_old}
&+  \int_{-h}^0  \int_{-h}^0 \Phi^\star(\theta) A_1^T(t_0+\theta+h) U(t_0+\theta+h,t_0+s+h)\\ &\times A_1(t_0+s+h) \Phi(s) \textup{d}s \textup{d}\theta ,\notag
\end{align}
where $\ph=\bigl(\ph_0,\Phi(\cdot)\bigr)\in\mathcal H,$
precisely satisfies (\ref{derivative_v_0}), which extends the construction of \cite{KharZhab2003} to the periodic case. 
The existence theorem for the delay Lyapunov matrix without assuming exponential stability was recently established in \cite{Egorov2025}. However, the theory still contains significant gaps, primarily concerning uniqueness and construction issues. We note that the computational difficulty comes from the non-standard boundary condition (see the ODE property of Definition~\ref{def_Lyap_matrix}). Moreover, the construction issue is non-trivial even in the case of $h=T,$ where the delay Lyapunov matrix is known to satisfy a hyperbolic PDE system without delay \cite{LetZhab2009}. Some recent advances in this direction can be found in \cite{AlVel_Lyap_matr_scalar,AlVel_Lyap_matr_vector}.

The main contribution of the present paper is the following uniqueness theorem: the delay Lyapunov matrix exists and is unique for any $T$-periodic $W(\tau),$ if and only if the monodromy operator associated to system~\textup{(\ref{sys_delay_periodic})} has no reciprocal eigenvalues. To prove the result, we explore the representation of system~\textup{(\ref{sys_delay_periodic})} as an evolution family in the setting close to \cite{Breda2010}. In Section~\ref{sec_general_Hilbert_setting}, we derive the necessary and sufficient condition for the existence of the functional satisfying an analogue of condition (\ref{derivative_v_0}) for an evolution family. This result explicitly links the existence of the functional with the spectrum of the monodromy operator through solving a discrete operator Lyapunov equation and constitutes a natural extension of the well-known theory for ODEs \cite{BittantiBolzernColaneri1984, BolzernColaneri1988} which is recalled in Section~\ref{sec_ODE}. Despite this, we were not able to find a clear description of this condition in the literature. A close development of the Lyapunov stability theory for the case of general evolution families can be found in \cite{Datko1972}. 
In Section~\ref{sec:main}, we apply the abstract result of Section~\ref{sec_general_Hilbert_setting} to a particular case of system~\textup{(\ref{sys_delay_periodic})}. The major part of this section is devoted to explicitly linking the abstract Hilbert setting and the delay Lyapunov matrix framework described above, a connection that is far from trivial. It is shown that an application of the abstract setting to system~\textup{(\ref{sys_delay_periodic})} gives rise to the concept of the delay Lyapunov matrix as a solution to an integral equation, which is equivalent to those defined above. In fact, the delay Lyapunov matrix already arises while studying the discrete operator Lyapunov equation, where an explicit representation of the monodromy operator is employed. In Section~\ref{sec_existence_uniqueness}, the existence and uniqueness of the delay Lyapunov matrix is proved using the abstract setting. In fact, our approach reveals what actually underlies an abstract operator Lyapunov equation and an abstract Lyapunov functional $\inner{\ph}{\mathcal P(t)\ph}$ for a particular case of system~\textup{(\ref{sys_delay_periodic})}. As a by-product, 
our development allows constructing the functional~(\ref{func_v_0_old_old}) directly from the abstract setting, without a preliminary stability assumption as in \cite{LetZhab2009}.
Our approach provides a different existence proof of the delay Lyapunov matrix independent of those in \cite{Egorov2025} which is based on the analytic continuation. 

\textbf{Notation.} In this paper, $\mathcal H$ denotes a Hilbert space with the scalar product $\inner{\cdot}{\cdot};$  $B(\mathcal H)$ is the space of linear bounded operators acting from $\mathcal H$ to $\mathcal H;$ the space of continuous functions from $A$ to $B$ endowed with the standard uniform norm is denoted by $\mathcal{C}(A,B);$ $L^2(A,B)$ is the space of square integrable functions from $A$ to $B,$ and $H^1(A,B)=\{\psi\in L^2(A,B):\, \psi'\in L^2(A,B)\},$ where the derivative is understood in a weak sense; $\sigma(M)$ stands for the spectrum of a matrix or an operator $M$ depending on the context; $\mathcal P^\star$ is the adjoint operator to $\mathcal P;$ for $x\in\C^n,$ $x^\star=\bar{x}^T,$ where bar denotes the complex conjugation; $\R^n\, (\C^n)$ is the space of real-valued (complex-valued) $n$-dimensional vectors; $\R_+$ stands for the set of nonnegative real numbers.

\section{The ODE periodic Lyapunov equation}
\label{sec_ODE}
As a starting point, we first recall the ODE Lyapunov equation results developed in
\cite{BittantiBolzernColaneri1984, BolzernColaneri1988, DemMatv2001}.
Given a periodic ODE system
\begin{equation}
\label{sys_ODE}
    \dot x(t) = A_0(t)x(t),\quad A_0(t+T)=A_0(t),
\end{equation}
consider the associated boundary-value problem for the Lyapunov equation:
\begin{align}
\label{ODE_Lyapunov}
&\dfrac{\dd P(t)}{\dd t} + A_0^T(t)P(t) + P(t)A_0(t) = -W(t),\quad t\in [0,T],\\
&P(0)=P(T),\notag
\end{align}
where $W(t)=W^T(t)$ is a continuous positive definite matrix such that $W(0)=W(T).$ If $P(t)$ is a solution to~(\ref{ODE_Lyapunov}) on $[0,T]$ then $v(x)=x^T P(t)x$ is a Lyapunov function for system~(\ref{sys_ODE}), with $P(t)$ periodically extended to $\R.$ On the other hand, given the fundamental matrix $\Phi (t,t_0)$ of system~(\ref{sys_ODE}),
\begin{align*}
\dfrac{\partial \Phi (t,t_0)}{\partial t} &= A_0(t)\Phi (t,t_0),\quad
\Phi (t_0,t_0) = I,
\end{align*}
with the properties $\Phi (t,\tau)\Phi (\tau,s)=\Phi (t,s),$ $\,\Phi^{-1}(t,\tau)=\Phi(\tau,t)$ $\,\forall\; t,\tau,s,$  
one can write the general solution to equation~(\ref{ODE_Lyapunov}):
\begin{align}
\label{solution_gen_Lyapunov}
P(t) = \Phi^T (0,t) P(0)\Phi(0,t) - \int_0^t \Phi^T (\xi,t)W(\xi) \Phi (\xi,t) \dd \xi.
\end{align}
An application of the boundary condition $P(0)=P(T)$ brings us to the associated 
discrete Lyapunov (also known as Stein) matrix equation (cf. \cite{Stein}):
\begin{align}
\label{eq_discrete_Lyap}
& P(0) - M^T P(0) M  = W_T,\\
&W_T=\int_0^T \Phi^T (\xi,0)W(\xi) \Phi (\xi,0) \dd \xi,\notag
\end{align}
where $M=\Phi(T,0)$ is the monodromy matrix of system~(\ref{sys_ODE}).
\begin{theorem} \textup{\cite{BittantiBolzernColaneri1984, BolzernColaneri1988}}
\label{thm_ODE}
Given a positive definite matrix $W(t),$ the following holds.

$(i)$ The boundary-value problem~\textup{(\ref{ODE_Lyapunov})} admits a unique solution $P(t),$ equivalently, the matrix equation~\textup{(\ref{eq_discrete_Lyap})} admits a unique solution $P(0),$ if and only if there are no $\lambda,\mu\in\sigma(M)$ such that $\lambda\mu=1.$

$(ii)$ The solution $P(t)$ to \textup{(\ref{ODE_Lyapunov})} is positive definite for all $t,$ equivalently, the solution $P(0)$ to \textup{(\ref{eq_discrete_Lyap})} is positive definite, if and only if $|\lambda|<1$ for any $\lambda\in\sigma(M).$
\end{theorem}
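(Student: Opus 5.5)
We first reduce both statements about the boundary-value problem~(\ref{ODE_Lyapunov}) to statements about the Stein equation~(\ref{eq_discrete_Lyap}). By variation of constants, every solution of the differential equation in~(\ref{ODE_Lyapunov}) has the form~(\ref{solution_gen_Lyapunov}) and is determined by $P(0)$. Evaluating~(\ref{solution_gen_Lyapunov}) at $t=T$ gives
\begin{align*}
P(T)=M^{-T}\Bigl(P(0)-\int_0^T \Phi^T(\xi,0)W(\xi)\Phi(\xi,0)\,\dd\xi\Bigr)M^{-1},
\end{align*}
using $\Phi(\xi,T)=\Phi(\xi,0)M^{-1}$. Hence the condition $P(T)=P(0)$ is equivalent to $P(0)-M^TP(0)M=W_T$. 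This gives a bijection between $T$-periodic solutions $P(t)$ of~(\ref{ODE_Lyapunov}) and solutions $P(0)$ of~(\ref{eq_discrete_Lyap}). The same formula shows that the positivity of $P(t)$ for all $t$ is equivalent to the positivity of $P(0)$: periodic continuation gives $P(t)=\Phi^T(T,t)P(T)\Phi(T,t)-\int_t^T\Phi^T(\xi,t)W(\xi)\Phi(\xi,t)\dd\xi$, so one can equally write
\begin{align*}
P(t)=\Phi^T(T,t)P(0)\Phi(T,t)+\int_t^T \Phi^T(\xi,t)W(\xi)\Phi(\xi,t)\,\dd\xi,
\end{align*}
which is positive definite whenever $P(0)$ is, since $\Phi(T,t)$ is invertible. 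The converse direction is trivial.

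For $(i)$, consider the linear operator $\mathcal L(X)=X-M^TXM$ on $\C^{n\times n}$. Writing $\mathcal L=I-\mathcal K$ with $\mathcal K(X)=M^TXM$, and using the Kronecker representation $\mathrm{vec}(M^TXM)=(M^T\otimes M^T)\mathrm{vec}(X)$, the eigenvalues of $\mathcal K$ are exactly the products $\lambda\mu$ with $\lambda,\mu\in\sigma(M)$. Therefore $\mathcal L$ is invertible iff $1\neq\lambda\mu$ for all such pairs.

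Invertibility gives unique solvability for every right-hand side. The converse requires care, because the theorem is stated for a given $W$. If $\lambda\mu=1$, then $\mathcal L$ is singular and has a nontrivial kernel. Consequently, whenever a solution exists it is not unique: adding any kernel element to a solution gives another one. If no solution exists, uniqueness fails as well, in the sense that no unique solution exists. Either way, a unique solution forces $\lambda\mu\neq1$. Real symmetric solutions are obtained from uniqueness, since the transpose and the conjugate of a solution are also solutions.

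For $(ii)$, suppose first that $\rho(M)<1$. Then the Neumann series $P(0)=\sum_{k\ge0}(M^T)^kW_TM^k$ converges and solves~(\ref{eq_discrete_Lyap}). It is positive definite because $W_T>0$: this in turn holds since $W>0$ and $\Phi(\xi,0)$ is invertible.

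Conversely, suppose $P(0)>0$ solves~(\ref{eq_discrete_Lyap}), and let $Mv=\lambda v$ with $v\ne0$. Applying $v^\star(\cdot)v$ to the equation gives
\begin{align*}
(1-|\lambda|^2)\,v^\star P(0)v=v^\star W_Tv>0,
\end{align*}
so $|\lambda|<1$.

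The main subtlety is the ``only if'' direction in $(i)$ for a fixed $W$, that is, handling the case where the equation has no solution at all. I would address it by interpreting ``admits a unique solution'' as ``exactly one solution exists''. Under that reading, a singular $\mathcal L$ always violates it, as argued above.
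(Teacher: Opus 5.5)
Your proof is correct. The paper does not prove this theorem; it is quoted from Bittanti--Bolzern--Colaneri. However, the discussion in Section~2 uses exactly your reduction: the variation-of-constants formula~(\ref{solution_gen_Lyapunov}), the boundary condition $P(0)=P(T)$ leading to~(\ref{eq_discrete_Lyap}), the rewriting~(\ref{solution_gen_Lyapunov_modified}), and the Neumann series in the stable case. Your Kronecker-product argument for $(i)$ is the finite-dimensional instance of the Lumer--Rosenblum spectral result behind Lemma~\ref{thm_eq_Lyapunov_operator}. Your eigenvector argument for the converse in $(ii)$ is a more elementary substitute for the Datko-type Lyapunov argument the paper uses in the Hilbert analogue, Theorem~\ref{thm_Hilbert_stability}. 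It suffices here because in finite dimensions the spectrum consists of eigenvalues; Datko's route is what survives in infinite dimensions.

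A few small points to tighten:

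\textbf{Sign slip.} The intermediate formula with base point $T$ should read $P(t)=\Phi^T(T,t)P(T)\Phi(T,t)+\int_t^T\Phi^T(\xi,t)W(\xi)\Phi(\xi,t)\,\dd\xi$, since $-\int_T^t=\int_t^T$. This is just variation of constants from $T$, not periodic continuation. Your final display is correct.

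\textbf{Symmetric kernel element in $(i)$.} If solutions of~(\ref{eq_discrete_Lyap}) are required to be real symmetric, as is natural for a Lyapunov equation, non-uniqueness in the singular case needs a nonzero real symmetric element of $\ker(X\mapsto X-M^TXM)$, not just any kernel element. Take $M^Ta=\lambda a$ and $M^Tb=\mu b$ with $\lambda\mu=1$. Then $X=ab^T+ba^T$ is a nonzero complex symmetric matrix with $M^TXM=\lambda\mu X=X$. Since $M$ is real, $\mathrm{Re}\,X$ or $\mathrm{Im}\,X$ is a nonzero real symmetric kernel element.

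\textbf{Uniqueness in $(ii)$.} State explicitly that $\rho(M)<1$ implies the Lyapunov condition. By $(i)$, the Neumann series is then \emph{the} solution.
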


Obviously, the case $(ii)$ corresponds to the exponential stability of system~(\ref{sys_ODE}). In this case,
\begin{align*}
P(0) &= \sum_{k=0}^{+\infty} (M^T)^k W_T M^k = \int_0^{+\infty} \Phi^T (\xi,0)W(\xi) \Phi (\xi,0) \dd \xi, \\
P(t) &= \int_t^{+\infty} \Phi^T (\xi,t)W(\xi) \Phi (\xi,t) \dd \xi
\end{align*}
are positive definite solutions to the matrix equation~(\ref{eq_discrete_Lyap}) and to the boundary-value problem~\textup{(\ref{ODE_Lyapunov})}, respectively. Note that the general solution formula~(\ref{solution_gen_Lyapunov}) contains actually an inverse of the fundamental matrix as $\Phi (\xi,t)=\Phi^{-1} (t,\xi),$ $t\>\xi.$ However, a crucial fact is that using equation~(\ref{eq_discrete_Lyap}) it is possible to rewrite formula~(\ref{solution_gen_Lyapunov}) as
\begin{align}
\label{solution_gen_Lyapunov_modified}
P(t) = \Phi^T (T,t) P(0)\Phi(T,t) + \int_t^T \Phi^T (\xi,t)W(\xi) \Phi (\xi,t) \dd \xi,\quad t\in [0,T],
\end{align}
which is extendable to the infinite-dimensional setting as shown in the next section.

\section{The periodic Lyapunov equation in a Hilbert space}
\label{sec_general_Hilbert_setting}
Here, we present an extension of the result given by Theorem~\ref{thm_ODE} to the case of evolution families on a Hilbert space, where we focus primarily on case $(i)$ of Theorem~\ref{thm_ODE} where no stability assumption is made. Our development is in the spirit of R. Datko's work~\cite{Datko1972}. Note that an equation similar to~(\ref{discr_Lyap}) below appears already in \cite{DK} in the context of exponential dichotomy tests.

Let $\mathcal H$ be a Hilbert space with the scalar product $\langle\cdot,\cdot \rangle$ and the induced norm $\|\cdot\|=\sqrt{\inner{\cdot}{\cdot}}.$
\begin{definition} \textup{\cite{ChiLat}}
\label{def_evolution_family}
A family of mappings $\{S(t,t_0)\}\subset \mathcal B(\mathcal H),$ $0\<t_0\<t<\infty,$ is called a strongly continuous, exponentially bounded evolution family on $\mathcal H,$ if
\begin{align*}
&1.\quad S(t,\xi)S(\xi,t_0)=S(t,t_0),\quad S(t_0,t_0)=I\quad\forall\; 0\<t_0\<\xi\<t,\\ 
&2.\quad\text{the function}\; (t,t_0)\to S(t,t_0)z\;\text{is continuous for all}\;z\in\mathcal H,\; 0\<t_0\<t,\notag\\
&3.\quad \exists\, M\>1,\;\omega>0:\quad\|S(t,t_0)\|\<M e^{\omega(t-t_0)} \quad \forall\;0\<t_0\<t.\notag
\end{align*}
If, in addition, $S(t+T,t_0+T)\equiv S(t,t_0)$ for all $t\>t_0\>0$ then $\{S(t,t_0)\}_{t\>t_0}$ is called a periodic evolution family of period $T.$
\end{definition}
In the sequel, we assume that $S(t,t_0)$ is a strongly continuous, exponentially bounded periodic evolution family of period $T$ on $\mathcal H.$ If there exists $\omega<0$ such that the third condition of Definition~\ref{def_evolution_family} is satisfied then the evolution family $S(t,t_0)$ is called uniformly exponentially stable. The monodromy operator $\mathcal U\!:\mathcal H\to\mathcal H$ is defined as
$$
\mathcal U z=S(T,0)z.
$$
\begin{remark}
    Below, we assume that both $S(t,t_0)$ and $S^\star(t,t_0),$ $t\>t_0,$ act from $\mathcal H$ to $\mathcal H.$ In principle, if one needs to work with the associated abstract Cauchy problem under some well-posedness conditions \textup{\cite{ChiLat}}, the evolution family as a solution operator could also be defined on a smaller domain, see \cite{ChiLat} for details.
\end{remark}
\begin{definition} We say that $\langle z,\mathcal P(t)z \rangle$ is a quadratic $T$-periodic functional on $\mathcal H,$ if $\mathcal P\!:\, \R_+\to \mathcal B(\mathcal H),$ $\mathcal P(t)=\mathcal P^\star(t),$ $\,\mathcal P(t+T)\equiv \mathcal P(t)$ for all $t\>0,$ and the function $t\to\mathcal P(t)z$ is continuous for all $z\in\mathcal H$ and $t\>0.$
\label{def_quadr_functional}
\end{definition}
Consider the function
\begin{align*}
z(t,t_0,z_0)\eqd S(t,t_0)z_0,\quad z_0\in\mathcal H.
\end{align*}

\textbf{Problem~1}. Given a quadratic $T$-periodic functional $\langle z,\mathcal W(t)z \rangle$ on $\mathcal H,$ find a quadratic $T$-periodic functional
\begin{equation*}
    v(t,z) = \langle z, \mathcal P(t)z \rangle,\quad z\in\mathcal H, 
\end{equation*}
such that the function $t\to v(t,z(t,t_0,z_0))$ is differentiable for
$t\>t_0,$ and 
\begin{equation}
\label{func_derivative}
    \dfrac{\dd }{\dd t} v(t,z(t,t_0,z_0)) = -\langle z(t,t_0,z_0), \mathcal W (t) z(t,t_0,z_0) \rangle\;\, \forall \,t\>t_0\>0,\;\,\forall \,z_0\in\mathcal H.
\end{equation}
\begin{theorem}
\label{thm_Hilbert}
Functional $v(t,z) = \langle z, \mathcal P(t)z \rangle$ solves Problem\textup{~1}, if and only if
\begin{align}
\label{P_operator_formula}
\mathcal P(t) &= S^\star(T,t)\mathcal P(0) S(T,t) + \int_t^T S^\star (\xi,t) \mathcal W(\xi) S(\xi,t)\dd \xi,\quad t\in[0,T],\\
\mathcal P(t) &= \mathcal P(t-T)\quad \forall\, t\>T,\notag
\end{align}
where $\mathcal P(0)\in\mathcal B(\mathcal H),$ $\mathcal P(0)=\mathcal P^\star(0),$ is a solution to the discrete operator Lyapunov equation
\begin{align}
\mathcal P(0) - \mathcal U^\star \mathcal P(0)\mathcal U = \int_0^T S^\star(\xi,0)\mathcal W(\xi) S(\xi,0)\dd \xi.\label{discr_Lyap}
\end{align}
\end{theorem}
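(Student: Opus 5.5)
We prove both directions by reducing the differential condition (\ref{func_derivative}) to an integral identity along trajectories. The identity is then evaluated on one period using the evolution property and periodicity of $S$. The basic observation is the following. A function $t\to v(t,z(t,t_0,z_0))$ is differentiable with derivative $-\langle z,\mathcal W z\rangle$ on $[t_0,\infty)$ if and only if, for all $t\>t_0$,
\begin{equation*}
\langle z_0,\mathcal P(t_0)z_0\rangle = \langle S(t,t_0)z_0,\mathcal P(t)S(t,t_0)z_0\rangle + \int_{t_0}^{t}\langle S(\xi,t_0)z_0,\mathcal W(\xi)S(\xi,t_0)z_0\rangle\dd\xi .
\end{equation*}
This holds because the integrand is continuous in $\xi$: the map $\xi\to S(\xi,t_0)z_0$ is continuous, $\mathcal W$ is strongly continuous, and $\mathcal W$ is locally bounded by the uniform boundedness principle on compacts. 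The fundamental theorem of calculus then gives both implications. Since both $\mathcal P(t)$ and the right-hand side are self-adjoint, the quadratic identity for all $z_0$ is equivalent, by polarization, to the operator identity
\begin{equation*}
\mathcal P(t_0)=S^\star(t,t_0)\mathcal P(t)S(t,t_0)+\int_{t_0}^t S^\star(\xi,t_0)\mathcal W(\xi)S(\xi,t_0)\dd\xi \quad \forall\, t\>t_0\>0,
\end{equation*}
with the integral understood in the strong (or weak) sense.

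\textbf{Necessity.} Take $t_0\in[0,T]$ and $t=T$. Then $\mathcal P(T)=\mathcal P(0)$ by periodicity, which is exactly (\ref{P_operator_formula}). Taking $t_0=0$, $t=T$ gives (\ref{discr_Lyap}), since $S(T,0)=\mathcal U$. Periodic extension is part of the definition of a $T$-periodic functional.

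\textbf{Sufficiency.} Define $\mathcal P$ by (\ref{P_operator_formula}) from a self-adjoint solution $\mathcal P(0)$ of (\ref{discr_Lyap}). We must check three things.

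First, the formula at $t=0$ returns $\mathcal P(0)$; this is precisely (\ref{discr_Lyap}). At $t=T$ it also returns $\mathcal P(0)$, since $S(T,T)=I$ and the integral vanishes. So $\mathcal P$ is consistent and $T$-periodic.

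Second, $\mathcal P$ is self-adjoint, bounded, and strongly continuous in $t$. Strong continuity of $t\to S^\star(T,t)\mathcal P(0)S(T,t)z$ is the delicate point, since only strong continuity of $S$ is given and adjoints need not be strongly continuous. We plan to rely on the paper's standing Remark that $S^\star(t,t_0)$ acts on $\mathcal H$, and establish at least weak continuity of $\langle z,\mathcal P(t)z\rangle$. If full strong continuity is required, we will argue it from the continuity of $\langle S(T,t)z,\mathcal P(0)S(T,t)y\rangle$ in $t$, together with uniform bounds on $\|S\|$. At $t=T$ we must also check that the periodically extended function stays continuous, which follows from the matching values above.

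Third, we verify the operator identity for all $t\>t_0$. For $t_0\<t$ in $[0,T]$ we use $S(T,t_0)=S(T,t)S(t,t_0)$ and $S(\xi,t_0)=S(\xi,t)S(t,t_0)$ for $\xi\>t$. This gives
\begin{equation*}
S^\star(t,t_0)\mathcal P(t)S(t,t_0)=S^\star(T,t_0)\mathcal P(0)S(T,t_0)+\int_t^T S^\star(\xi,t_0)\mathcal W(\xi)S(\xi,t_0)\dd\xi .
\end{equation*}
Adding $\int_{t_0}^t$ yields $\mathcal P(t_0)$. For general $t\>t_0$ we split $[t_0,t]$ at the multiples of $T$. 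Periodicity $S(t+T,s+T)=S(t,s)$ and $\mathcal W(\xi+T)=\mathcal W(\xi)$ reduce each piece to the case of the interval $[0,T]$ or a subinterval of it. The identities then chain by the cocycle property, giving the integral identity on each interval and hence (\ref{func_derivative}).

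The main obstacle is the regularity bookkeeping rather than the algebra. It consists of justifying the differentiation and integration of the quadratic forms with only strongly continuous families, and of passing from quadratic to operator identities. The algebraic heart is the chaining step via the evolution property.
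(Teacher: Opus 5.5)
Your proposal follows the paper's proof. Necessity is obtained by integrating (\ref{func_derivative}) from $t_0\in[0,T]$ to $T$ and using $\mathcal P(T)=\mathcal P(0)$, with $t_0=0$ giving (\ref{discr_Lyap}). Sufficiency uses periodicity and the cocycle property to rewrite $\mathcal P(t)$ on $[kT,(k+1)T]$, so that $v(t,S(t,t_0)z_0)$ is a constant plus $\int_t^{(k+1)T}\langle S(\xi,t_0)z_0,\mathcal W(\xi)S(\xi,t_0)z_0\rangle\,\mathrm{d}\xi$, and (\ref{discr_Lyap}) gives consistency at $t=kT$. Your integral-identity formulation, the polarization and the chaining across periods only reorganize the same computation; the paper stays with quadratic forms, which suffices.

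The one step of your plan that would not go through is the fallback for strong continuity of $t\mapsto\mathcal P(t)z$. Continuity of $t\mapsto\langle S(T,t)z,\mathcal P(0)S(T,t)y\rangle$ plus uniform bounds on $\|S\|$ gives only weak continuity. Strong continuity can actually fail, because adjoints of a strongly continuous evolution family need not be strongly continuous. For instance, take a unit vector $e_1$ and a bounded $T$-periodic $f\colon\mathbb{R}\to\mathcal H$ with $f(t)\perp e_1$ that is weakly but not strongly continuous (e.g.\ unit bumps along an orthonormal sequence orthogonal to $e_1$, accumulating at $t=0$). Put $S(t,s)z=z+\langle f(t)-f(s),z\rangle e_1$. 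This is a strongly continuous, bounded, $T$-periodic evolution family with $\mathcal U=I$. For $\mathcal W\equiv 0$ and $\mathcal P(0)=I$, formula (\ref{P_operator_formula}) gives $\mathcal P(t)e_1=e_1+f(0)-f(t)$, which is not norm continuous, even though $v$ is constant along every trajectory.

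The paper's proof never checks this requirement of Definition~\ref{def_quadr_functional}; it only verifies differentiability of $v$ along trajectories. So this gap is shared with the paper (and, read literally, with the sufficiency direction of the statement) rather than introduced by you. Closing it needs an extra hypothesis, such as strong continuity of $t\mapsto S^\star(\xi,t)y$, or reading the continuity requirement in Definition~\ref{def_quadr_functional} in the weak sense.
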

\begin{proof}
    \textit{Necessity}. Assume that $v(t,z) = \langle z, \mathcal P(t)z \rangle$ is a quadratic $T$-periodic functional which satisfies equality~(\ref{func_derivative}). Let us integrate this equality from $t_0\in[0,T]$ to $T$ and substitute the function $z(t,t_0,z_0)=S(t,t_0)z_0,$ where $ z_0\in\mathcal{H}$ is arbitrary:
$$
\langle S(T,t_0)z_0, \mathcal P(T) S(T,t_0)z_0 \rangle - \langle z_0, \mathcal P(t_0) z_0 \rangle = -\int_{t_0}^{T} \langle S(\xi,t_0)z_0, \mathcal W(\xi) S(\xi,t_0)z_0 \rangle \dd \xi.
$$
For $t_0=0,$ using $\mathcal P(T)=\mathcal P(0)$ and $\mathcal U=S(T,0),$ we get 
$$
\langle\mathcal U z_0, \mathcal P(0)\mathcal U z_0 \rangle - \langle z_0, \mathcal P(0) z_0 \rangle = - \int_0^T \inner{S(\xi,0)z_0}{\mathcal W(\xi) S(\xi,0)z_0}\dd \xi\quad\forall\, z_0\in\mathcal{H},
$$
which is exactly the equation~(\ref{discr_Lyap}).
Now, for $t_0=t\in[0,T],$ with $\mathcal P(T)=\mathcal P(0),$ we obtain
$$
\langle S(T,t)z_0, \mathcal P(0) S(T,t) z_0 \rangle - \langle z_0, \mathcal P(t)z_0 \rangle = -\int_t^T \langle S(\xi,t) z_0, \mathcal W(\xi) S(\xi,t) z_0 \rangle \dd \xi,
$$
for any $t\in[0,T],$ $z_0\in\mathcal{H},$ which coincides with (\ref{P_operator_formula}).

     \textit{Sufficiency}. By definition, for $t\in[kT,(k+1)T]$, $k\in\mathbb Z,$ we have 
 \begin{align*}
\mathcal P(t) &=\mathcal P(t-kT) = S^\star(T,t-kT)\mathcal P(0) S(T,t-kT) \\ &+ \int_{t-kT}^T S^\star (\xi,t-kT) \mathcal W(\xi) S(\xi,t-kT)\dd \xi.
 \end{align*}
Using periodicity of the mappings $S$ and $\mathcal W,$ we get
\begin{align}
\label{formula_P_kt}
\mathcal P(t) &=  S^\star((k+1)T,t)\mathcal P(0) S((k+1)T,t) \\ &+ \int_{t}^{(k+1)T} S^\star (\xi,t) \mathcal W(\xi) S(\xi,t)\dd \xi,\quad t\in[kT,(k+1)T].\notag
\end{align}
     Now, given $k\in\mathbb Z$ and $t\in[kT,(k+1)T]$, consider
     $$
v(t,z(t,t_0,z_0)) =\langle S(t,t_0)z_0, \mathcal P(t)S(t,t_0)z_0 \rangle,\quad t\>t_0\>0,\quad z_0\in\mathcal H.
     $$
Substituting the expression for $\mathcal P(t)$ and using the definition of the evolution family, we further get 
\begin{align*}
    v(t,z(t,t_0,z_0))&= \langle S((k+1)T,t_0)z_0, \mathcal P(0) S((k+1)T,t_0)z_0 \rangle\\
&+ \int_{t}^{(k+1)T} \langle S(\xi,t_0)z_0, \mathcal W(\xi)  S(\xi,t_0)z_0 \rangle \dd \xi,\quad t\in[kT,(k+1)T].
\end{align*}
Since $S(\xi,t_0)z_0$ is continuous by definition for any $z_0\in\mathcal H,$ 
we conclude that the function $v(t,z(t,t_0,z_0))$ is differentiable for $t\>t_0\>0,$ with the derivative equal to (\ref{func_derivative}). 
It remains to mention that for $t=0$ equation~(\ref{P_operator_formula})  is consistent with  (\ref{discr_Lyap}) for which the existence of the solution is assumed.
\end{proof}

\begin{remark}
Clearly, Theorem~\ref{thm_Hilbert} implies that the necessary and sufficient condition for the existence of the Lyapunov functional solving Problem~1 is solvability of the operator equation (\ref{discr_Lyap}). There is no stability assumption in Theorem~\ref{thm_Hilbert}.
It is important to note that in the proof we neither assume the invertibility of the mapping $S(t,t_0)$ nor the differentiability of $\mathcal P(t).$ Moreover, $\mathcal P(t)$ is not always differentiable in a particular case of system~(\ref{sys_delay_periodic}) as can be seen in the next section. We also note that Theorem~\ref{thm_Hilbert} remains true if the equality~(\ref{func_derivative}) is satisfied almost everywhere.
\end{remark}

We now focus on the solvability condition for the operator equation~(\ref{discr_Lyap}) which is known in the literature.

\begin{lemma} \textup{\cite{LumRos1959,Przyluski1980}}
\label{thm_eq_Lyapunov_operator}
If $1\notin \sigma(\mathcal U)\cdot \overline{\sigma(\mathcal U)}$ then the operator equation~\textup{(\ref{discr_Lyap})}
has a unique self-adjoint solution $\mathcal P(0)\in\mathcal B(\mathcal H)$ for any self-adjoint $\mathcal W\!:\,\R_+\to\mathcal B(\mathcal H).$
\end{lemma}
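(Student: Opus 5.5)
The plan is to recast the discrete operator Lyapunov equation~(\ref{discr_Lyap}) as a linear equation $(\mathcal I-\mathcal L)X=Q$ on the Banach space $\mathcal B(\mathcal H)$. Here $\mathcal L X=\mathcal U^\star X\mathcal U$ and $Q=\int_0^T S^\star(\xi,0)\mathcal W(\xi)S(\xi,0)\,\dd\xi$, the integral being understood in the strong sense. It is bounded by the exponential bound in Definition~\ref{def_evolution_family} and the strong continuity of $\mathcal W$ (uniform boundedness on $[0,T]$). The claim is then equivalent to $1\notin\sigma(\mathcal L)$, since in that case $X=(\mathcal I-\mathcal L)^{-1}Q$ exists and is unique.

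First, I would factor $\mathcal L=\mathcal L_1\mathcal L_2$ with $\mathcal L_1X=\mathcal U^\star X$ (left multiplication) and $\mathcal L_2X=X\mathcal U$ (right multiplication). These two operators commute and are bounded on $\mathcal B(\mathcal H)$. Their spectra satisfy $\sigma(\mathcal L_1)\subseteq\sigma(\mathcal U^\star)=\overline{\sigma(\mathcal U)}$ and $\sigma(\mathcal L_2)\subseteq\sigma(\mathcal U)$, because $(\mathcal U^\star-\lambda)^{-1}$ and $(\mathcal U-\mu)^{-1}$ induce the resolvents of left and right multiplication directly.

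Second, I would invoke the spectral mapping theorem for commuting operators (Lumer--Rosenblum): inside the commutative Banach algebra generated by $\mathcal L_1,\mathcal L_2$ and their resolvents, characters give $\sigma(\mathcal L_1\mathcal L_2)\subseteq\sigma(\mathcal L_1)\cdot\sigma(\mathcal L_2)$. Concretely, one takes a maximal commutative subalgebra $\mathfrak A$ of $\mathcal B(\mathcal B(\mathcal H))$ containing $\mathcal L_1,\mathcal L_2$. Spectra relative to $\mathfrak A$ coincide with the full spectra, and every character $\chi$ satisfies $\chi(\mathcal L_1\mathcal L_2)=\chi(\mathcal L_1)\chi(\mathcal L_2)$. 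Combining the two steps, $\sigma(\mathcal L)\subseteq\overline{\sigma(\mathcal U)}\cdot\sigma(\mathcal U)$, so the hypothesis gives $1\in\rho(\mathcal L)$. This character argument is the main obstacle; the rest is bookkeeping. As an alternative, I would use the Rosenblum-type integral representation $(\mathcal I-\mathcal L)^{-1}X=\frac{1}{2\pi i}\oint_\Gamma(\mathcal U^\star-\lambda)^{-1}X(\lambda\mathcal U-1)^{-1}\dots\,\dd\lambda$, with $\Gamma$ separating $\overline{\sigma(\mathcal U)}$ from $1/\sigma(\mathcal U)$. The separation is possible because both sets are compact, the second avoids $0$ or lies outside a disc, and they are disjoint by hypothesis.

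Finally, I would check self-adjointness. $Q$ is self-adjoint since $\mathcal W(\xi)$ is. Moreover $(\mathcal L X)^\star=\mathcal L(X^\star)$, so $X^\star$ solves the same equation as $X$, and uniqueness gives $X=X^\star$. This yields the unique self-adjoint solution $\mathcal P(0)\in\mathcal B(\mathcal H)$.
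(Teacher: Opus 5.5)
Your proof is correct and follows essentially the Lumer--Rosenblum argument that the paper cites without proving. Left multiplication by $\mathcal U^\star$ and right multiplication by $\mathcal U$ commute on $\mathcal B(\mathcal H)$, so Gelfand theory in a maximal commutative subalgebra places the spectrum of $\mathcal L X=\mathcal U^\star X\mathcal U$ inside $\overline{\sigma(\mathcal U)}\cdot\sigma(\mathcal U)$, and self-adjointness then follows from uniqueness via $(\mathcal L X)^\star=\mathcal L(X^\star)$. One small tidy-up: $S^\star(\xi,0)$ is in general only weakly continuous in $\xi$, so it is cleaner to define $Q$ through the bounded sesquilinear form $(y,z)\mapsto\int_0^T\inner{S(\xi,0)y}{\mathcal W(\xi)S(\xi,0)z}\dd\xi$.
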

Lemma~\ref{thm_eq_Lyapunov_operator} is a consequence of a general abstract result presented in \cite{LumRos1959}.

\begin{remark}
\label{remark_Lyap_cond}
Lemma~\ref{thm_eq_Lyapunov_operator} implies that if $1\notin \sigma(\mathcal U)\cdot \overline{\sigma(\mathcal U)}$ then the only solution $\mathcal P(0)\in\mathcal B(\mathcal H)$ of the homogeneous operator equation
$$
\mathcal P(0) - \mathcal U^\star \mathcal P(0) \mathcal U = 0
$$
is the trivial one: $\mathcal P(0)z=0$ $\,\forall\, z\in\mathcal H.$
\end{remark}
Denote
$$\mathscr{P} = \{\mathcal W\in \mathcal B(\mathcal H)\;\,|\;\, \mathcal W=\mathcal W^\star,\;\,\exists\,\mu>0:\;\inner{z}{\mathcal W z}\>\mu\|z\|^2 \;\;\forall\, z\in \mathcal H \}.$$
We finally present the following result which is a consequence of Corollary~2 in \cite{Datko1972} and Theorem~\ref{thm_Hilbert}.
\begin{theorem}
\label{thm_Hilbert_stability}
Let $1\notin \sigma(\mathcal U)\cdot \overline{\sigma(\mathcal U)}$ and $\inner{z}{\mathcal W(t) z}$ with $\mathcal W(t)\in \mathscr{P}$ $\,\forall\, t\>0$ be a quadratic $T$-periodic functional. Then,  a $T$-periodic evolution family $S(t,t_0)$ is uniformly exponentially stable, if and only if a unique solution $\mathcal P(0)$ to the operator equation~\textup{(\ref{discr_Lyap})} is nonnegative, i.e. $\inner{z}{\mathcal P(0)z}\>0$ for any $z\in\mathcal H.$
\end{theorem}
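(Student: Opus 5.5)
We prove the two implications separately. By Lemma~\ref{thm_eq_Lyapunov_operator}, the hypothesis $1\notin\sigma(\mathcal U)\cdot\overline{\sigma(\mathcal U)}$ makes the solution $\mathcal P(0)$ of (\ref{discr_Lyap}) exist and be unique. Theorem~\ref{thm_Hilbert} then gives the periodic functional $v(t,z)=\inner{z}{\mathcal P(t)z}$, with $\mathcal P(t)$ given by (\ref{P_operator_formula}), and this functional satisfies (\ref{func_derivative}). The proof compares this $\mathcal P(0)$ with the series solution, which is available only in the stable case.

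\emph{Necessity.} Assume $\|S(t,t_0)\|\<Me^{\omega(t-t_0)}$ with $\omega<0$. Then $\|\mathcal U^k\|\<Me^{\omega kT}$, so the series
$$\mathcal Q=\sum_{k\>0}(\mathcal U^\star)^k W_T\,\mathcal U^k,\qquad W_T=\int_0^T S^\star(\xi,0)\mathcal W(\xi)S(\xi,0)\dd\xi,$$
converges in norm. This $\mathcal Q$ is self-adjoint and solves (\ref{discr_Lyap}); to check this, telescope the partial sums. By uniqueness, $\mathcal P(0)=\mathcal Q$. Each summand satisfies $\inner{z}{(\mathcal U^\star)^kW_T\mathcal U^kz}=\int_0^T\inner{S(\xi,0)\mathcal U^kz}{\mathcal W(\xi)S(\xi,0)\mathcal U^kz}\dd\xi\>0$, so $\mathcal P(0)$ is nonnegative.

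\emph{Sufficiency.} Assume $\mathcal P(0)\>0$. Iterating (\ref{discr_Lyap}) gives, for every $N$,
$$\inner{z}{\mathcal P(0)z}=\inner{\mathcal U^Nz}{\mathcal P(0)\mathcal U^Nz}+\int_0^{NT}\inner{S(\xi,0)z}{\mathcal W(\xi)S(\xi,0)z}\dd\xi .$$
Nonnegativity of $\mathcal P(0)$ and $\mathcal W(\xi)\>\mu I$ then yield
$$\mu\int_0^{\infty}\|S(\xi,0)z\|^2\dd\xi\<\inner{z}{\mathcal P(0)z}\<\|\mathcal P(0)\|\,\|z\|^2 .$$
The same bound is needed from an arbitrary starting time $t_0$, uniformly in $t_0$. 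By periodicity it suffices to take $t_0\in[0,T]$. Write $S(\xi,t_0)=S(\xi,T)S(T,t_0)$ and use $S(\xi,T)=S(\xi-T,0)$ together with the exponential bound $\|S(T,t_0)\|\<Me^{\omega T}$. This gives
$$\int_{t_0}^\infty\|S(\xi,t_0)z\|^2\dd\xi\<C\|z\|^2\quad\text{for all } t_0\>0.$$
Datko's theorem for evolution families (Corollary~2 in \cite{Datko1972}) then gives uniform exponential stability. A direct route is also available: the bound gives $\sum_k\|\mathcal U^kz\|^2<\infty$ for every $z$, hence $r(\mathcal U)<1$ by the discrete Datko--Pazy argument, and the local exponential bound then transfers the decay from the integer multiples of $T$ to all times.

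\emph{Main obstacle.} The step most likely to need care is the sufficiency direction. The uniform $L^2$ bound only implies exponential stability through Datko's result, and that result needs strong continuity, which we have. The uniqueness hypothesis is used only to identify $\mathcal P(0)$ with $\mathcal Q$. If a self-contained argument is preferred over citing \cite{Datko1972}, the discrete argument above is the one to carry out: show $\sup_N\|\mathcal U^N\|<\infty$ by uniform boundedness, and then use $N\|\mathcal U^Nz\|^2\<C'\sum_{k\<N}\|\mathcal U^kz\|^2$ to get $\|\mathcal U^N\|\<1/2$ for some $N$.
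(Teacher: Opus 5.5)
Your proposal is correct. The necessity half is essentially the paper's argument made self-contained. The paper goes from uniform exponential stability to $\sigma(\mathcal U)$ inside the unit disc via \cite{DK}, and then quotes Proposition~5 of \cite{Przyluski1980} for the nonnegative series solution. You instead bound $\|\mathcal U^k\|$ directly, sum the series, and identify it with $\mathcal P(0)$ by uniqueness.

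The sufficiency half genuinely differs. The paper builds the continuous-time functional $v(t,z)=\inner{z}{\mathcal P(t)z}$ from Theorem~\ref{thm_Hilbert}. It notes that $\mathcal P(t)$ is bounded, periodic and nonnegative, the last because (\ref{P_operator_formula}) writes $\mathcal P(t)$ as a sum of nonnegative terms once $\mathcal P(0)\>0$. It then invokes Corollary~2 of \cite{Datko1972}. Uniformity in the initial time is automatic there, because $v$ is available at every $t_0$.

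You never use $\mathcal P(t)$ for $t\in(0,T)$. You telescope (\ref{discr_Lyap}) along $\mathcal U^kz$ to get an $L^2$ bound from $t_0=0$ only. Then you have two options:
\begin{itemize}
\item restore uniformity in $t_0$ by hand (one period of exponential growth plus periodicity) and apply Datko's integral criterion; or
\item pass to $\sum_k\|\mathcal U^kz\|^2\<C\|z\|^2$ and run the discrete Datko--Pazy argument, which gives $r(\mathcal U)<1$ directly.
\end{itemize}

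The paper's route is shorter and illustrates the link between (\ref{discr_Lyap}) and the periodic functional, which is its purpose. Yours needs only the discrete equation, is fully self-contained in its discrete form, and produces the spectral conclusion along the way.

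One caveat, which the paper also leaves implicit: you use a $t$-independent $\mu$ with $\mathcal W(\xi)\>\mu I$, while $\mathcal W(t)\in\mathscr P$ literally gives only some $\mu(t)>0$ for each $t$. Your discrete route survives this, since it needs such a bound only on some subinterval of $[0,T]$. The function $\mu(t)$ is upper semicontinuous, being an infimum of the continuous functions $t\mapsto\inner{z}{\mathcal W(t)z}$, so Baire's theorem supplies such a subinterval.
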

\begin{proof}
The fact that $S(t,t_0)$ is uniformly exponentially stable implies that $\sigma(\mathcal U)$ lies inside the unit circle \cite{DK}, and the result follows by Proposition~5 in \cite{Przyluski1980}. Conversely, let a nonnegative $\mathcal P(0)$ be a solution to equation~\textup{(\ref{discr_Lyap})}. By Theorem~\ref{thm_Hilbert}, the functional $v(t,z)=\inner{z}{\mathcal P(t)z}$ with $\mathcal P(t)$ defined
by~(\ref{P_operator_formula}) solves Problem~1. Moreover, $\mathcal P(t)$ is periodic and bounded by construction, hence there exists a constant $k>0$ such that $|v(t,z)|\<k\|z\|^2$ for all $t\>0,$  $z\in\mathcal H.$ In addition, $v(t,z)\>0$ for all $t\>0,$ $z\in\mathcal H.$ 
The uniform exponential stability follows by Corollary~2 in \cite{Datko1972}.
\end{proof}

\begin{remark}
Theorem~\ref{thm_Hilbert} is consistent with the known theory for the exponential stability case.
Indeed, if $S(t,t_0)$ is exponentially stable then $\sigma(\mathcal U)$ lies inside the unit circle \cite{DK}. In this case, a unique solution to equation~(\ref{discr_Lyap}) is given by \cite{Przyluski1980}
$$
\mathcal P(0)=\Sum_{k=0}^{+\infty} \mathcal U^{\star k} \int_0^T S^\star(\xi,0)\mathcal W(\xi) S(\xi,0)\dd \xi\, \mathcal U^k.
$$
Since the periodicity implies $\,\mathcal U^k = S(kT,0)$ and  $S(\xi,0)\mathcal U^k = S(\xi+kT,kT)S(kT,0) = S(\xi+kT,0),$ we conclude that
\begin{align*}
 \mathcal P(0)  &= \Sum_{k=0}^{+\infty} \int_0^T S^\star(\xi+kT,0)\mathcal W(\xi) S(\xi+kT,0)\dd \xi \\ &= \Sum_{k=0}^{+\infty} \int_{kT}^{(k+1)T} S^\star(\xi,0)\mathcal W(\xi-kT) S(\xi,0)\dd \xi= \int_0^{+\infty}S^\star(\xi,0)\mathcal W(\xi) S(\xi,0)\dd \xi.
\end{align*}
By formula~(\ref{formula_P_kt}) we get for $t\in[kT,(k+1)T],$ $k\in\mathbb{Z}:$
\begin{align*}
\mathcal P(t) &=  S^\star((k+1)T,t)\mathcal P(0) S((k+1)T,t) + \int_{t}^{(k+1)T} S^\star (\xi,t) \mathcal W(\xi) S(\xi,t)\dd \xi \\
&= \int_0^{+\infty}S^\star(\xi+(k+1)T,t)\mathcal W(\xi) S(\xi+(k+1)T,t)\dd \xi \\ &+ \int_{t}^{(k+1)T} S^\star (\xi,t) \mathcal W(\xi) S(\xi,t)\dd \xi
=\int_{t}^{+\infty} S^\star (\xi,t) \mathcal W(\xi) S(\xi,t)\dd \xi.
\end{align*}
We arrive at the conclusion that
$$
v(t,z_0) = \langle z_0, \mathcal P(t) z_0 \rangle = \int_t^{+\infty} \langle S (\xi,t) z_0, \mathcal W (\xi) S(\xi,t) z_0 \rangle \dd \xi\quad \forall\,\, t\>0,\; \forall\,\,z_0\in\mathcal H,
$$
which is consistent with \cite{Datko1972} and \cite{LetZhab2009} and could be obtained directly by the integration of (\ref{func_derivative}) from $t_0$ to infinity, taking into account the exponential stability.
\end{remark}

\section{Application of the Hilbert setting to system~(\ref{sys_delay_periodic})}
\label{sec:main}
This section is devoted to an explicit calculation of the operator $\mathcal P(t),$ $t\>0$ given by Theorem~\ref{thm_Hilbert} for a particular case of a periodic time delay system~(\ref{sys_delay_periodic}). We show that, in fact, Theorem~\ref{thm_Hilbert} allows us to recover the Lyapunov functional given by formula~(\ref{func_v_0_old_old}) without a preliminary exponential stability assumption, and discuss consequences delivered by an infinite-dimensional interpretation of the theory.

\subsection{Delay system as an evolution family and Cauchy formula}
Consider a Hilbert space $\mathcal H = \mathbb C^n\times L^2([-h,0],\mathbb C^n)$ with the scalar product
\begin{align*}
\langle\ph,\psi \rangle &= \ph_0^\star \psi_0 + \int_{-h}^0 \Phi^\star(\theta)\Psi(\theta)\dd\theta,\quad \ph,\psi \in\mathcal H,
\end{align*}
where $\ph=\bigl(\ph_0,\Phi(\cdot)\bigr),$ $\psi=\bigl(\psi_0,\Psi(\cdot)\bigr).$
It follows from the existence and uniqueness of solutions to the initial value problem~(\ref{sys_delay_periodic})--(\ref{sys_IVP}) that the associated evolution family
can be defined explicitly as \cite{Breda2010}
\begin{equation}
\label{evolution_family}
S(t,t_0)\ph \eqd \mathrm{x}_t(t_0,\ph) =\bigl( x(t,t_0,\ph),\, x_t(t_0,\ph)\bigr), \quad t\>t_0.
\end{equation}
It is also shown in \cite{Breda2010} that system~(\ref{sys_delay_periodic}) can be written as a well-posed abstract differential equation $\mathrm{x}_t'=\mathcal A(t) \mathrm{x}_t,$ $t\>t_0,$ where the domain of $\mathcal A(t)$ is independent of $t.$ We emphasize however that the evolution family (\ref{evolution_family}) is defined for an arbitrary $\ph\in\mathcal H,$ therefore in this paper we work directly with the evolution family~(\ref{evolution_family}) rather than with an abstract differential equation.
Moreover, the mapping $S(t,t_0)$ can be explicitly represented via 
the fundamental matrix of system~(\ref{sys_delay_periodic}).

\begin{definition} \textup{\cite{Zverkin,HaleVerduyn}}
The matrix $K(t,s)$ is called the fundamental matrix of system~\textup{(\ref{sys_delay_periodic})}, if it satisfies
\begin{align}
\label{fund_matr_t}
\dfrac{\partial K(t,s)}{\partial t} &= A_0(t)K(t,s) + A_1(t)K(t-h,s),\quad t>s, \\
K(s,s)&=I,\quad K(t,s)=\mathbb{O},\quad t<s.\notag
\end{align}
\end{definition}
\begin{remark} \cite{Zverkin,HaleVerduyn}
    It is known that the fundamental matrix satisfies also the equation
    \begin{align}
    \label{fund_matr_s}
\dfrac{\partial K(t,s)}{\partial s} &= -K(t,s)A_0(s)-K(t,s+h) A_1(s+h),\quad t>s,
\end{align}
and is periodic, that is, $K(t+T,s+T)\equiv K(t,s).$
\end{remark}
\begin{lemma} \textup{\cite{Zverkin,HaleVerduyn}} \textup{(\textbf{Cauchy formula})}
    The solution of system~\textup{(\ref{sys_delay_periodic})} with an initial condition $\ph=\bigl(\ph_0,\Phi(\cdot)\bigr)\in\mathcal H$ can be expressed as follows:
\begin{align}
\label{formula_Cauchy}
x(t,t_0,\ph)&= K(t,t_0)\ph_0 + \int_{-h}^0 K(t,t_0+h+\tau)A_1(t_0+h+\tau)\Phi(\tau)\dd\tau,\quad t\>t_0.
\end{align}
\end{lemma}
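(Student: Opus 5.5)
The plan is to verify directly that the right-hand side of (\ref{formula_Cauchy}), call it $y(t)$, together with $y(t_0+\xi)=\Phi(\xi)$ for $\xi\in[-h,0)$, satisfies the initial value problem (\ref{sys_delay_periodic})--(\ref{sys_IVP}). By the existence and uniqueness of solutions \cite{Breda2010}, $y$ then coincides with $x(\cdot,t_0,\ph)$. At $t=t_0$ we have $K(t_0,t_0)=I$, and the integrand vanishes because $K(t_0,t_0+h+\tau)=\mathbb{O}$ whenever $t_0+h+\tau>t_0$, that is, for a.e. $\tau\in(-h,0]$. Hence $y(t_0)=\ph_0$.

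Next I differentiate $y$ for $t>t_0$. The first term gives $A_0(t)K(t,t_0)\ph_0+A_1(t)K(t-h,t_0)\ph_0$ by (\ref{fund_matr_t}). For the integral term, $K(t,s)$ jumps from $\mathbb{O}$ to $I$ at $s=t$, so I write the integral over $\tau$ with $t_0+h+\tau\<t$, namely over $\tau\in[-h,\min(0,t-t_0-h)]$. Differentiating in $t$ yields two contributions. The first is the boundary term, present only when $t-t_0-h\in(-h,0)$, and equal to $A_1(t)\Phi(t-t_0-h)=A_1(t)y(t-h)$. The second is the interior term
\[
\int A_0(t)K(t,\cdot)A_1\Phi+\int A_1(t)K(t-h,\cdot)A_1\Phi,
\]
where (\ref{fund_matr_t}) is used inside the integral. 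This step needs justification of differentiation under the integral sign. I would handle it by writing $K(t,s)=I+\int_s^t\partial_\sigma K(\sigma,s)\,\dd\sigma$ for $t\>s$ and applying Fubini, which avoids pointwise differentiability issues since $\Phi$ is only $L^2$. Collecting terms gives $A_0(t)y(t)+A_1(t)\bigl[K(t-h,t_0)\ph_0+\int K(t-h,t_0+h+\tau)A_1\Phi\,\dd\tau\bigr]$, plus the boundary term.

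The main obstacle is matching the delayed term with $y(t-h)$ in the two regimes. When $t-h\>t_0$, the bracket is exactly $y(t-h)$ by definition, and there is no boundary term. When $t-h<t_0$, the bracket vanishes: $K(t-h,t_0)=\mathbb{O}$, and $K(t-h,t_0+h+\tau)=\mathbb{O}$ since $t-h<t_0+h+\tau$ on the remaining range. In that case the boundary term $A_1(t)\Phi(t-t_0-h)$ supplies $A_1(t)y(t-h)$. The equation therefore holds for a.e. $t\>t_0$. The resulting function is absolutely continuous on $[t_0,\infty)$ with an $L^2_{\mathrm{loc}}$ derivative, so it lies in $H^1$, and uniqueness concludes the proof. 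Alternatively, the Fubini formulation can be organized as showing that $y(t)-\ph_0-\int_{t_0}^t\bigl(A_0y+A_1y(\cdot-h)\bigr)\dd s=0$, which is cleaner given the merely a.e. regularity.
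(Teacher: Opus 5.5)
Your proof is correct. It takes a different route from the one behind this lemma: the paper gives no proof of its own and simply quotes the formula from \cite{Zverkin,HaleVerduyn}.

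\textbf{Your route.} You verify that the candidate $y$ solves (\ref{sys_delay_periodic})--(\ref{sys_IVP}) using only the defining equation (\ref{fund_matr_t}) of $K$, and then you invoke uniqueness. Your handling of the jump of $K(t,s)$ at $s=t$ is right in both regimes. For $t<t_0+h$, the boundary term $A_1(t)\Phi(t-t_0-h)$ supplies $A_1(t)y(t-h)$ and the bracket vanishes. For $t>t_0+h$, there is no boundary term and the bracket equals $y(t-h)$.

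\textbf{The classical route.} This is the argument the paper itself runs in Lemma~\ref{lemma_aux_fund} with $x(\theta)=K(\theta,s)$. Take an arbitrary solution $x$ and differentiate $\theta\mapsto K(t,\theta)x(\theta)$ on $(t_0,t)$ with the adjoint equation (\ref{fund_matr_s}). This gives $K(t,\theta)A_1(\theta)x(\theta-h)-K(t,\theta+h)A_1(\theta+h)x(\theta)$. Integrate from $t_0$ to $t$. The two integrals cancel on their overlap, and $K(t,\theta+h)=\mathbb{O}$ for $\theta>t-h$, so only $\int_{t_0-h}^{t_0}K(t,\sigma+h)A_1(\sigma+h)x(\sigma)\,\dd\sigma$ survives. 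That is exactly (\ref{formula_Cauchy}).

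\textbf{What each buys.} The derivation shows that every solution has this form, so uniqueness comes for free. However, it rests on (\ref{fund_matr_s}), which is a genuinely nontrivial fact for nonautonomous systems and which the paper only quotes. Your verification needs only (\ref{fund_matr_t}) and shows directly that the formula defines a solution. The price is appealing to uniqueness from \cite{Breda2010}, which is in any case elementary by the method of steps and Gronwall.

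\textbf{Two small corrections.} First, make the integrated identity $y(t)=\varphi_0+\int_{t_0}^t\bigl(A_0(\sigma)y(\sigma)+A_1(\sigma)y(\sigma-h)\bigr)\dd\sigma$, obtained via Fubini, your main argument rather than an alternative. A.e. pointwise differentiability by itself does not give absolute continuity, and the integrated identity does. Second, what you obtain is $y\in H^1_{\mathrm{loc}}([t_0,\infty))$. On $[t_0-h,t_0)$ the function is only $\Phi\in L^2$, so uniqueness should be understood in the class of functions that are absolutely continuous on $[t_0,\infty)$.
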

In fact, the Cauchy formula can be viewed as a definition of the evolution family $S(t,t_0).$ As before, \textit{the monodromy operator} is defined as $\mathcal U:\mathcal H\to \mathcal H,$
\begin{align}
\label{operator_monodromy}
\mathcal U\ph &= S(T,0)\ph = \bigl(
    x(T,0,\ph),\,
    x_T(0,\ph)
\bigr).
\end{align}
Since we assume that $T\>h,$ we can compute the function $x_T(0,\ph)$ explicitly using the Cauchy formula:
\begin{align}
\label{operator_monodromy_2}
x(T+\theta,0,\ph)&= K(T+\theta,0)\ph_0 + \int_{-h}^0 K(T+\theta,h+\tau)A_1(h+\tau)\Phi(\tau)\dd\tau,
\end{align}
where $\theta\in[-h,0].$
The condition $T\>h$ implies that the monodromy operator $\mathcal U$ is compact on $\mathcal{H}$ \cite{Zverkin,HaleVerduyn}, consequently, the nonzero part of the spectrum consists entirely of eigenvalues. The values $\mu\in\sigma(\mathcal U)\setminus\{0\}$ are called \textit{Floquet multipliers} of system~\textup{(\ref{sys_delay_periodic})}.

Next, we present a useful property of the fundamental matrix which is similar to the integral properties of the delay Lyapunov matrix presented in~\cite{Gomez2016}.
\begin{lemma}
\label{lemma_aux_fund}
For any $t\>\xi\>s,$ the fundamental matrix $K(t,s)$ satisfies
\begin{align*}
K(t,s) = K(t,\xi)K(\xi,s) + \int_{-h}^0 K(t,\xi+\theta+h)A_1(\xi+\theta+h)K(\xi+\theta,s)\dd\theta.
\end{align*}
\end{lemma}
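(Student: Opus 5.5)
The natural route is the semigroup (evolution) property of solutions combined with the Cauchy formula~(\ref{formula_Cauchy}). Fix $s$ and a vector $\ph_0\in\C^n$. Consider the solution $x(t)=K(t,s)\ph_0$, which by the Cauchy formula is the solution of system~(\ref{sys_delay_periodic}) starting at time $s$ with initial data $(\ph_0,0)$. This holds because $K(t,s)=\mathbb O$ for $t<s$, so the integral term vanishes when $\Phi\equiv 0$. In other words, $x(t)=x(t,s,(\ph_0,0))$ for $t\>s-h$, with $x(t)=0$ on $[s-h,s)$.

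Next, restart this solution at the intermediate time $\xi\in[s,t]$. By uniqueness of solutions to the initial value problem~(\ref{sys_delay_periodic})--(\ref{sys_IVP}), equivalently the property $S(t,\xi)S(\xi,s)=S(t,s)$ of the evolution family~(\ref{evolution_family}), we have $x(t,s,(\ph_0,0))=x(t,\xi,\psi)$. Here $\psi=(\psi_0,\Psi)$ with $\psi_0=x(\xi)=K(\xi,s)\ph_0$ and $\Psi(\theta)=x(\xi+\theta)=K(\xi+\theta,s)\ph_0$ for $\theta\in[-h,0]$. The identity $K(\xi+\theta,s)=\mathbb O$ for $\xi+\theta<s$ makes this consistent with the zero history, so no case split on $\xi-s\lessgtr h$ is needed.

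Applying the Cauchy formula~(\ref{formula_Cauchy}) with initial time $t_0=\xi$ gives
\begin{align*}
K(t,s)\ph_0 = K(t,\xi)K(\xi,s)\ph_0+\int_{-h}^0 K(t,\xi+h+\theta)A_1(\xi+h+\theta)K(\xi+\theta,s)\ph_0\,\dd\theta .
\end{align*}
Since $\ph_0$ is arbitrary, the matrix identity follows.

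The only delicate point is justifying that the restarted data are admissible and the restart is legitimate. The history $\Psi$ lies in $L^2$, since $K(\cdot,s)$ is bounded on compacts and piecewise continuous with a jump at $s$. Uniqueness in $H^1$ holds on $[\xi-h,\infty)$. If one prefers to avoid invoking uniqueness, an alternative is a direct check: fix $\xi$ and $s$, and show that the right-hand side, as a function of $t$, satisfies~(\ref{fund_matr_t}) for $t>\xi$ and agrees with $K(t,s)$ on $[\xi-h,\xi]$. The latter follows from $K(t,\xi)=\mathbb O$ for $t<\xi$ together with $K(\xi,\xi)=I$; in the integral only the term with $\xi+h+\theta\< t$ survives, which reproduces $K(t,s)$ there. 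Then uniqueness for the delay ODE concludes the argument. I expect the uniqueness and restart step to be the main (though mild) obstacle, and the first approach via the evolution-family property suffices.
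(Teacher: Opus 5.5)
Your main argument is correct, but it is not the paper's proof. It is the route the paper only points to in the remark after the lemma, namely that the identity is a consequence of $S(t,\xi)S(\xi,s)=S(t,s)$. You realize $K(\cdot,s)\ph_0$ as the solution with data $(\ph_0,0)$ at time $s$, restart it at $\xi$ by uniqueness, and read the identity off the Cauchy formula~(\ref{formula_Cauchy}) with initial time $\xi$. You also handle correctly the compatibility of the restarted history with $K(\xi+\theta,s)=\mathbb{O}$ for $\xi+\theta<s$.

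The paper instead argues at the level of the kernel alone. It differentiates $\theta\mapsto K(t,\theta)K(\theta,s)$, using the backward equation~(\ref{fund_matr_s}) for the left factor and the forward equation~(\ref{fund_matr_t}) for the right one, so the $A_0$-terms cancel. It then integrates over $[s,\xi]$, shifts one of the two remaining $A_1$-integrals by $h$, and uses $K(\theta,s)=\mathbb{O}$ for $\theta<s$ to merge them into one integral over $[\xi-h,\xi]$. That computation needs no solution theory on $\mathcal H$ and holds verbatim for all real $t\ge\xi\ge s$. This matters because the lemma is later used with a negative last argument, in the proofs of Lemmas~\ref{lemma_int_eq_periodicity} and~\ref{lemma_connection_U_G}. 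Your argument dispenses with the backward equation and shows that the identity is just the kernel form of the semigroup law. The price is that it relies on the Cauchy formula and on uniqueness for the initial value problem at arbitrary real initial times. The evolution family of Definition~\ref{def_evolution_family} is only indexed by $t_0\ge 0$, so either invoke uniqueness directly or reduce to that case via $K(t+T,s+T)=K(t,s)$.

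One caution about your optional ``direct check'': as written it is wrong. For $t\in[\xi-h,\xi)$ the right-hand side vanishes identically. Indeed $K(t,\xi)=\mathbb{O}$, and $\xi+h+\theta\ge\xi>t$ for every $\theta\in[-h,0]$, so no part of the integral survives. Yet $K(t,s)$ is in general nonzero on $[\max\{s,\xi-h\},\xi)$ when $s<\xi$; it tends to $I$ as $t\to s^+$.

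To make that route work, compare $K(\cdot,s)$ with the function equal to $K(t,s)$ for $t\in[\xi-h,\xi]$ and to the right-hand side for $t\ge\xi$. When differentiating the integral in $t$ on $(\xi,\xi+h)$, you must keep the boundary term $A_1(t)K(t-h,s)$ coming from the jump of $\sigma\mapsto K(t,\sigma)$ at $\sigma=t$; this term supplies the delayed contribution. That variant also still ends with a uniqueness argument, so it does not actually avoid uniqueness. Since your main proof does not use it, correctness is unaffected.
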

\begin{proof}
Let us take $\theta\in(s,t)$ and consider the expression
\begin{align*}
\dfrac{\partial}{\partial\theta}(K(t,\theta)K(\theta,s)) &= -(K(t,\theta)A_0(\theta)+K(t,\theta+h) A_1(\theta+h))K(\theta,s)\\
&+K(t,\theta)(A_0(\theta)K(\theta,s) + A_1(\theta)K(\theta-h,s))\\
&= -K(t,\theta+h) A_1(\theta+h)K(\theta,s) + K(t,\theta)A_1(\theta)K(\theta-h,s).
\end{align*}
Now, we integrate both sides of the above equality from $s$ to $\xi\in(s,t):$
\begin{align*}
&K(t,\xi)K(\xi,s) - K(t,s) \\ &= -\int_{s}^\xi K(t,\theta+h) A_1(\theta+h)K(\theta,s)\dd\theta + \int_{s}^\xi K(t,\theta)A_1(\theta)K(\theta-h,s)\dd\theta\\
&=-\int_{s}^\xi K(t,\theta+h) A_1(\theta+h)K(\theta,s)\dd\theta + \int_{s-h}^{\xi-h} K(t,\theta+h)A_1(\theta+h)K(\theta,s)\dd\theta.
\end{align*}
Since $K(\theta,s)=\mathbb{O}$ for $\theta<s,$ we can extend the lower limit of the first integral to $s-h,$ hence we get
\begin{align*}
K(t,s) = K(t,\xi)K(\xi,s) + \int_{\xi-h}^\xi K(t,\theta+h) A_1(\theta+h)K(\theta,s)\dd\theta,\quad t\>\xi\>s,
\end{align*}
which coincides with the desired relation.
\end{proof}
\begin{remark} In fact, Lemma~\ref{lemma_aux_fund} is a consequence of $S(t,\xi)S(\xi,s)=S(t,s)$ for any $t\>\xi\>s.$
\end{remark}

\subsection{Derivation of $\mathcal P(0)$}
\label{sec_P_0}
The aim of this section is to solve explicitly the discrete Lyapunov equation~(\ref{discr_Lyap}),
\begin{align}
\label{discr_Lyap_ph}
&\langle \ph, \mathcal P(0) \ph \rangle - \langle\mathcal U \ph, \mathcal P(0)\mathcal U \ph \rangle = \int_0^T \inner{S(\xi,0)\ph}{\mathcal W(\xi) S(\xi,0)\ph}\dd \xi,\quad \ph\in\mathcal H,
\end{align}
for a particular case of system~(\ref{sys_delay_periodic}), by exploiting the structure of the mappings $\mathcal U$ and $S(\xi,0)$ given by the Cauchy formula. Motivated by the structure~(\ref{derivative_v_0}) of the derivative of the functional $v_0$ presented in the introduction, in the subsequent development we consider the operator $\mathcal W\!:\, \R_+\to\mathcal B(\mathcal H)$ of the form
\begin{equation}
\label{formula_W}
\mathcal W(t)\ph = \begin{pmatrix}
        W(t) \ph_0 \\ 0
    \end{pmatrix},
\end{equation}
where $W(t)=W^T(t)$ is a real continuous $T$-periodic matrix.
\begin{remark}
    \label{remark_W}
    The operator $\mathcal W$ given by (\ref{formula_W}) is obviously not in $\mathscr{P},$ and hence it does not satisfy the conditions of Theorem~\ref{thm_Hilbert_stability}. However, we will show that considering~(\ref{formula_W}) is a necessary first step to connect the delay Lyapunov matrix framework with the infinite-dimensional setting of Section~\ref{sec_general_Hilbert_setting} which suffices for the aims of this paper. Unfortunately, the framework of complete type functionals \cite{Khar_book, LetZhab2009} is not well suited to a Hilbert space setting due to dependence of the functional derivative on $x(t-h).$ Therefore, a modification of $\mathcal W$ is required in order to construct a functional satisfying the conditions of Theorem~\ref{thm_Hilbert_stability}.
\end{remark}

We start substituting the Cauchy formula~(\ref{formula_Cauchy}) in the right-hand side of equation~(\ref{discr_Lyap_ph}):
\begin{align*}
\int_0^T &\inner{S(\xi,0)\ph}{\mathcal W(\xi) S(\xi,0)\ph}\dd \xi = \int_0^T x^\star(\xi,0,\ph)W(\xi) x(\xi,0,\ph)\dd \xi \\
&= \int_0^T\left[K(\xi,0)\ph_0 + \int_{-h}^0 K(\xi,h+\tau)A_1(h+\tau)\Phi(\tau)\dd\tau\right]^\star W(\xi) \\ &\times\left[K(\xi,0)\ph_0 + \int_{-h}^0 K(\xi,h+\tau)A_1(h+\tau)\Phi(\tau)\dd\tau\right]\dd \xi.
\end{align*}
Expanding the brackets, we get the following representation of the right-hand side of the equation:
\begin{align}
\label{struct_Lyap_eq}
\int_0^T &\inner{S(\xi,0)\ph}{\mathcal W(\xi) S(\xi,0)\ph}\dd \xi =\ph_0^\star Y_0(-h,-h) \ph_0\\ &+2\Re\left(\ph_0^\star \int_{-h}^0 Y_0(-h,\tau)A_1(h+\tau)\Phi(\tau)\dd\tau\right) \notag\\
&+\int_{-h}^0\int_{-h}^0 \Phi^\star(\tau_1)A_1^T(h+\tau_1) Y_0 (\tau_1,\tau_2) A_1(h+\tau_2)\Phi(\tau_2)\dd\tau_2 \dd\tau_1\notag
\end{align}
with
\begin{align*}
Y_0 (\tau_1,\tau_2) &= \int_0^T K^T(\xi,h+\tau_1) W(\xi) K(\xi,h+\tau_2)\dd \xi,\quad \tau_1,\tau_2\in[-h,0].
\end{align*}
The form~(\ref{struct_Lyap_eq}) of the right-hand side of equation~(\ref{discr_Lyap_ph}) naturally suggests us seeking for its solution $\mathcal P(0):\,\mathcal H\to \mathcal H\,$ with the following structure:
\begin{align}
\notag
 \langle\ph, \mathcal P(0)\ph \rangle &= \ph_0^\star M_0\ph_0 + 2\Re \left(\ph_0^\star\int_{-h}^0 Q_0(\theta)A_1(h+\theta)\Phi(\theta)\dd \theta \right) \\ &+ \int_{-h}^0\int_{-h}^0\Phi^\star(\theta)A_1^T(h+\theta)R_0(\theta,s)A_1(h+s)\Phi(s)\dd s\dd \theta,\label{structure_P_0}
\end{align}
where
\begin{equation}
\label{cond_P_0_self}
M_0=M_0^T,\quad R_0(\theta,s) = R_0^T(s,\theta).
\end{equation}
The conditions~(\ref{cond_P_0_self}) come from the self-adjointness of $\mathcal P(0).$ We emphasize that the structure of the functional is suggested by the Cauchy formula in the same way as in the previous theory \cite{LetZhab2009, ZhabLet2009IFAC} but without assuming the exponential stability.

Based on the structure~(\ref{structure_P_0}), we now compute the term $\langle\mathcal U\ph, \mathcal P(0)\mathcal U\ph \rangle$ using the expressions~(\ref{operator_monodromy}) and (\ref{operator_monodromy_2}) for the monodromy operator. After expanding the brackets we get the following representation:
\begin{align*}
\langle\mathcal U\ph, \mathcal P(0)\mathcal U\ph \rangle &=\ph_0^\star M_1\ph_0 + 2\Re \left(\ph_0^\star\int_{-h}^0 Q_1(\theta)A_1(h+\theta)\Phi(\theta)\dd \theta \right) \\ &+ \int_{-h}^0\int_{-h}^0\Phi^\star(\theta)A_1^T(h+\theta)R_1(\theta,s)A_1(h+s)\Phi(s)\dd s\dd \theta,
\end{align*}
where, with the notation
\begin{align*}
\Psi(\theta,s; M_0,Q_0,R_0)&= K^T(T,h+\theta) M_0 K(T,h+s)\\ &+ K^T(T,h+\theta)\int_{-h}^0 Q_0(\xi)A_1(h+\xi)K(T+\xi,h+s)\dd\xi\\
&+ \int_{-h}^0 K^T(T+\xi,h+\theta)A_1^T(h+\xi)Q_0^T(\xi)\dd\xi K(T,h+s)\\
&+ \int_{-h}^0\int_{-h}^0 K^T(T+\xi_1,h+\theta)A_1^T(h+\xi_1)R_0(\xi_1,\xi_2)\\ &\times A_1(h+\xi_2)K(T+\xi_2,h+s) \dd \xi_2\dd \xi_1,\quad\theta,s\in[-h,0],
\end{align*}
we have
\begin{align*}
M_1 &=\Psi(-h,-h; M_0,Q_0,R_0),\\
Q_1(\theta) &=\Psi(-h,\theta; M_0,Q_0,R_0),\\
R_1(\theta,s) &=\Psi(\theta,s; M_0,Q_0,R_0).
\end{align*}
From here, we immediately conclude that
$$
M_1=R_1(-h,-h),\quad Q_1(\theta) = R_1(-h,\theta).
$$
We now consider equation~(\ref{discr_Lyap_ph}). Combining the terms of the same kind, it is easy to see that this operator equation gives us three matrix equations:
\begin{align*}
M_0 &= R_1(-h,-h) + Y_0(-h,-h),\\
Q_0(\theta) &=R_1(-h,\theta) + Y_0(-h,\theta),\\
R_0(\theta,s) &= R_1(\theta,s)+ Y_0(\theta,s).
\end{align*}
This, in turn, implies that
\begin{equation}
\label{M_Q_0}
M_0=R_0(-h,-h),\quad Q_0(\theta) = R_0(-h,\theta),
\end{equation}
and thus the first two equations are particular cases of the third one. We conclude that the operator equation~(\ref{discr_Lyap_ph}) gives us the matrix integral equation for $R_0,$
\begin{align*}
&R_0(\theta,s) = K^T(T,h+\theta) R_0(-h,-h) K(T,h+s)\label{eq_for_R}\\ &+ K^T(T,h+\theta)\int_{-h}^0 R_0(-h,\xi)A_1(h+\xi)K(T+\xi,h+s)\dd\xi\notag\\
&+ \int_{-h}^0 K^T(T+\xi,h+\theta)A_1^T(h+\xi)R_0(\xi,-h)\dd\xi K(T,h+s)\notag\\
&+ \int_{-h}^0\int_{-h}^0 K^T(T+\xi_1,h+\theta)A_1^T(h+\xi_1)R_0(\xi_1,\xi_2)A_1(h+\xi_2)\notag\\ &\times K(T+\xi_2,h+s) \dd \xi_2\dd \xi_1
+\int_{0}^T K^T(\tau,h+\theta) W(\tau) K(\tau,h+s)\dd \tau,\; \theta,s\in[-h,0],\notag
\end{align*}
together with the conditions~(\ref{M_Q_0}) and the symmetry assumption $R_0(\theta,s)=R_0^T(s,\theta).$
For further convenience, we now make a shift of arguments and define a new function
$$
U_0(\theta,s)=R_0(\theta-h,s-h),\quad \theta,s\in[0,h].
$$
The obtained result is summarized in the following lemma.
\begin{lemma}
If $\,U_0\in\mathcal C([0,h]^2,\R^{n\times n})$ solves an integral equation
\begin{align}
\label{eq_integral_U_0}
    U_0(\theta,s) &= [\mathcal L U_0] (\theta,s)+\int_0^{T} K^T(\xi,\theta) W(\xi) K(\xi,s)\dd \xi,\quad \theta,s\in[0,h],
\end{align}
where $\,\mathcal L\!:\, \mathcal C([0,h]^2,\R^{n\times n})\to \mathcal C([0,h]^2,\R^{n\times n})$ is defined as
\begin{align}
\label{operator_L}
&[\mathcal L U_0] (\theta,s)=K^T(T,\theta) U_0(0,0) K(T,s) \\ &+ K^T(T,\theta)\int_{-h}^0 U_0(0,h+\xi)A_1(h+\xi)K(T+\xi,s)\dd\xi\notag\\
&+ \int_{-h}^0 K^T(T+\xi,\theta)A_1^T(h+\xi)U_0(\xi+h,0)\dd\xi K(T,s)\notag\\
&+ \int_{-h}^0\!\int_{-h}^0\! K^T(T+\xi_1,\theta)A_1^T(h+\xi_1)U_0(h+\xi_1,h+\xi_2)\notag\\ &\times A_1(h+\xi_2) K(T+\xi_2,s) \dd \xi_2\dd \xi_1,\notag 
\end{align}
then the operator $\mathcal P(0)\in\mathcal B(\mathcal H)$ defined by
\begin{align}
\label{formula_P_0}
\mathcal P(0)\ph =
\begin{pmatrix}
    U_0(0,0)\ph_0 + \int_{-h}^0 U_0(0,h+\theta)A_1(h+\theta)\Phi(\theta)\dd \theta \\
    A_1^T(h+\cdot)\[U_0(h+\cdot,0)\ph_0 +  \int_{-h}^0U_0(h+\cdot,h+s)A_1(h+s)\Phi(s)\dd s\]
\end{pmatrix}
\end{align}
solves the operator Lyapunov equation~\textup{(\ref{discr_Lyap_ph})} with $\mathcal W$ given by \textup{(\ref{formula_W})}.
\end{lemma}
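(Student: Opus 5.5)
The plan is to verify the operator equation~(\ref{discr_Lyap_ph}) directly, running the derivation preceding the lemma in reverse. Set $R_0(\theta,s)=U_0(\theta+h,s+h)$ for $\theta,s\in[-h,0]$, and define $M_0=R_0(-h,-h)=U_0(0,0)$ and $Q_0(\theta)=R_0(-h,\theta)=U_0(0,h+\theta)$, so that~(\ref{M_Q_0}) holds by construction.

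The first step is to check that the operator~(\ref{formula_P_0}) is bounded and produces the quadratic form~(\ref{structure_P_0}). Boundedness is immediate, since $U_0$ and $A_1$ are continuous on compact sets and the integral operators with continuous kernels map $L^2$ into continuous functions. Taking the inner product of $\ph$ with $\mathcal P(0)\ph$ gives the first component's contribution $\ph_0^\star U_0(0,0)\ph_0+\ph_0^\star\int Q_0A_1\Phi$. The second component contributes $\int\Phi^\star(\theta)A_1^T(h+\theta)U_0(h+\theta,0)\ph_0\,\dd\theta$ plus the double integral in $R_0$.

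These cross terms combine into $2\Re(\cdot)$ only if $U_0(h+\theta,0)=U_0(0,h+\theta)^T$, and self-adjointness of $\mathcal P(0)$ likewise requires $U_0(\theta,s)=U_0^T(s,\theta)$. I would therefore first show that any continuous solution of~(\ref{eq_integral_U_0}) is symmetric. The forcing term is symmetric, and the operator $\mathcal L$ commutes with transposition: transposing $[\mathcal LU_0](s,\theta)$ yields $[\mathcal L U_0^T](\theta,s)$ with $U_0^T(\theta,s):=U_0(s,\theta)^T$, because the two middle terms of~(\ref{operator_L}) swap. Hence $U_0^T$ solves the same equation.

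This does not give uniqueness, which fails in general. If needed, I would replace $U_0$ by its symmetrization $\tfrac12(U_0+U_0^T)$, which is again a solution of~(\ref{eq_integral_U_0}) since the equation is linear with symmetric forcing. Alternatively, one can note that the real-part identity $\langle\ph,\mathcal P\ph\rangle$ only sees the symmetric part anyway, but the operator itself must be self-adjoint. I expect this symmetry bookkeeping to be the only delicate point.

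Next comes the main computation. Compute $\mathcal U\ph$ via~(\ref{operator_monodromy}) and~(\ref{operator_monodromy_2}): its first component is $K(T,0)\ph_0+\int K(T,h+\tau)A_1(h+\tau)\Phi(\tau)\dd\tau$, and its second component is $x(T+\xi,0,\ph)$. Substituting these into~(\ref{structure_P_0}) and expanding, exactly as in the text, gives $\langle\mathcal U\ph,\mathcal P(0)\mathcal U\ph\rangle$ in the same three-term form with kernels $\Psi(\cdot,\cdot;M_0,Q_0,R_0)$. After the shift of variables, $\Psi(\theta-h,s-h;\cdot)=[\mathcal LU_0](\theta,s)$. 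Likewise~(\ref{struct_Lyap_eq}) expresses the right-hand side with kernel $Y_0(\theta-h,s-h)=\int_0^TK^T(\xi,\theta)WK(\xi,s)\dd\xi$.

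Equation~(\ref{eq_integral_U_0}) then says precisely that $R_0=R_1+Y_0$ pointwise. Evaluating this at $(-h,-h)$ and at $(-h,\theta)$ gives the $M$- and $Q$-identities. Thus the three kernels of $\langle\ph,\mathcal P(0)\ph\rangle-\langle\mathcal U\ph,\mathcal P(0)\mathcal U\ph\rangle$ coincide with those of the right-hand side, and~(\ref{discr_Lyap_ph}) holds for all $\ph$.

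Finally, since both sides of~(\ref{discr_Lyap}) are self-adjoint bounded operators, equality of the quadratic forms on the complex space $\mathcal H$ implies equality of the operators by polarization. The expansion itself is routine Fubini manipulation, justified by the continuity of $K$ on the relevant bounded triangles.
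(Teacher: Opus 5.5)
Your kernel-matching computation is the paper's own derivation in Section~\ref{sec_P_0} run backwards. You expand via $\Psi$, identify $\Psi(\theta-h,s-h;M_0,Q_0,R_0)$ with $[\mathcal L U_0](\theta,s)$, and read off the $M$- and $Q$-identities as the values of $R_0=R_1+Y_0$ at $(-h,-h)$ and $(-h,\theta)$. For a symmetric $U_0$ this is complete. It is also the only case the paper's derivation covers, since the paper builds $R_0(\theta,s)=R_0^T(s,\theta)$ into its ansatz through (\ref{cond_P_0_self}).

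The gap is your handling of a non-symmetric $U_0$, which the statement allows. The planned step ``every continuous solution is symmetric'' is false. Suppose the Lyapunov condition fails, and take $\zeta_1,\zeta_2$ as in Section~\ref{sec_necessity_exist_unique}. For $\theta\le T$ they satisfy the adjoint Cauchy-type identity $\zeta_1(\theta)=e^{\lambda T}\bigl[K^T(T,\theta)\zeta_1(0)+\int_{-h}^0K^T(T+\xi,\theta)A_1^T(h+\xi)\zeta_1(h+\xi)\dd\xi\bigr]$, and the same for $\zeta_2$ with $e^{-\lambda T}$. Hence $V=\tfrac12\bigl(\zeta_1(\theta)\zeta_2^T(s)-\zeta_2(\theta)\zeta_1^T(s)\bigr)$ (its real or imaginary part) solves $V=\mathcal L V$. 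It is antisymmetric, and nonzero when $e^{2\lambda T}\neq 1$, so for $W\equiv\mathbb{O}$ there are non-symmetric solutions. Your symmetrization fallback proves the identity for the operator built from $\tfrac12(U_0+U_0^T)$, not for the $\mathcal P(0)$ of the statement; for this $V$ it just returns the zero operator. Your remark that the quadratic form ``only sees the symmetric part'' also misses something. Equation (\ref{discr_Lyap_ph}) is an identity of complex numbers. Its imaginary part comes from the anti-self-adjoint operator built from $\tfrac12(U_0-U_0^T)$, and showing that it vanishes is exactly what is left to prove.

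The repair removes symmetry from the argument altogether. Do not package the cross terms as $2\Re(\cdot)$. Instead expand the sesquilinear form $\langle\psi,\mathcal P(0)\ph\rangle$, which has four terms with kernels $U_0(0,0)$, $U_0(0,h+\theta)$, $U_0(h+\theta,0)$ and $U_0(h+\theta,h+s)$. Substituting $\mathcal U\psi$ and $\mathcal U\ph$ from (\ref{operator_monodromy})--(\ref{operator_monodromy_2}) reproduces the four terms of (\ref{operator_L}) verbatim. In particular the third term comes out with $U_0(\xi+h,0)$ rather than the $Q_0^T(\xi)$ that occurs in $\Psi$. Likewise, $\int_0^T\langle S(\xi,0)\psi,\mathcal W(\xi)S(\xi,0)\ph\rangle\dd\xi$ has the kernel $\int_0^TK^T(\xi,\theta)W(\xi)K(\xi,s)\dd\xi$ in the same four slots. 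Thus (\ref{eq_integral_U_0}) gives $\langle\psi,\mathcal P(0)\ph\rangle-\langle\mathcal U\psi,\mathcal P(0)\mathcal U\ph\rangle=\int_0^T\langle S(\xi,0)\psi,\mathcal W(\xi)S(\xi,0)\ph\rangle\dd\xi$ for all $\psi,\ph\in\mathcal H$. This is the operator equation itself, valid for every continuous solution $U_0$. Self-adjointness of $\mathcal P(0)$ then becomes an extra property that holds when $U_0$ is symmetric, and your final polarization step is no longer needed.
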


\subsection{Derivation of $\mathcal P(t)$}
\label{sec_P_t}
In this section, we assume that the solution $\,U_0\in\mathcal C([0,h]^2,\R^{n\times n})$ to the integral equation~(\ref{eq_integral_U_0}) exists. Having the expression for $\mathcal P(0)$ defined by (\ref{formula_P_0}) at hand, we can now compute 
the operator $\mathcal P(t)\!:\,[0,T]\to \mathcal B(\mathcal H)$ given by formula~(\ref{P_operator_formula}) for our particular problem.
We start with the second term in~(\ref{P_operator_formula}) substituting there the Cauchy formula~(\ref{formula_Cauchy}):
\begin{align}
\int_t^T \langle S &(\xi,t) \ph, \mathcal W(\xi) S(\xi,t) \ph \rangle \dd \xi = \int_t^T x^\star(\xi,t,\ph)W(\xi) x(\xi,t,\ph)\dd \xi \notag\\
&=\int_t^T \[K(\xi,t)\ph_0 + \int_{-h}^0 K(\xi,t+h+\tau_1)A_1(t+h+\tau_1)\Phi(\tau_1)\dd\tau_1\]^\star W(\xi) \notag\\
&\times  \[K(\xi,t)\ph_0 + \int_{-h}^0 K(\xi,t+h+\tau_2)A_1(t+h+\tau_2)\Phi(\tau_2)\dd\tau_2\]\dd \xi\notag
\\
&=\ph_0^\star Y(t,-h,-h)\ph_0\label{term_2_P_t}\\
&+2\Re\left(\ph_0^\star\int_{-h}^0 Y(t,-h,\theta)A_1(t+h+\theta)\Phi(\theta)\dd\theta\right)\notag\\
&+\int_{-h}^0\int_{-h}^0 \Phi^\star (\theta) A_1^T(t+h+\theta) Y(t,\theta,s) A_1(t+h+s)\Phi(s)\dd s\dd\theta\notag,
\end{align}
where
$$
Y(t,\theta,s) = \int_t^T K^T(\xi,t+h+\theta) W(\xi)  K(\xi,t+h+s) \dd \xi,\quad t\in[0,T],\quad \theta,s\in[-h,0],
$$
and $Y(0,\theta,s)=Y_0(\theta,s).$ Now, let us turn our attention to computing the first term in~(\ref{P_operator_formula}),
$$
\langle S(T,t)\ph, \mathcal P(0) S(T,t) \ph \rangle,\quad t\in [0,T].
$$
To do this, we need $S(T,t)\ph = \bigl(
    x(T,t,\ph),\,
    x_T(t,\ph)
\bigr),$ where, according to the Cauchy formula,
\begin{align}
\label{Cauchy_P_t_1}
x(T+\theta,t,\ph)&= K(T+\theta,t)\ph_0 + \int_{-h}^0 \!\! K(T+\theta,t+\tau+h)A_1(t+\tau+h)\Phi(\tau)\dd\tau,
\end{align}
if $T+\theta\>t,$ and 
\begin{align*}
x(T+\theta,t,\ph)&=\Phi(T+\theta-t),\quad t-h\<T+\theta<t.
\end{align*}
Here, $\theta\in[-h,0]$ and $\,t\in[0,T].$
Then,
\begin{align*}
\inner{S(T,t)\ph}{\mathcal P(0)S(T,t)\ph} = I_1 + (I_2 + \Lambda_2) + (I_3+ \Lambda_3),
\end{align*}
where
\begin{align*}
I_1 &= x^\star(T,t,\ph) U_0(0,0) x(T,t,\ph),\\
I_2 &=2\Re\left(x^\star(T,t,\ph)\int_{\max\{t-T,-h\}}^0 U_0(0,h+\theta)A_1(h+\theta) x(T+\theta,t,\ph)\dd \theta\right), \\
I_3 &= \int_{\max\{t-T,-h\}}^0\int_{\max\{t-T,-h\}}^0 x^\star(T+\theta,t,\ph) A_1^T(h+\theta) U_0(h+\theta,h+s)\\ &\times A_1(h+s) x(T+s,t,\ph)\dd s\dd \theta,\notag
\end{align*}
and $\Lambda_2=\Lambda_3=0,$ if $t-T\<-h,$ and
\begin{align*}
\Lambda_2&=2\Re\left(x^\star(T,t,\ph)\int_{-h}^{t-T} U_0(0,h+\theta)A_1(h+\theta) \Phi(T+\theta-t)\dd \theta\right),\\
\Lambda_3&= \int_{-h}^{t-T}\int_{t-T}^0 \Phi^\star(T+\theta-t) A_1^T(h+\theta) U_0(h+\theta,h+s) A_1(h+s)\\ &\times x(T+s,t,\ph)\dd s\dd \theta\\
&+\int_{t-T}^0 \int_{-h}^{t-T} x^\star(T+\theta,t,\ph) A_1^T(h+\theta) U_0(h+\theta,h+s)A_1(h+s) \\ &\times  \Phi(T+s-t)\dd s\dd \theta \\
&+ \int_{-h}^{t-T}\int_{-h}^{t-T} \Phi^\star(T+\theta-t) A_1^T(h+\theta) U_0(h+\theta,h+s) A_1(h+s)\\ &\times  \Phi(T+s-t)\dd s\dd \theta,
\end{align*}
if $t-T\>-h.$ Here, the integrals are split in such a way that $x(T+\cdot,t,\ph)$ always corresponds to the expression~(\ref{Cauchy_P_t_1}). Substituting this expression, expanding the brackets, and taking into account the term~(\ref{term_2_P_t}), after some tedious calculations we get the following structure:
\begin{align}
v_0(t,\ph) &= \langle\ph, \mathcal P(t)\ph \rangle = \ph_0^\star M(t)\ph_0 + 2\Re \left(\ph_0^\star\int_{-h}^0 Q(t,\theta)A_1(t+h+\theta)\Phi(\theta)\dd \theta \right)\notag \\ &+ \int_{-h}^0\int_{-h}^0\Phi^\star(\theta)A_1^T(t+h+\theta)R(t,\theta,s)A_1(t+h+s)\Phi(s)\dd s\dd \theta + \Lambda, \label{formula_v_t}
\end{align}
where $\,\Lambda=\Lambda_2+\Lambda_3,$ $\,\alpha(t)=\max\{t-T,-h\},$ and
\begin{align}
M(t) &= R(t,-h,-h), \notag\\  Q(t,\theta) &= R(t,-h,\theta), \notag\\
R(t,\theta,s) &=  K^T(T,h+t+\theta)U_0(0,0) K(T,h+t+s)\notag\\
&+ K^T(T,t+\theta+h) \int_{\alpha(t)}^0 U_0(0,h+\xi) A_1(h+\xi) K(T+\xi,t+s+h)\dd\xi \label{eq_R_t}\\
&+ \int_{\alpha(t)}^0 K^T(T+\xi,t+\theta+h) A_1^T(h+\xi) U_0(h+\xi,0) \dd\xi K(T,t+s+h)\notag\\
&+\int_{\alpha(t)}^0\int_{\alpha(t)}^0 K^T(T+\xi_1,t+\theta+h) A_1^T(h+\xi_1) U_0(h+\xi_1,h+\xi_2) A_1(h+\xi_2)\notag \\
&\times K(T+\xi_2,t+s+h)\dd \xi_2\dd \xi_1 +Y(t,\theta,s),\quad \theta,s\in[-h,0].\notag
\end{align}
Equation~(\ref{eq_R_t}) serves as a definition of the matrix function $R(t,\theta,s)$ when the function $U_0\in\mathcal C([0,h]^2,\R^{n\times n}) $ is assumed known.
From now on, let us distinguish between the cases when $t\in[0,T-h]$ and $t\in[T-h,T].$

\textbf{Case~1: $t\in[0,T-h]$.} In this case, $\alpha(t)=-h$ and $\Lambda=0.$ When $t\<T-h,$ the function $R(t,\theta,s)$ is defined by formula~(\ref{eq_R_t}) for any $\theta,s\in[-h,0].$
The desired Lyapunov functional is then given by formula~(\ref{formula_v_t}) with $\Lambda=0,$ $\,M(t) = R(t,-h,-h),$ $\,Q(t,\theta)=R(t,-h,\theta).$

\textbf{Case~2: $t\in[T-h,T]$.} In this case, $\alpha(t)=t-T,$ however, we first notice that $\alpha(t)$ in~(\ref{eq_R_t}) can be still replaced with $-h.$ Indeed, the integrand in the first integral is only non-zero when $\xi\>t+s+h-T.$ Since $t+s+h-T\>t-T\>-h,$ we can extend the lower integration limit to $-h.$ The other integrals can be treated similarly. Second, we mention that the function $R(t,\theta,s)$ defined by formula~(\ref{eq_R_t}) is zero if either $\theta>T-h-t$ or $s>T-h-t.$ Hence, the functional~(\ref{formula_v_t}) in this case is in fact of the form
\begin{align*}
\langle\ph, &\mathcal P(t)\ph \rangle = \ph_0^\star R(t,-h,-h)\ph_0 + 2\Re \left(\ph_0^\star\int_{-h}^{T-h-t} R(t,-h,\theta)A_1(t+h+\theta)\Phi(\theta)\dd \theta \right)\notag \\ &+ \int_{-h}^{T-h-t}\int_{-h}^{T-h-t}\Phi^\star(\theta)A_1^T(t+h+\theta)R(t,\theta,s)A_1(t+h+s)\Phi(s)\dd s\dd \theta + \Lambda. 
\end{align*}
We now proceed with the computation of $\Lambda=\Lambda_2+\Lambda_3:$
\begin{align*}
\Lambda_2 &=2\Re\Biggl(\left[K(T,t)\ph_0 + \int_{-h}^0 K(T,t+\tau+h)A_1(t+\tau+h)\Phi(\tau)\dd\tau\right]^\star\\ &\times \int_{T-h-t}^{0} U_0(0,h+\xi+t-T)A_1(h+\xi+t) \Phi(\xi)\dd \xi\Biggr),\\
\Lambda_3 &= 
2\Re\;\ph_0^\star \int_{t-T}^0 K^T(T+\theta,t) A_1^T(h+\theta) \\ &\times \int_{T-h-t}^{0}  U_0(h+\theta,h+t+\xi_2-T) A_1(h+t+\xi_2) \Phi(\xi_2)\dd \xi_2\dd \theta\\
&+\int_{T-h-t}^{0} \Phi^\star(\xi_1) A_1^T(h+t+\xi_1) \int_{t-T}^0  U_0(h+t+\xi_1-T,h+s) A_1(h+s)\\
&\times \int_{-h}^0 K(T+s,t+\tau_2+h)A_1(t+\tau_2+h)\Phi(\tau_2)\dd\tau_2\dd s\dd \xi_1\\
&+\int_{-h}^0 \Phi^\star(\tau_1) A_1^T(t+\tau_1+h) \int_{t-T}^0 K^T(T+\theta,t+\tau_1+h)A_1^T(h+\theta)   \\ &\times \int_{T-h-t}^{0}  U_0(h+\theta,h+t+\xi_2-T) A_1(h+t+\xi_2) \Phi(\xi_2)\dd \xi_2\dd \theta \dd\tau_1,\\
&+ \int_{T-h-t}^{0}\int_{T-h-t}^{0} \Phi^\star(\xi_1) A_1^T(h+t+\xi_1) U_0(h+t+\xi_1-T,h+t+\xi_2-T)\\ &\times A_1(h+t+\xi_2) \Phi(\xi_2)\dd \xi_2\dd \xi_1,
\end{align*}
where the periodicity of $A_1$ is used. Let us first collect the terms of the structure $2\Re\, \ph_0^\star \int \ldots \Phi(\xi)\dd \xi$ in $\Lambda:$
\begin{align*}
2&\Re\; \ph_0^\star  \int_{T-h-t}^{0} \biggl[ K^T(T,t) U_0(0,h+\xi+t-T) \\ &+ \int_{t-T}^0 K^T(T+\tau,t)A_1^T(h+\tau)U_0(h+\tau,h+\xi+t-T)\dd\tau \biggr] A_1(h+\xi+t) \Phi(\xi)\dd \xi.
\end{align*}
Similarly, we combine the remaining terms:
\begin{align*}
&\int_{T-h-t}^{0}\dd \xi\; \Phi^\star(\xi)A_1^T(h+\xi+t)\int_{-h}^0 \dd\tau_2 
\biggl[U_0(h+\xi+t-T,0)  K(T,h+t+\tau_2)\\ &+\int_{t-T}^0 \!\!\! U_0(h+t+\xi-T,h+s) A_1(h+s) K(T+s,t+\tau_2+h)\dd s \biggr] A_1(h+t+\tau_2)\Phi(\tau_2)\\
&+\int_{-h}^0\dd\tau_1\; \Phi^\star(\tau_1) A_1^T(t+\tau_1+h) \int_{T-h-t}^{0}\dd \xi \biggl[ K^T(T,h+t+\tau_1) U_0(0,h+\xi+t-T) \\ &+ \int_{t-T}^0\!\!\!\! K^T(T+\theta,t+\tau_1+h)A_1^T(h+\theta)  U_0(h+\theta,h+t+\xi-T) \dd \theta \biggr] A_1(h+\xi+t) \Phi(\xi)\\
&+ \int_{T-h-t}^{0}\int_{T-h-t}^{0} \Phi^\star(\xi_1) A_1^T(h+t+\xi_1) U_0(h+t+\xi_1-T,h+t+\xi_2-T)\\ &\times A_1(h+t+\xi_2) \Phi(\xi_2)\dd \xi_2\dd \xi_1.
\end{align*}
Note that the integrals with respect to $\tau_1$ and $\tau_2$ here are only from $-h$ to $T-h-t,$ since the fundamental matrix inside is equal to zero otherwise. We conclude that, to obtain the same structure of the functional $v_0(t,\ph)$ as in Case~1, we have to define the function $R$ as follows:
\begin{align*}
R(t,\xi_1,\xi_2)&=U_0(h+\xi_1+t-T,0)  K(T,h+t+\xi_2)\\ &+\int_{t-T}^0 \!\!\! U_0(h+t+\xi_1-T,h+s) A_1(h+s) K(T+s,t+\xi_2+h)\dd s,\\ &\phantom{asdasfasdfafasdfasdfas}\xi_1\in[T-h-t,0],\,\xi_2\in[-h,T-h-t],\\
R(t,\xi_1,\xi_2)&=K^T(T,h+t+\xi_1) U_0(0,h+\xi_2+t-T) \\ &+ \int_{t-T}^0\!\!\!\! K^T(T+\theta,t+\xi_1+h)A_1^T(h+\theta)  U_0(h+\theta,h+t+\xi_2-T) \dd \theta,\\ &\phantom{asdasfasdfafasdfasdfas}\xi_1\in[-h,T-h-t],\,\xi_2\in[T-h-t,0],\\
R(t,\xi_1,\xi_2)&=U_0(h+t+\xi_1-T,h+t+\xi_2-T),\quad \xi_1,\xi_2\in[T-h-t,0],
\end{align*}
and for $\xi_1,\xi_2\in[-h,T-h-t]$
the function $R(t,\xi_1,\xi_2)$ is still defined by formula~(\ref{eq_R_t}) with $\alpha(t)$ replaced by $-h.$ 
We conclude that the Lyapunov functional in both cases has the form
\begin{align}
v_0(t,\ph) &= \langle\ph, \mathcal P(t)\ph \rangle = \ph_0^\star R(t,-h,-h)\ph_0 \notag \\ &+ 2\Re \left(\ph_0^\star\int_{-h}^{0} R(t,-h,\theta)A_1(t+h+\theta)\Phi(\theta)\dd \theta \right)\label{formula_v_t_R}\\ &+ \int_{-h}^{0}\int_{-h}^{0}\Phi^\star(\theta)A_1^T(t+h+\theta)R(t,\theta,s)A_1(t+h+s)\Phi(s)\dd s\dd \theta, \notag 
\end{align}
with the matrix function $R(t,\theta,s)$ defined as above.

\begin{remark}
     Whereas the structure of the operator $\mathcal P(0)$ as in (\ref{structure_P_0}) is an assumption, although very natural and coming from the structure of the solution operator, the obtained structure of $\mathcal P(t)$ is a consequence of those structure of $\mathcal P(0).$
\end{remark}
 
\subsection{The function $R(t,\theta,s)$ as an extension of $U_0$}
In this section, let us take a closer look at the definition of the function $R$ adopted above. We start with considering the integral equation~(\ref{eq_R_t}). First of all, we notice that
\begin{align*}
Y(t,\theta,s) &= \int_t^T K^T(\xi,t+h+\theta) W(\xi)  K(\xi,t+h+s) \dd \xi \\ &= \int_{t+h+\theta}^T K^T(\xi,t+h+\theta) W(\xi)  K(\xi,t+h+s) \dd \xi,
\end{align*}
if $t+h+\theta\<T,$ and $Y(t,\theta,s) = 0$ otherwise. This implies that in fact
$$
Y(t,\theta,s) \eqd \widetilde{Y}(t+\theta+h,t+s+h).
$$
But now equation~(\ref{eq_R_t}) tells us that $R$ is actually a function of two variables, namely, 
$$
R(t,\theta,s)\eqd\widetilde{U}(t+\theta+h,t+s+h).
$$
We now notice that the right-hand side of equation~(\ref{eq_R_t}) in the new variables is represented by an extension of the same operator $\mathcal L$ defined in (\ref{operator_L}), namely, $$\,\mathcal L\!:\, \mathcal C([0,h]^2,\R^{n\times n})\to \mathcal C([0,T]^2,\R^{n\times n}).$$ 
The definition of the function $R$ can be then written in terms of $\widetilde{U}$ as follows (see Figure~\ref{Fig_1} for the illustration):

$(A).$ For $(\xi,\eta)\in \bigl\{0\<\xi,\eta\<T\bigr\}\bigcap\bigl\{\xi-h\<\eta\<\xi+h\bigr\},$
\begin{align*}
\widetilde{U}(\xi,\eta)  = [\mathcal L U_0 ](\xi,\eta) + \int_{\xi}^T K^T(\tau,\xi) W(\tau)  K(\tau,\eta) \dd \tau,
\end{align*}

$(B).$ For
$
(\xi,\eta)\in \bigl\{T-h\<\xi-h\<\eta\<T\bigr\},
$
\begin{align*}
\widetilde{U}(\xi,\eta)  = U_0(\xi-T,0)K(T,\eta) + \int_{-h}^0 U_0(\xi-T,h+\tau)A_1(h+\tau)K(T+\tau,\eta)\dd\tau,
\end{align*}

$(C).$ For
$(\xi,\eta)\in \bigl\{T\<\eta\<\xi+h\<T+h\bigr\},$ $\widetilde{U}(\xi,\eta)=\widetilde{U}^T(\eta,\xi),$ where $\widetilde{U}(\eta,\xi)$ is defined in $(B),$

$(D).$ For $(\xi,\eta)\in \bigl\{T\<\xi,\eta\<T+h\bigr\},$ the function is defined by periodicity,
\begin{align*}
\widetilde{U}(\xi,\eta)  = U_0(\xi-T,\eta-T).
\end{align*}
The symmetry property $\widetilde{U}(\xi,\eta) = \widetilde{U}^T(\eta,\xi)$ follows from $R(t,\theta,s)=R^T(t,s,\theta).$ Here, 
the condition $\xi,\eta\in[0,T]$ in the case $(A)$ comes from $\theta,s\in[-h,T-h-t],$ whereas $\xi-h\<\eta\<\xi+h$ comes from the structure of the variables $\xi=t+\theta+h$ and $\eta=t+s+h.$ A similar translation to the new variables is made in cases $(B)-(D).$
\begin{figure}[h]
    \centering
    \includegraphics[width=0.5\linewidth]{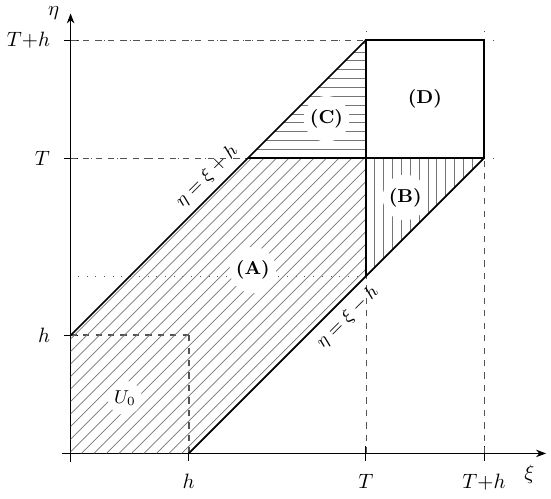}
    \caption{Domain of definition of the matrix $\widetilde{U}$}
    \label{Fig_1}
\end{figure}

Obviously, if $\xi,\eta\in[0,h]$ in $(A)$ then $\widetilde{U}(\xi,\eta) \equiv U_0(\xi,\eta)$ by (\ref{eq_integral_U_0}). 
Hence, the definition given in $(A)$ serves as an extension of the integral equation~(\ref{eq_integral_U_0}). In the sequel, we prove that the function $\widetilde U$ coincides with the delay Lyapunov matrix in the sense of Definition~\ref{def_Lyap_matrix} in all domains $(A)-(D),$ see Remark~\ref{remark_consistence_U_tilde_U} below. Furthermore, we propose a more general way of extending the equation~(\ref{eq_integral_U_0}) to the whole $\R^2$ in the next section. Then, we show that this extension agrees both with the definition of the function $\widetilde{U}$ and with the Definition~\ref{def_Lyap_matrix} of the delay Lyapunov matrix $U.$

\begin{remark}
It follows from the work \cite{Gomez2016} that the delay Lyapunov matrix in the sense of Definition~\ref{def_Lyap_matrix} satisfies both integral properties appeared in $(A)$ and $(B).$ Our theory shows that these properties actually define the delay Lyapunov matrix.
\end{remark}

\subsection{Extension of the integral equation for $U_0$}

Observe that
$[\mathcal L U](\theta,s)$ defines a nontrivial matrix not only for $\theta,s\in[0,T]$ but in principle for all $-\infty<\theta,s\<T$ and is zero if one of the variables is greater than $T.$ This motivates extending the integral equation~(\ref{eq_integral_U_0}) to all values of $\theta,s\<T.$ Specifically, let us consider
\begin{align}
\label{int_eq_extended}
U(\theta,s)  = [\mathcal L U_0 ](\theta,s) + \int_{\theta}^T K^T(\tau,\theta) W(\tau)  K(\tau,s) \dd \tau,\quad \theta,s\<T,
\end{align}
where $U_0$ is a solution to the integral equation~(\ref{eq_integral_U_0}) and $\,\mathcal L\!:\, \mathcal C([0,h]^2,\R^{n\times n})\to \mathcal C((-\infty,T]^2,\R^{n\times n}).$
Obviously, $U(\theta,s)\equiv U_0(\theta,s)$ for $\theta,s\in[0,h].$

Below, we shall prove that the extension~(\ref{int_eq_extended}) of the integral equation~(\ref{eq_integral_U_0}) is meaningful. To this end,
we first study some properties of the matrix function $U$ defined by~(\ref{int_eq_extended}).
\begin{lemma}
    \label{lemma_int_eq_periodicity}
    Equation \textup{(\ref{int_eq_extended})} implies periodicity of the function $U,$ i.e.
    $$
U(\theta-T,s-T)=U(\theta,s)\quad\forall \;\theta,s\< T.
    $$
\end{lemma}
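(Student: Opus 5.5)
The plan is to work directly from the definition (\ref{int_eq_extended}) and reduce the claim to the integral equation (\ref{eq_integral_U_0}) for $U_0$, read in its operator form, i.e. the discrete Lyapunov equation (\ref{discr_Lyap_ph}) with $\mathcal P(0)$ given by (\ref{formula_P_0}). Fix $\theta,s\<T$; then $\theta-T,s-T\<0\<T$, so $U(\theta-T,s-T)$ is also given by (\ref{int_eq_extended}).

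\textbf{Step 1: shift by periodicity.} Using $K(t+T,\tau+T)=K(t,\tau)$ and the $T$-periodicity of $W$, the integral term becomes
$$\int_{\theta-T}^T K^T(\tau,\theta-T)W(\tau)K(\tau,s-T)\dd\tau=\int_\theta^{2T}K^T(\tau,\theta)W(\tau)K(\tau,s)\dd\tau .$$
Split it as $\int_\theta^T+\int_T^{2T}$; the first piece is exactly the integral term of $U(\theta,s)$. Similarly, $[\mathcal L U_0](\theta-T,s-T)$ is obtained from (\ref{operator_L}) by replacing $K(T,\theta)$ and $K(T+\xi,\theta)$ with $K(2T,\theta)$ and $K(2T+\xi,\theta)$, and likewise in $s$. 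It therefore suffices to prove
$$[\mathcal L U_0]^{(2T)}(\theta,s)+\int_T^{2T}K^T(\tau,\theta)W(\tau)K(\tau,s)\dd\tau=[\mathcal L U_0](\theta,s),$$
where the superscript indicates this replacement.

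\textbf{Step 2: interpret the columns of $K$ as states.} For each column vector $c$ I would set
$$\psi_\theta c=\bigl(K(T,\theta)c,\;K(T+\cdot,\theta)c\bigr)\in\mathcal H.$$
By construction of $\mathcal L$ (see the derivation of $\Psi$ in Section~\ref{sec_P_0}), $[\mathcal L U_0](\theta,s)$ is the matrix of the sesquilinear form $\inner{\psi_\theta c_1}{\mathcal P(0)\psi_s c_2}$. Now apply Lemma~\ref{lemma_aux_fund} with intermediate point $T$, which is admissible because $2T+\xi\>T$ for $\xi\in[-h,0]$ (recall $T\>h$):
$$K(2T+\xi,\theta)=K(2T+\xi,T)K(T,\theta)+\int_{-h}^0K(2T+\xi,T+\zeta+h)A_1(T+\zeta+h)K(T+\zeta,\theta)\dd\zeta .$$
Combined with periodicity, $K(2T+\xi,T+\cdot)=K(T+\xi,\cdot)$, this matches the Cauchy formula (\ref{operator_monodromy_2}). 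Hence $\bigl(K(2T,\theta),K(2T+\cdot,\theta)\bigr)c=\mathcal U\psi_\theta c$, and so $[\mathcal L U_0]^{(2T)}(\theta,s)$ is the matrix of $\inner{\mathcal U\psi_\theta c_1}{\mathcal P(0)\mathcal U\psi_s c_2}$. In the same way, $x(\tau,0,\psi_\theta c)=K(T+\tau,\theta)c$ for $\tau\in[0,T]$, so the integral $\int_T^{2T}$ is the matrix of $\int_0^T\inner{S(\tau,0)\psi_\theta c_1}{\mathcal W(\tau)S(\tau,0)\psi_s c_2}\dd\tau$.

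\textbf{Step 3: close with the Lyapunov equation.} The preceding Lemma shows that $\mathcal P(0)$ from (\ref{formula_P_0}) solves (\ref{discr_Lyap_ph}). Polarizing that quadratic identity gives the sesquilinear identity, and applying it to $\psi_\theta c_1,\psi_s c_2$ yields exactly the identity required in Step~1. This proves $U(\theta-T,s-T)=U(\theta,s)$.

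The main obstacle is the bookkeeping in Step~2. One must check carefully that the four terms of $\mathcal L$ really coincide with the expansion of $\inner{\mathcal U\psi}{\mathcal P(0)\mathcal U\psi}$, including the arguments $h+\xi$ in $A_1$ and in $U_0$. One must also handle the ranges where $K$ vanishes, namely $\theta>T$ and arguments $T+\zeta<\theta$, where the convention $K=\mathbb O$ makes the formulas consistent. If the operator-level argument proves awkward, the fallback is to substitute the Lemma~\ref{lemma_aux_fund} expansions directly into $[\mathcal L U_0]^{(2T)}$, regroup the terms, and use (\ref{eq_integral_U_0}) pointwise on $[0,h]^2$.
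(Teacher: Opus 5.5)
Your proof is correct. It is the operator-level version of the computation the paper carries out in coordinates: the ingredients are the same, but they are organized differently.

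The paper never leaves the matrix level. It applies Lemma~\ref{lemma_aux_fund} to expand $K(T+\xi,s-T)$ and $K(T+\xi,\theta-T)$ through the point $0$; after the periodic shift, this is your expansion of $K(2T+\xi,\cdot)$ through $T$. It then regroups $[\mathcal L U_0](\theta-T,s-T)$ into the four-term form (\ref{operator_L}), with the kernel $U_0$ replaced by $\mathcal L U_0$ on $[0,h]^2$. There it substitutes $\mathcal L U_0=U_0-I_W$ pointwise via (\ref{eq_integral_U_0}), where $I_W(\theta,s)=\int_0^T K^T(\xi,\theta)W(\xi)K(\xi,s)\dd\xi$. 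Finally it uses Lemma~\ref{lemma_aux_fund} a second time to identify the leftover $\widetilde\Lambda$ with $\int_0^T K^T(T+\xi,\theta)W(\xi)K(T+\xi,s)\dd\xi$. This cancels the $\int_0^T$ part of the shifted source integral; it is precisely your fallback.

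In your language:
\begin{itemize}
\item the paper's regrouping is the identity $\mathcal U\psi_\theta c=\bigl(K(2T,\theta)c,\,K(2T+\cdot,\theta)c\bigr)$;
\item the $\widetilde\Lambda$ computation is the identity $x(\tau,0,\psi_\theta c)=K(T+\tau,\theta)c$.
\end{itemize}
The bookkeeping you worried about does go through. $K$ is real, so $K^\star=K^T$. The condition $2T+\xi\ge T\ge\theta$ makes Lemma~\ref{lemma_aux_fund} applicable. The factors $A_1(h+\xi)$ are the same on both sides.

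Your packaging buys transparency. Periodicity of $U$ becomes literally the Lyapunov equation (\ref{discr_Lyap_ph}) tested on the states $\psi_\theta c$, with the shift by $-T$ realized by the monodromy operator, and no term matching is needed.

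The paper's packaging buys self-containedness. It does not rely on the lemma of Section~\ref{sec_P_0}, whose derivation was written under the symmetry assumption (\ref{cond_P_0_self}), and it visibly uses no symmetry of $U_0$. You can get the same independence in your route. Complex polarization needs no self-adjointness. Better still, the sesquilinear identity you invoke can be checked directly from (\ref{eq_integral_U_0}), with no polarization and no symmetry. The reason is that $\inner{\ph}{\mathcal P(0)\psi}$, $\inner{\mathcal U\ph}{\mathcal P(0)\mathcal U\psi}$ and $\int_0^T\inner{S(\xi,0)\ph}{\mathcal W(\xi)S(\xi,0)\psi}\dd\xi$ are the same sesquilinear expression in $(\ph_0,\Phi)$ and $(\psi_0,\Psi)$, with kernels $U_0$, $\mathcal L U_0$ and $I_W$ on $[0,h]^2$, respectively.
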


\begin{proof} Let us consider
    \begin{align*}
[\mathcal L U_0] (\theta-T,s-T) &=K^T(T,\theta-T) U_0(0,0) K(T,s-T) \\ &+ K^T(T,\theta-T)\int_{-h}^0 U_0(0,h+\xi)A_1(h+\xi)K(T+\xi,s-T)\dd\xi\notag\\
&+ \int_{-h}^0 K^T(T+\xi,\theta-T)A_1^T(h+\xi)U_0(\xi+h,0)\dd\xi K(T,s-T)\notag\\
&+ \int_{-h}^0\!\int_{-h}^0\! K^T(T+\xi_1,\theta-T)A_1^T(h+\xi_1)U_0(h+\xi_1,h+\xi_2)\\ &\times A_1(h+\xi_2) K(T+\xi_2,s-T) \dd \xi_2\dd \xi_1. 
\end{align*}
To derive a connection between $ [\mathcal L U_0](\theta - T,s-T)$ and $ [\mathcal L U_0](\theta,s),$ we use Lemma~\ref{lemma_aux_fund} written as
\begin{align*}
K(T&+\xi, s-T) = K(T+\xi,0)K(0,s-T)  \\ &+\int_{-h}^0 K(T+\xi,\tau+h)A_1(\tau+h)K(\tau,s-T)\dd\tau\\
&=K(T+\xi,0)K(T,s) + \int_{-h}^0 K(T+\xi,\tau+h)A_1(\tau+h)K(T+\tau,s)\dd\tau,
\end{align*}
where $T+\xi\>0,$ $s-T\<0.$ Let us replace the fundamental matrix in the previous formula using this expression. Then, we expand the brackets and combine the terms of a similar kind. For example, for the terms with $K^T(T,\theta)\ldots K(T,s)$ we get
\begin{align*}
K^T(T,\theta)&\Biggl[K^T(T,0)U_0(0,0)K(T,0) \\ &+ K^T(T,0)\int_{-h}^0 U_0(0,\tau+h)A_1(h+\tau)K(T+\tau,0)\dd\tau \\
&+\int_{-h}^0 K^T(T+\tau,0) A_1^T(h+\tau)U_0(h+\tau,0)\dd\tau K(T,0)\\
&+ \int_{-h}^0\int_{-h}^0 K(T+\tau_1,0)^T A_1^T(h+\tau_1)U_0(h+\tau_1,h+\tau_2)A_1(h+\tau_2)\\ &\times K(T+\tau_2,0)\dd \tau_2\dd \tau_1\Biggr] K(T,s) \\ &=K^T(T,\theta)\Biggl[U_0(0,0) - \int_0^T K^T(\xi,0) W(\xi) K(\xi,0)\dd\xi\Biggr] K(T,s).
\end{align*}
Here, we use the fact that $U_0$ is a solution to the integral equation~(\ref{eq_integral_U_0}).
Combining similarly the remaining terms, we obtain
\begin{align*}
&[\mathcal L U_0] (\theta-T,s-T) = K^T(T,\theta)\Biggl[U_0(0,0) - \int_0^T K^T(\xi,0) W(\xi) K(\xi,0)\dd\xi\Biggr] K(T,s)\\
&+K^T(T,\theta)\int_{-h}^0\Biggl[U_0(0,\tau+h) - \int_0^T\!\!\! K^T(\xi,0) W(\xi) K(\xi,\tau+h)\dd\xi\Biggr]\\ &\times A_1(\tau+h)K(T+\tau,s)\dd\tau
+\int_{-h}^0K^T(T+\tau,\theta)A_1^T(\tau+h)\\ &\times \Biggl[ U_0(\tau+h,0)-\int_0^T\!\!\! K^T(\xi,\tau+h) W(\xi) K(\xi,0)\dd\xi\Biggr]\dd\theta K(T,s)\\
&+\int_{-h}^0 \int_{-h}^0 K^T(T+\tau_1,\theta)A_1^T(\tau_1+h) \Biggl[U_0(\tau_1+h,\tau_2+h)\\ &-\int_0^T K^T(\xi,\tau_1+h) W (\xi)K(\xi,\tau_2+h)\dd\xi\Biggr]A_1(\tau_2+h)K(T+\tau_2,s)\dd\tau_2\dd\tau_1 \\
&=[\mathcal L U_0] (\theta,s) - \widetilde{\Lambda},
\end{align*}
where $\widetilde{\Lambda}$ denotes the rest of the terms. Let us transform $\widetilde{\Lambda}$ using Lemma~\ref{lemma_aux_fund} again:
\begin{align*}
 \widetilde{\Lambda} &= K^T(T,\theta)\int_0^T K^T(\xi,0) W(\xi) \Biggl[ K(\xi,0) K(T,s) \\ &+ \int_{-h}^0 K(\xi,\tau+h)A_1(\tau+h)K(T+\tau,s)\dd\tau\Biggr]\dd\xi \\
 &+\int_{-h}^0 K^T(T+\tau_1,\theta) A_1^T(\tau_1+h) \int_0^T K^T(\xi,\tau_1+h) W(\xi) \\&\times \Biggl[ K(\xi,0) K(T,s)+ \int_{-h}^0 K(\xi,\tau_2+h)A_1(\tau_2+h)K(T+\tau_2,s)\dd\tau_2 \Biggr]\dd\xi\dd\tau_1 \\
 &=K^T(T,\theta)\int_0^T K^T(\xi,0) W(\xi) K(T+\xi,s) \dd\xi\\
 &+\int_{-h}^0 K^T(T+\tau_1,\theta) A_1^T(\tau_1+h) \int_0^T K^T(\xi,\tau_1+h) W(\xi) K(T+\xi,s) \dd\xi\dd\tau_1\\
 &= \int_0^T \Biggl[ K(\xi,0) K(T,\theta) + \int_{-h}^0 K(\xi,\tau_1+h)A_1(\tau_1+h) K(T+\tau_1,\theta) \dd\tau_1 \Biggr]^T\\ &\times W(\xi) K(T+\xi,s) \dd\xi =
 \int_0^T K^T(T+\xi,\theta) W(\xi) K(T+\xi,s) \dd\xi.
\end{align*}
We conclude that
\begin{align*}
U(\theta-T,s-T) &= [\mathcal L U_0] (\theta-T,s-T) + \int_{\theta-T}^{T} K^T(\xi,\theta-T) W(\xi) K(\xi,s-T)\dd \xi \\
&= [\mathcal L U_0] (\theta,s) + \int_{\theta-T}^{0} K^T(T+\xi,\theta) W(\xi) K(T+\xi,s)\dd \xi\\
&= [\mathcal L U_0] (\theta,s) + \int_{\theta}^{T} K^T(\xi,\theta) W(\xi) K(\xi,s)\dd \xi = U(\theta,s).
\end{align*}
Indeed, the function $U$ defined by (\ref{int_eq_extended}) is periodic for any $\theta,s\< T.$
\end{proof}

Next, we study the differential properties of the function $U$ defined by (\ref{int_eq_extended}). To do this, introduce the matrix function
\begin{align*}
G(\theta,s) = K^T(T,\theta)U_0(0,s) + \int_{-h}^0 K^T(T+\tau,\theta)A_1^T(h+\tau)U_0(h+\tau,s)\dd\tau,
\end{align*}
where $\theta\<T,\;s\in[0,h].$
Then, $[\mathcal L U_0] (\theta,s)$ may be rewritten in one of two equivalent ways as
\begin{align}
\label{L_U_s}
[\mathcal L U_0] (\theta,s) &= G(\theta,0)K(T,s) + \int_{-h}^0 G(\theta,\tau+h) A_1(\tau+h) K(T+\tau,s)\dd\tau,
\\
\label{L_U_t}
[\mathcal L U_0] (\theta,s) &= K^T(T,\theta) G^T(s,0) +  \int_{-h}^0 K^T(T+\tau,\theta) A_1^T(\tau+h) G^T(s,h+\tau) \dd\tau
\end{align}
for all $\theta,s\<T.$
Now, the differential properties of the function $U$ defined by (\ref{int_eq_extended}) are summarized in the following lemma.
\begin{lemma}
\label{lemma_derivative_U}
    The matrix function $U(\theta,s)$ defined for $-\infty<\theta,s\<T$ satisfies:

$1)$ for $T-h<\theta\<T,$ $-\infty<s\<T,$
    \begin{align*}
\dfrac{\partial U (\theta,s)}{\partial \theta} &= -A_0^T(\theta) U (\theta,s)- A_1^T(\theta+h) G^T(s,h+\theta-T) - W(\theta) K(\theta,s),
    \end{align*}

$2)$ for $-\infty<\theta\<T-h,$ $-\infty<s\<T,$
\begin{align*}
\dfrac{\partial U (\theta,s)}{\partial \theta} &= -A_0^T(\theta) U (\theta,s)- A_1^T(\theta+h) U(\theta+h,s) - W(\theta) K(\theta,s),
\end{align*}

$3)$ for $T-h<s\<T,$ $-\infty<\theta\<T,$
\begin{align*}
\dfrac{\partial U (\theta,s)}{\partial s} 
&= -U (\theta,s) A_0(s) - G(\theta,s-T+h) A_1(s+h) -K^T(s,\theta)W(s),
\end{align*}

$4)$ for $-\infty<s\<T-h,$ $-\infty<\theta\<T,$
\begin{align*}
\dfrac{\partial U (\theta,s)}{\partial s} 
&= -U (\theta,s) A_0(s) - U(\theta,s+h) A_1(s+h) -K^T(s,\theta)W(s).
\end{align*}
\end{lemma}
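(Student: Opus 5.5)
We differentiate the two pieces of \eqref{int_eq_extended} separately, using representation \eqref{L_U_t} of $[\mathcal L U_0]$ for the $\theta$-derivatives (items 1–2) and \eqref{L_U_s} for the $s$-derivatives (items 3–4). By the symmetry of the two representations (transposing $\theta\leftrightarrow s$, with $G(\theta,\cdot)$ playing the role of $G^T(s,\cdot)$), items 3–4 follow from the same computation as items 1–2. We therefore concentrate on $\partial_\theta$.

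\textbf{The integral term.} Set $J(\theta,s)=\int_\theta^T K^T(\tau,\theta)W(\tau)K(\tau,s)\dd\tau$. Differentiating the lower limit gives $-K^T(\theta,\theta)W(\theta)K(\theta,s)=-W(\theta)K(\theta,s)$. Differentiating under the integral and using \eqref{fund_matr_s} transposed, $\partial_\theta K^T(\tau,\theta)=-A_0^T(\theta)K^T(\tau,\theta)-A_1^T(\theta+h)K^T(\tau,\theta+h)$, we get
$$\partial_\theta J(\theta,s)=-W(\theta)K(\theta,s)-A_0^T(\theta)J(\theta,s)-A_1^T(\theta+h)\int_\theta^T K^T(\tau,\theta+h)W(\tau)K(\tau,s)\dd\tau.$$
Since $K(\tau,\theta+h)=0$ for $\tau<\theta+h$, the last integral equals $J(\theta+h,s)$ when $\theta\<T-h$, and vanishes when $\theta>T-h$. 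The jump of $K$ at $\tau=\theta$ is harmless because it only enters through an integration endpoint.

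\textbf{The $\mathcal L$ term.} In \eqref{L_U_t} the $\theta$-dependence is only through $K^T(T,\theta)$ and $K^T(T+\tau,\theta)$, so by the same transposed \eqref{fund_matr_s},
$$\partial_\theta[\mathcal L U_0](\theta,s)=-A_0^T(\theta)[\mathcal L U_0](\theta,s)-A_1^T(\theta+h)X(\theta,s),$$
where $X$ is the right-hand side of \eqref{L_U_t} with $\theta$ replaced by $\theta+h$ (the point $\theta+h$ may exceed $T$). For $\theta\<T-h$ we have $\theta+h\<T$ and $X=[\mathcal L U_0](\theta+h,s)$. Adding the two pieces gives $-A_1^T(\theta+h)U(\theta+h,s)$, which is item~2.

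For $T-h<\theta\<T$ we have $\theta+h>T$, so $K^T(T,\theta+h)=0$. In the integral only the point $\tau=\theta+h-T\in(0,h]$ survives: $K^T(T+\tau,\theta+h)$ vanishes for $T+\tau<\theta+h$, while for $\tau>\theta+h-T$ it is nonzero. The key point is that $K(T+\tau,\theta+h)$ as a function of $\tau$ has a jump from $0$ to $I$ at $\tau=\theta+h-T$. This produces a Dirac-type contribution, so the naive derivative is not the right object. The identification with $X$ must instead be done by differentiating the variable integration limit directly.

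\textbf{Main obstacle: the boundary term.} Following that approach, for $\theta>T-h$ we rewrite the integral in \eqref{L_U_t} as $\int_{\theta-T}^0$, since $K^T(T+\tau,\theta)=0$ for $\tau<\theta-T$. Differentiating the lower limit yields $-K^T(\theta,\theta)A_1^T(\theta+h-T)G^T(s,\theta+h-T)$. By periodicity, $A_1(\theta+h-T)=A_1(\theta+h)$, so this is exactly $-A_1^T(\theta+h)G^T(s,h+\theta-T)$.

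The interior derivative then gives $-A_0^T(\theta)[\cdot]$ together with terms involving $K^T(T+\tau,\theta+h)$. These vanish for $\tau\<0$ because $T+\tau<\theta+h$ whenever $\theta>T-h$. The same care with integration limits is needed in item~2: for $\theta\<T-h$ the lower limit $\theta-T\<-h$, so there is no boundary term. Combined with the vanishing last integral in $\partial_\theta J$, item~1 follows.

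The step requiring most care is this bookkeeping of where $K$ vanishes and its unit jump at the diagonal, together with justifying differentiation under the integral (continuity of $K$ off the diagonal and boundedness). Items 3–4 are obtained verbatim from \eqref{L_U_s} using $\partial_s K(\cdot,s)$ from \eqref{fund_matr_s}.
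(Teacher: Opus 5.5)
Your argument is correct and is essentially the paper's proof. In both, you differentiate (\ref{L_U_t}) and (\ref{L_U_s}) together with the source term, so that the moving lower limit $\theta-T$ (resp.\ $s-T$) produces the $G$-term when $\theta>T-h$ (resp.\ $s>T-h$), while the terms with $K(\cdot,\theta+h)$ (resp.\ $K(\cdot,s+h)$) vanish there.

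One small correction to your intermediate ``Dirac-type'' paragraph: the jump responsible for that boundary term is the jump of $K^T(T+\tau,\theta)$ in $\theta$ at $\tau=\theta-T\in(-h,0]$. It is not a jump of $K(T+\tau,\theta+h)$ at $\tau=\theta+h-T$; that point lies outside $[-h,0]$, so the $K(T+\tau,\theta+h)$ term is simply identically zero.
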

\begin{proof} First, let us differentiate the formula~(\ref{L_U_t}) with respect to $\theta$ using expression (\ref{fund_matr_s}) for the  derivative of the fundamental matrix with respect to the second argument.

Case~$1).$ $\theta>T-h.$ In this case, $\theta+h>T\>T+\tau,$ $\tau\in[-h,0],$ therefore the term $K(T+\tau,\theta+h)$ in the derivative of $K$ vanishes, and we are left with
$$
\dfrac{\partial K^T(T+\tau,\theta)}{\partial \theta} = -A_0^T(\theta)K^T(T+\tau,\theta),\quad \tau>\theta-T.
$$
Next, the above formula is applicable for $\tau>\theta-T>-h,$ hence we get
\begin{align}
\dfrac{\partial [\mathcal L U_0] (\theta,s)}{\partial\theta} &= -A_0^T(\theta)K^T(T,\theta) G^T(s,0) -A_0^T(\theta)  \int_{\theta-T}^0 K^T(T+\tau,\theta) A_1^T(\tau+h)\notag\\ &\times G^T(s,h+\tau) \dd\tau -A_1^T(\theta-T+h) G^T(s,h+\theta-T)\notag\\
&=-A_0^T(\theta)[\mathcal L U_0](\theta,s)-A_1^T(\theta+h) G^T(s,h+\theta-T),\label{expr_derivative_L}
\end{align}
where we use the periodicity of $A_1.$

Case~$2).$ $\theta\<T-h.$ In this case, the derivative of the fundamental matrix is equal to
$$
\dfrac{\partial K^T(T+\tau,\theta)}{\partial \theta} = -A_0^T(\theta)K^T(T+\tau,\theta)-A_1^T(\theta+h)K^T(T+\tau,\theta+h),
$$
and this formula is applicable for all $\tau>-h,$ since $-h\>\theta-T.$ In this case we get
\begin{align*}
\dfrac{\partial [\mathcal L U_0] (\theta,s)}{\partial\theta} = &-\Bigl[A_0^T(\theta)K^T(T,\theta)+A_1^T(\theta+h)K^T(T,\theta+h)\Bigr] G^T(s,0) \\
&-  \int_{-h}^0 \Bigl[A_0^T(\theta)K^T(T+\tau,\theta)+A_1^T(\theta+h)K^T(T+\tau,\theta+h)\Bigr] A_1^T(\tau+h)\\ &\times G^T(s,h+\tau) \dd\tau
=-A_0^T(\theta)[\mathcal L U_0](\theta,s) - A_1^T(\theta+h)[\mathcal L U_0](\theta+h,s),
\end{align*}
where $[\mathcal L U_0](\theta+h,s)$ is defined because $\theta+h\<T.$

It remains to differentiate the source term in~(\ref{int_eq_extended}):
\begin{align*}
\dfrac{\partial}{\partial \theta} \int_{\theta}^{T} K^T(\xi,\theta) W(\xi) K(\xi,s)\dd \xi &= -\int_{\theta}^{T} \Bigl[A_0^T(\theta)K^T(\xi,\theta)+ A_1^T(\theta+h)K^T(\xi,\theta+h)\Bigr]\\ &\times W(\xi) K(\xi,s)\dd \xi - W(\theta) K(\theta,s).
\end{align*}
Notice that the term with $K^T(\xi,\theta+h)$ vanishes in the first case when $\theta+h>T\>\xi.$
Gathering all terms, we obtain the desired differential properties of $U$ in cases $1)$ and $2).$ It is essential that equation~(\ref{int_eq_extended}) is assumed true as long as $\theta+h\<T,$ i.e. considering it on $[0,h]^2$ would be not enough to finish the proof. 

Similarly, differentiating the formula~(\ref{L_U_s}) with respect to $s$ and using expression (\ref{fund_matr_t}) for the  derivative of the fundamental matrix with respect to the first argument, one can address the cases $3)$ and $4).$ We only remark that, when differentiating with respect to $s,$ the source term should be represented as
\begin{align*}
\int_\theta^{T} K^T(\xi,\theta) W(\xi) K(\xi,s)\dd \xi = \int_s^{T} K^T(\xi,\theta) W(\xi) K(\xi,s)\dd \xi
\end{align*}
due to the properties of the fundamental matrix.
\end{proof}

The cases $2)$ and $4)$ of Lemma~\ref{lemma_derivative_U} provide us with the ready-made dynamic properties of the matrix $U,$ whereas in cases $1)$ and $3)$ the obtained dynamic properties still depend on the auxiliary function $G.$ It turns out, however, that the $G$ corresponds to the same matrix function $U$ with the first argument shifted. An explicit connection between $U$ and $G$ is established in the next lemma.

\begin{lemma}
    \label{lemma_connection_U_G}
    The function $G$ is connected with $U$ as follows:
$$
G(\theta,s) = U(\theta-T,s)\quad \forall\; \theta\<T,\; s\in [0,h].
$$
\end{lemma}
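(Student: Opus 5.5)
The plan is to compute $U(\theta-T,s)$ directly from its definition~(\ref{int_eq_extended}) and to reach $G(\theta,s)$ by a chain of equalities. For $s\in[0,h]$, the two copies of $U_0$ that appear in $G(\theta,s)$, namely $U_0(0,s)$ and $U_0(h+\tau,s)$ with $\tau\in[-h,0]$, have both arguments in $[0,h]$. We may therefore replace each of them by the right-hand side of the integral equation~(\ref{eq_integral_U_0}):
$$
U_0(\sigma,s)=[\mathcal L U_0](\sigma,s)+\int_0^T K^T(\xi,\sigma)W(\xi)K(\xi,s)\dd\xi,\quad \sigma\in[0,h].
$$
This splits $G(\theta,s)$ into an ``$\mathcal L$-part'' and a ``$W$-part''. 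We then show that these coincide with $[\mathcal L U_0](\theta-T,s)$ and with the source term of $U(\theta-T,s)$, respectively. The mechanism is the same as in the proof of Lemma~\ref{lemma_int_eq_periodicity}, run in the reverse direction.

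For the $\mathcal L$-part, we write $[\mathcal L U_0](\sigma,s)$ in the form~(\ref{L_U_t}), which isolates the dependence on the first argument in the factors $K^T(T,\sigma)$ and $K^T(T+\tau',\sigma)$. After substitution and an exchange of the order of integration, the coefficients of $G^T(s,0)$ and $G^T(s,h+\tau')$ become
$$
K^T(T,\theta)K^T(T+\tau',0)+\int_{-h}^0 K^T(T+\tau,\theta)A_1^T(\tau+h)K^T(T+\tau',\tau+h)\dd\tau,\quad \tau'\in[-h,0].
$$
We recognise this as the transpose of Lemma~\ref{lemma_aux_fund}, applied with intermediate point $\xi=0$ and the pair $(T+\tau',\theta-T)$. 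We also use periodicity, which gives $K(0,\theta-T)=K(T,\theta)$ and $K(\tau,\theta-T)=K(T+\tau,\theta)$. Hence the combination equals $K^T(T+\tau',\theta-T)$. The ordering hypothesis of Lemma~\ref{lemma_aux_fund} holds because $T+\tau'\ge T-h\ge 0\ge \theta-T$, using $T\ge h$ and $\theta\le T$. Plugging this back into~(\ref{L_U_t}) yields exactly $[\mathcal L U_0](\theta-T,s)$.

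The $W$-part is handled the same way. It equals
$$
K^T(T,\theta)\int_0^T K^T(\xi,0)W(\xi)K(\xi,s)\dd\xi+\int_{-h}^0K^T(T+\tau,\theta)A_1^T(\tau+h)\int_0^TK^T(\xi,\tau+h)W(\xi)K(\xi,s)\dd\xi\dd\tau .
$$
We pull $\int_0^T\cdots W(\xi)K(\xi,s)\dd\xi$ outside and apply Lemma~\ref{lemma_aux_fund} again, now to $K(\xi,\theta-T)$ with intermediate point $0$; this is valid since $\xi\ge0\ge\theta-T$. The expression collapses to $\int_0^T K^T(\xi,\theta-T)W(\xi)K(\xi,s)\dd\xi$. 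Finally, the source term in~(\ref{int_eq_extended}) is $\int_{\theta-T}^T K^T(\xi,\theta-T)W(\xi)K(\xi,s)\dd\xi$. Since $K(\xi,s)=\mathbb O$ for $\xi<s$ and $s\ge0$, its lower limit can be raised to $0$, so it agrees with what we obtained. Adding the two parts gives $G(\theta,s)=U(\theta-T,s)$.

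The main obstacle is the bookkeeping in the $\mathcal L$-part. We must choose the representation~(\ref{L_U_t}), rather than~(\ref{L_U_s}), so that Lemma~\ref{lemma_aux_fund} can act on the first argument. We also have to check carefully that every instance of Lemma~\ref{lemma_aux_fund} satisfies $t\ge\xi\ge s$, in particular at the endpoint $\tau'=-h$ where we need $T\ge h$. Finally, we must handle the boundary pieces in which the fundamental matrices vanish identically, so that the integration limits can be extended or trimmed consistently.
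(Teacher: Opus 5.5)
Your proof is correct and is essentially the paper's argument run in the opposite direction. The paper starts from $U(\theta-T,s)$ and uses representation~(\ref{L_U_t}) together with Lemma~\ref{lemma_aux_fund} (intermediate point $0$, plus periodicity of $K$) to rewrite $[\mathcal L U_0](\theta-T,s)=K^T(0,\theta-T)[\mathcal L U_0](0,s)+\int_{-h}^0 K^T(\xi,\theta-T)A_1^T(\xi+h)[\mathcal L U_0](\xi+h,s)\dd\xi$; it then substitutes the integral equation~(\ref{eq_integral_U_0}), collapses the $W$-terms in the same way, and drops the source integral over $[\theta-T,0]$ because it vanishes for $s\ge 0$. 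You start from $G(\theta,s)$ and apply exactly these substitutions and collapses in reverse order.
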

\begin{proof}
Based on the definition of the function $G,$ we would like to prove that
\begin{align*}
U(\theta-T,s) = K^T(T,\theta)U_0(0,s) + \int_{-h}^0 K^T(T+\tau,\theta)A_1^T(h+\tau)U_0(h+\tau,s)\dd\tau 
\end{align*}
for all $\theta\<T$ and $s\in [0,h].$
To do this, consider the integral equation~(\ref{int_eq_extended}) for $U(\theta-T,s):$
\begin{align}
\label{aux_proof_1}
U(\theta-T,s) &= [\mathcal L U_0](\theta-T,s) + \int_{\theta-T}^{T} K^T(\xi,\theta-T) W(\xi) K(\xi,s)\dd \xi.
\end{align}
Now, we use representation~(\ref{L_U_t}) of $\mathcal L$ to write
\begin{align*}
[\mathcal L U_0] (\theta-T,s) &= K^T(T,\theta-T) G^T(s,0) \\ &+  \int_{-h}^0 K^T(T+\tau,\theta-T) A_1^T(\tau+h) G^T(s,h+\tau) \dd\tau.
\end{align*}
Similarly to the proof of Lemma~\ref{lemma_int_eq_periodicity}, we again use Lemma~\ref{lemma_aux_fund} written as
\begin{align*}
K(T+\tau,\theta-T) &= K(T+\tau,0)K(0,\theta-T) \\ &+ \int_{-h}^0 K(T+\tau,\xi+h)A_1(\xi+h)K(\xi,\theta-T)\dd\xi,
\end{align*}
where $T+\tau\>0$ and $\theta-T\<0.$ Substituting and rearranging the terms, we get the following expression:
\begin{align*}
[\mathcal L U_0] (\theta-T,s) &= K^T(0,\theta-T)\biggl[ K^T(T,0)G^T(s,0) \\+ &  \int_{-h}^0  K^T(T+\tau,0)A_1^T(\tau+h) G^T(s,h+\tau) \dd\tau \biggr]\\
&+\int_{-h}^0 K^T(\xi,\theta-T) A_1^T(\xi+h)  \biggl[ K^T(T,\xi+h) G^T(s,0) \\ &+ \int_{-h}^0  K^T(T+\tau,\xi+h) A_1^T(\tau+h) G^T(s,h+\tau) \dd\tau \biggr]  \dd\xi.
\end{align*}
Using~(\ref{L_U_t}) again, we write
\begin{align*}
[\mathcal L U_0] (\theta-T,s) &= K^T(0,\theta-T) [\mathcal L U_0] (0,s) \\ &+ \int_{-h}^0 K^T(\xi,\theta-T) A_1^T(\xi+h)[\mathcal L U_0] (\xi+h,s)\dd\xi.
\end{align*}
We proceed by using the equation~(\ref{eq_integral_U_0}) since both $\xi+h,s\in[0,h]:$
\begin{align*}
[\mathcal L U_0] (0,s) &= U_0(0,s) - \int_0^{T} K^T(\tau,0) W(\tau) K(\tau,s)\dd \tau,\\
[\mathcal L U_0] (\xi+h,s) &=  U_0(\xi+h,s) -\int_{0}^{T} K^T(\tau,\xi+h) W (\tau)K(\tau,s)\dd \tau.
\end{align*}
Hence,
\begin{align*}
[\mathcal L U_0] (\theta-T,s) &= K^T(0,\theta-T) U_0 (0,s) \\ &+ \int_{-h}^0 K^T(\xi,\theta-T) A_1^T(\xi+h) U_0 (\xi+h,s)\dd\xi-\Psi,\\
\Psi&= \int_{0}^{T} \biggl[ K^T(0,\theta-T) K^T(\tau,0)  \\ &+ \int_{-h}^0 K^T(\xi,\theta-T) A_1^T(\xi+h) K^T(\tau,\xi+h) \dd\xi \biggr]
W(\tau) K(\tau,s)\dd \tau.
\end{align*}
By employing Lemma~\ref{lemma_aux_fund} again and since $\tau\>0,$ we conclude that
\begin{align*}
\Psi&= \int_{0}^{T} K^T(\tau,\theta-T) W(\tau) K(\tau,s)\dd \tau.
\end{align*}
Finally, we substitute the obtained expressions in (\ref{aux_proof_1}):
\begin{align*}
U(\theta-T,s) &= K^T(0,\theta-T) U_0 (0,s) + \int_{-h}^0 K^T(\xi,\theta-T) A_1^T(\xi+h) U_0 (\xi+h,s)\dd\xi \\ &- \int_{0}^{T} K^T(\tau,\theta-T) W(\tau) K(\tau,s)\dd \tau + \int_{\theta-T}^{T} K^T(\tau,\theta-T) W(\tau) K(\tau,s)\dd \tau\\
&=K^T(T,\theta) U_0 (0,s) + \int_{-h}^0 K^T(T+\xi,\theta) A_1^T(\xi+h) U_0 (\xi+h,s)\dd\xi \\
&+\int_{\theta-T}^{0} K^T(\tau,\theta-T) W(\tau) K(\tau,s)\dd \tau,\quad \theta\<T,\quad s\in[0,h].
\end{align*}
It remains to observe that the last integral vanishes for $s\>0.$ The desired expression for $U(\theta-T,s)$ is obtained.
\end{proof}

The results of this section lead us to the following conclusion.

\begin{theorem}
\label{thm_integral_eq}
  Let $U_0\in \mathcal C([0,h]^2,\R^{n\times n})$ be a solution to the integral equation~\textup{(\ref{eq_integral_U_0})} such that $U_0(\theta,s)=U_0^T(s,\theta).$ Then, the matrix function defined as
  \begin{align*}
U(\theta,s)  &= [\mathcal L U_0 ](\theta,s) + \int_{\theta}^T K^T(\tau,\theta) W(\tau)  K(\tau,s) \dd \tau,\quad\text{if}\quad \theta,s\<T,\\
U(\theta,s)  &= U(\theta-T,s-T)\quad \text{otherwise},
\end{align*}
where $\,\mathcal L\!:\, \mathcal C([0,h]^2,\R^{n\times n})\to \mathcal C((-\infty,T]^2,\R^{n\times n}),$
is the delay Lyapunov matrix of system~\textup{(\ref{sys_delay_periodic})} associated with $W(\tau)$ for all $\theta,s\in\R.$
\end{theorem}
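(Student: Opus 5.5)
We have to check the four properties of Definition~\ref{def_Lyap_matrix} for the function $U$ defined in the statement: PDE, symmetry, ODE, and periodicity.

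\emph{Periodicity and well-definedness.} Periodicity is essentially built in. Lemma~\ref{lemma_int_eq_periodicity} shows that formula~(\ref{int_eq_extended}) is consistent with the shift $(\theta,s)\mapsto(\theta-T,s-T)$ on $(-\infty,T]^2$. Hence defining $U(\theta,s)=U(\theta-T,s-T)$ outside $(-\infty,T]^2$ (iterated as often as needed) is unambiguous, and gives $U(\theta+T,s+T)=U(\theta,s)$ on all of $\R^2$. Continuity on $(-\infty,T]^2$ follows from continuity of $K$ away from the diagonal jumps. The jump of $K$ at $t=s$ is compensated inside the integrals, and the combination $U_0$-terms plus the source term is continuous; I would check this directly. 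Continuity then transfers to $\R^2$ by periodicity.

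\emph{Symmetry.} I would show $U(\theta,s)=U^T(s,\theta)$ for $\theta,s\<T$ and extend it by periodicity. From (\ref{operator_L}) and $U_0(\theta,s)=U_0^T(s,\theta)$, swapping $\xi_1\leftrightarrow\xi_2$ gives $[\mathcal L U_0](\theta,s)=[\mathcal L U_0]^T(s,\theta)$. For the source term, $K(\tau,\cdot)$ vanishes for $\tau<\cdot$, so $\int_\theta^T=\int_{\max(\theta,s)}^T$, which is symmetric.

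\emph{PDE and ODE properties.} Here I use Lemmas~\ref{lemma_derivative_U} and~\ref{lemma_connection_U_G}. Take $\tau<s$, and use periodicity to place $(\tau,s)$ in $(-\infty,T]^2$ with $s\<T$.
\begin{itemize}
\item If $\tau\<T-h$, case 2) gives the PDE, because $W(\tau)K(\tau,s)=0$ for $\tau<s$.
\item If $\tau>T-h$, case 1) together with Lemma~\ref{lemma_connection_U_G} gives $G^T(s,h+\tau-T)=U^T(s-T,\tau+h-T)$. By periodicity this equals $U^T(s,\tau+h)$, and by symmetry it equals $U(\tau+h,s)$. Here $\tau+h-T\in[0,h]$, as Lemma~\ref{lemma_connection_U_G} requires.
\end{itemize}
So in both cases $\partial_\tau U(\tau,s)=-A_0^T(\tau)U(\tau,s)-A_1^T(\tau+h)U(\tau+h,s)$ for $s>\tau$.

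For the ODE property, write $\frac{\dd}{\dd\tau}U(\tau,\tau)$ as the sum of the one-sided partial derivatives in each variable. This requires justifying the chain rule across the diagonal, where $K$ has a jump. Use the $\theta$-derivative formula evaluated at $s=\tau$, where $W(\tau)K(\tau,\tau)=W(\tau)$ appears once. Use the $s$-derivative from cases 3)/4), where $K^T(s,\theta)W(s)$ at $s=\theta=\tau$ contributes the other $W$. Cases 3) and 1) are again rewritten via Lemma~\ref{lemma_connection_U_G} and symmetry to give $U(\tau,\tau+h)A_1(\tau+h)$.

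The main obstacle is this diagonal bookkeeping. The $W$ term must be counted exactly once, not twice: one derivative should be taken from the side where $K(\tau,s)$ with $\tau<s$ is zero. So I would compute $\lim_{\eps\to0}[U(\tau+\eps,\tau+\eps)-U(\tau,\tau)]/\eps$ by splitting it as $[U(\tau+\eps,\tau+\eps)-U(\tau,\tau+\eps)]+[U(\tau,\tau+\eps)-U(\tau,\tau)]$. The first difference uses the $\theta$-formula with $\theta\<s$, so $K(\theta,s)$ is $I$ only at the endpoint and contributes $-W$. The second uses the $s$-formula with $s>\theta$, so $K^T(s,\theta)W(s)=0$. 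Continuity of the partial derivatives up to the diagonal then yields the ODE property with a single $-W(\tau)$.
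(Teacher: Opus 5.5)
Your route is the paper's: periodicity from Lemma~\ref{lemma_int_eq_periodicity}, symmetry from $U_0=U_0^T$ and the structure of $\mathcal L$ and the source term, the PDE property from cases 1) and 2) of Lemma~\ref{lemma_derivative_U}, and the ODE property from one-sided partial derivatives at the diagonal. For the PDE property you combine Lemma~\ref{lemma_derivative_U} with Lemma~\ref{lemma_connection_U_G}, periodicity and symmetry. The periodicity, symmetry and PDE parts are fine. You even make explicit the identification $G^T(s,h+\tau-T)=U(\tau+h,s)$, which the paper leaves to the reader. However, the step you yourself single out as the main obstacle, the diagonal bookkeeping, is argued incorrectly in both halves. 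Your final single $-W(\tau)$ comes out right only because the two errors cancel.

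Recall that $K(t,s)=\mathbb{O}$ for $t<s$.
\begin{itemize}
\item In the first bracket $U(\tau+\eps,\tau+\eps)-U(\tau,\tau+\eps)$, $\theta$ runs over $[\tau,\tau+\eps]$ with $\theta\le s=\tau+\eps$. So $K(\theta,s)=\mathbb{O}$ except at the endpoint $\theta=s$. A single endpoint contributes nothing to $\eps^{-1}\int_\tau^{\tau+\eps}\partial_\theta U\,\dd\theta$. What matters is the limit of $\partial_\theta U$ from the side $\theta<s$, and that limit has no $W$-term. So this bracket gives $-A_0^T(\tau)U(\tau,\tau)-A_1^T(\tau+h)U(\tau+h,\tau)$.
\item In the second bracket $U(\tau,\tau+\eps)-U(\tau,\tau)$ you have $s\ge\theta=\tau$, and there $K^T(s,\theta)$ is \emph{not} zero. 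It is the fundamental matrix on its support and tends to $K(\tau,\tau)=I$. So the term $-K^T(s,\tau)W(s)$ in cases 3)/4) tends to $-W(\tau)$.
\end{itemize}
The correct bookkeeping is therefore exactly the paper's identity $\frac{\dd}{\dd\tau}U(\tau,\tau)=\lim_{s\to\tau+0}\partial_\theta U(\tau,s)+\lim_{s\to\tau+0}\partial_s U(\tau,s)$. In your splitting the single $-W$ comes from the $s$-derivative, not from the $\theta$-derivative. With this correction, and noting that $U(\cdot,s)$ is continuous and piecewise $C^1$ so the fundamental theorem of calculus applies across the kink, your argument is complete.
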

\begin{proof}
The symmetry and periodicity properties are obvious from the conditions of the theorem and Lemma~\ref{lemma_int_eq_periodicity}. Next, taking into account Lemma~\ref{lemma_connection_U_G} and the periodicity condition, we note that the dynamic equations in Cases~$1)$ and $2)$ (respectively, $3)$ and $4)$) of Lemma~\ref{lemma_derivative_U} coincide. Hence, the dynamic property of Definition~\ref{def_Lyap_matrix} follows immediately from Cases~$1),$ $2)$ of Lemma~\ref{lemma_derivative_U} considering $s>\theta.$ To verify the algebraic property of Definition~\ref{def_Lyap_matrix}, consider $s\to\theta+0$ and $s\to\theta-0$ in the differential properties of Lemma~\ref{lemma_derivative_U}:
\begin{align*}
\dfrac{\partial U (\theta,s)}{\partial \theta}\Biggl|_{s\to\theta+0} &= -A_0^T(\theta) U (\theta,\theta)- A_1^T(\theta+h) U(\theta+h,\theta),\\
\dfrac{\partial U (\theta,s)}{\partial \theta}\Biggl|_{s\to\theta-0} &= -A_0^T(\theta) U (\theta,\theta)- A_1^T(\theta+h) U(\theta+h,\theta) - W(\theta),\\
\dfrac{\partial U (\theta,s)}{\partial s}\Biggl|_{s\to\theta+0} 
&= -U (\theta,\theta) A_0(\theta) - U(\theta,\theta+h) A_1(\theta+h) -W(\theta),\\
\dfrac{\partial U (\theta,s)}{\partial s} \Biggl|_{s\to\theta-0}
&= -U (\theta,\theta) A_0(\theta) - U(\theta,\theta+h) A_1(\theta+h).
\end{align*}
Writing now 
  $$
\dfrac{\dd U(\theta,\theta)}{\dd\theta} = \lim_{s\to\theta+0}\dfrac{\partial U(\theta,s)}{\partial\theta} + \lim_{s\to\theta+0}\dfrac{\partial U(\theta,s)}{\partial s}= \lim_{s\to\theta-0}\dfrac{\partial U(\theta,s)}{\partial\theta} + \lim_{s\to\theta-0}\dfrac{\partial U(\theta,s)}{\partial s},
  $$
we get the desired ODE property from Definition~\ref{def_Lyap_matrix}.
\end{proof}

On the other hand, it follows from the integral properties presented in \cite{Gomez2016} that the delay Lyapunov matrix in the sense of Definition~\ref{def_Lyap_matrix} satisfies both integral equation~\textup{(\ref{eq_integral_U_0})} and Theorem~\ref{thm_integral_eq}. Hence, Theorem~\ref{thm_integral_eq} may serve as an alternative definition of the delay Lyapunov matrix.

\begin{remark}
\label{remark_consistence_U_tilde_U}
An extension of the integral equation~\textup{(\ref{eq_integral_U_0})} presented in this section is consistent with the definition of the matrix function $\widetilde U$ given in Section~\ref{sec_P_t} when we calculated the value of the operator $\mathcal P (t).$ Indeed, for $\theta,s\in [0,T]$ this is just the same extension, and it remains to justify that $\widetilde U$ defined in the domains $(B)$ and $(C)$ coincides with $U$ defined by Theorem~\ref{thm_integral_eq}. That is, considering $(C)$ we need to show that
\begin{align*}
U(\xi-T,\eta-T)  &= K^T(T,\xi) U_0(0,\eta-T) \\ &+ \int_{-h}^0 K^T(T+\tau,\xi)A_1^T(h+\tau) U_0(h+\tau,\eta-T)\dd\tau,
\end{align*}
where $\xi-T\<0,$ $\eta-T\in[0,h],$ and $U$ is defined by~(\ref{int_eq_extended}). However, the expression on the right-hand side of the above equation is nothing else but $G(\xi,\eta-T).$ Hence, the desired connection has already been proven in Lemma~\ref{lemma_connection_U_G}.
\end{remark}

\begin{remark}
     In view of Theorem~\ref{thm_integral_eq} and Remark~\ref{remark_consistence_U_tilde_U}, we conclude that the Lyapunov functional $\inner{\ph}{\mathcal P(t)\ph}$ given by~(\ref{formula_v_t_R}) which we constructed for $t\in[0,T]$ in Section~\ref{sec_P_t} coincides as expected with the functional (\ref{func_v_0_old_old}) defined through the derivative condition~(\ref{derivative_v_0}) along the solutions of system~(\ref{sys_delay_periodic}).
\end{remark}

\begin{remark}
Originally, the delay Lyapunov matrix was defined in \cite{LetZhab2009, ZhabLet2009IFAC} for exponentially stable systems as
    $$
U(\theta,s)=\int_{\max\{\theta,s\}}^{+\infty} K^T(\xi,\theta) W(\xi) K(\xi,s)\dd\xi.
    $$
The results of this section show that the integral equation~\textup{(\ref{eq_integral_U_0})} together with its extension (\ref{int_eq_extended}) capture a lot of properties of this integral without assuming the exponential stability.
\end{remark}

\subsection{Existence and uniqueness issue for equation~(\ref{eq_integral_U_0})} This section is devoted to studying the existence and uniqueness of solutions to the integral equation~(\ref{eq_integral_U_0}) considered on the Banach space
$$
\mathcal X = \Bigl\{U\in\mathcal C([0,h]^2,\R^{n\times n})\;\Bigl|\; U(\theta,s)=U^T(s,\theta)\quad \forall\,\theta,s\in[0,h]\Bigr\}
$$
supplied with the standard uniform norm.
We start with the following result. 
\begin{lemma}
\label{lemma_L_compact}
    The operator $\mathcal L\!\!: \,\mathcal X\to \mathcal X$ defined by \textup{(\ref{operator_L})} is a compact linear bounded operator on $\mathcal X.$
\end{lemma}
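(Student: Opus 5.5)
Since $\mathcal L$ is visibly linear, the plan has three steps: show that $\mathcal L$ maps $\mathcal X$ into $\mathcal X$ (symmetry and continuity are preserved), obtain a uniform bound, and prove that the image of the unit ball is equicontinuous. Compactness will then follow from the Arzel\`a--Ascoli theorem on the compact set $[0,h]^2$.

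\emph{Symmetry and boundedness.} Transposing (\ref{operator_L}) and using $U_0(\theta,s)=U_0^T(s,\theta)$ shows that the first and fourth terms are mapped into themselves, while the second and third terms are exchanged. Hence $[\mathcal L U_0](s,\theta)=[\mathcal L U_0]^T(\theta,s)$. For boundedness, note that $K(t,s)$ is bounded on the compact set $\{(t,s):\,T-h\<t\<T,\ 0\<s\<h\}$; it is piecewise continuous there, with its only jump at $t=s$ coming from $K(s,s)=I$ and $K(t,s)=\mathbb O$ for $t<s$. The matrix $A_1$ is continuous, hence bounded. Every term of $\mathcal L U_0$ is therefore bounded by $C\|U_0\|$, with $C$ depending only on $h$, $T$, $\sup\|K\|$ and $\sup\|A_1\|$.

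\emph{Equicontinuity and continuity of the image.} The key observation is that all dependence on $(\theta,s)$ enters only through the factors $K^T(T,\theta)$, $K^T(T+\xi,\theta)$ on the left and $K(T,s)$, $K(T+\xi,s)$ on the right; the quantities built from $U_0$ are fixed matrices or bounded integrands. We therefore write
\[
[\mathcal L U_0](\theta,s)=\mathcal K_1(\theta)^T\,\mathcal M(U_0)\,\mathcal K_2(s),
\]
where $\mathcal K_2(s)=\bigl(K(T,s),\,K(T+\cdot,s)\bigr)$ is viewed as an element of $\C^{n\times n}\times L^2([-h,0])$ (and similarly for $\mathcal K_1$), and $\mathcal M(U_0)$ is a bounded operator with norm at most $C\|U_0\|$. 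It then suffices to prove that $s\mapsto K(T,s)$ is continuous on $[0,h]$ and that $s\mapsto \int_{-h}^0\|K(T+\xi,s)-K(T+\xi,s')\|\dd\xi$ tends to $0$ as $s'\to s$, uniformly in $s\in[0,h]$.

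The first fact holds because $T\>h$ gives $T\>s$, so $K(T,\cdot)$ is differentiable in $s$ by (\ref{fund_matr_s}). The second fact holds because $K(t,s)$ is continuous in $(t,s)$ away from the diagonal $t=s$ and uniformly bounded. The diagonal meets the integration range only when $T+\xi=s$, that is, at a single point $\xi$; this yields a uniform $L^1$-modulus. With this argument the moduli of continuity of $\mathcal L U_0$ in $\theta$ and $s$ are bounded by $\|U_0\|$ times a fixed function tending to $0$. The same argument also shows that $\mathcal L U_0$ is continuous, so the image indeed lies in $\mathcal X$.

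The main obstacle is this last step, namely the diagonal discontinuity of $K$ in the case $T=h$, where $T+\xi$ sweeps over the interval $[0,h]$ that contains $s$. I would handle it by splitting the $\xi$-integral into a small neighbourhood of $\xi=s-T$, which contributes $O(\delta)$ by boundedness, and its complement, where $K$ is uniformly continuous. When $T>h$ the difficulty disappears, since then $T+\xi\>T-h>0$ does not automatically stay off the diagonal, but $K$ restricted to $t\>s$ is continuous and the same splitting still works. Arzel\`a--Ascoli then gives relative compactness of the image of the unit ball of $\mathcal X$, so $\mathcal L$ is a compact linear bounded operator.
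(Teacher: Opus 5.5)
Your proof is correct, but it reaches equicontinuity by a different route than the paper.

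\emph{The paper's route.} The paper also uses Arzel\`a--Ascoli. It gets equicontinuity from a Lipschitz bound: it differentiates $\mathcal L U$ in $\theta$ (and in $s$) with the formulas from the proof of Lemma~\ref{lemma_derivative_U}. These are (\ref{expr_derivative_L}) for $\theta>T-h$, and the right-hand side of (\ref{L_U_t}) evaluated at $\theta+h$ for $\theta\le T-h$. It then bounds the derivative by $M^2u(1+a_1h)^2(a_0+a_1)$.

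\emph{Your route.} You write $[\mathcal L U_0](\theta,s)$ as a bounded bilinear pairing of $\bigl(K(T,\theta),K(T+\cdot,\theta)\bigr)$ and $\bigl(K(T,s),K(T+\cdot,s)\bigr)$, with a kernel of size $O(\|U_0\|)$. The uniform modulus of continuity then follows from three facts: $K$ is bounded, $K$ is jointly continuous on $\{t\ge s\}$, and $\xi\mapsto K(T+\xi,s)$ is $L^1$-continuous in $s$ across its moving jump at $\xi=s-T$. The joint continuity follows, for instance, from the bounded partial derivatives in (\ref{fund_matr_t})--(\ref{fund_matr_s}).

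\emph{What each buys.} The paper's route gives an explicit Lipschitz constant and reuses the computations of Lemma~\ref{lemma_derivative_U}. Yours is more elementary. It never differentiates through the jump of $K$, which is exactly what produces the boundary term $A_1^T(\theta+h)G^T(s,h+\theta-T)$ in the paper's computation. It also checks explicitly that $\mathcal L$ preserves symmetry and continuity, i.e.\ that $\mathcal L$ really maps $\mathcal X$ into $\mathcal X$; the paper leaves this implicit.

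\emph{Two small points to tidy.}
\begin{itemize}
\item Your sentence about $T>h$ contradicts itself. The diagonal $T+\xi=s$ meets the range $\xi\in[-h,0]$, $s\in[0,h]$ for every $T<2h$, not only for $T=h$. Since you apply the splitting argument in every case anyway, nothing breaks.
\item You set up the pairing in $L^2$ but measure $\mathcal K_2(s)-\mathcal K_2(s')$ by an $L^1$-modulus. It is cleaner to pair with the bounded kernel as $L^1$ against $L^\infty$. Alternatively, note that boundedness of $K$ turns the $L^1$-modulus into an $L^2$-modulus.
\end{itemize}
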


\begin{proof}
    Using the classical Arzelà--Ascoli argument \cite{KolmFomin}, we need to prove that, given a bounded sequence $\{U_k\}_{k\in\mathbb{N}}\subset \mathcal X,$ the corresponding sequence  $\{\mathcal L U_k\}_{k\in\mathbb{N}}$ is uniformly bounded and uniformly equicontinuous. Since in our case the value $[\mathcal L U](\theta,s)$ is differentiable with respect to both variables, see Lemma~\ref{lemma_derivative_U}, it is enough to show that, given a bounded $U\in \mathcal X,$ both $[\mathcal L U](\theta,s)$ itself and its derivatives  with respect to $\theta$ and $s$ are bounded.
    Clearly, there exists a constant $M>0$ such that $\|K(\theta,s)\|\<M$ for $\theta,s\in[0,T].$ Denote $a_0=\max_{t\in\R} \|A_0(t)\|,$ $a_1=\max_{t\in\R} \|A_1(t)\|.$ Then, given a $U\in\mathcal X$ with $\|U(\theta,s)\|\<u$ $(u>0),$ we get $\|G(\theta,s)\|\<M u (1+a_1 h)$ and $\|[\mathcal L U](\theta,s)\|\<M^2 u (1+a_1 h)^2$ for $\theta,s\in[0,h].$ Then, if $\theta>T-h,$ expression (\ref{expr_derivative_L}) allows us to conclude immediately that $\left\|\frac{\partial \mathcal L U}{\partial\theta}\right\|$ is bounded for $\theta,s\in[0,h].$
    If $\theta\in[0,T-h]$ then, although the notation $[\mathcal L U](\theta+h,s)$ is not always well-defined when $\mathcal L$ acts from $\mathcal X$ to $\mathcal X,$ the right-hand side of the formula~(\ref{L_U_t}) is still well-defined, therefore we get 
    \begin{align*}
\left\|\dfrac{\partial [\mathcal L U] (\theta,s)}{\partial\theta}\right\| &\< a_0\|[\mathcal L U](\theta,s)\|+a_1\biggl\|K^T(T,\theta+h) G^T(s,0) \\
&+\int_{-h}^0 K^T(T+\tau,\theta+h) A_1^T(\tau+h) G^T(s,h+\tau) \dd\tau \biggr\|\\
&\<M^2 u (1+a_1 h)^2(a_0+a_1).
\end{align*}
The derivative of $\mathcal L$ with respect to $s$ can be treated similarly.
\end{proof}
The integral equation~(\ref{eq_integral_U_0}) on the Banach space $\mathcal X$ can be written as
\begin{align*}
U_0 &= \mathcal L U_0 + I_W,\quad
I_W(\theta,s) =\int_0^{T} K^T(\xi,\theta) W(\xi) K(\xi,s)\dd \xi.\notag
\end{align*}
An important consequence of Lemma~\ref{lemma_L_compact} is that the Fredholm theory can be applied to this equation \cite{Brezis}.

\begin{corollary} \textup{(\textbf{Fredholm alternative}, \textup{\cite{Brezis}}, Theorem 6.6)} One of the following two alternatives holds.
\label{corollary Fredholm_alternative}

$(i)$ There exists a unique solution $U_0\in\mathcal X$ to equation~\textup{(\ref{eq_integral_U_0})}, equivalently, $1\notin\sigma(\mathcal L).$

$(ii)$ There exists a nontrivial $U_0\in\mathcal X$ such that $U_0 = \mathcal L U_0,$ equivalently, $1\in\sigma(\mathcal L).$ In this case, there is either no or infinitely many solutions to equation~\textup{(\ref{eq_integral_U_0})}.
    \end{corollary}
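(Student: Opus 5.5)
The corollary is an application of the Fredholm alternative for compact perturbations of the identity (\cite{Brezis}, Theorem 6.6) to the operator $I-\mathcal L$ on $\mathcal X$. Its hypotheses are supplied by Lemma~\ref{lemma_L_compact}, so the work consists of three checks.

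\textbf{Step 1: $\mathcal X$ is a Banach space.} The symmetric functions $U(\theta,s)=U^T(s,\theta)$ form a closed subspace of $\mathcal C([0,h]^2,\R^{n\times n})$ under the uniform norm. Hence $\mathcal X$ is complete. We regard it as a real Banach space, since $K$, $A_1$ and $W$ are real.

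\textbf{Step 2: $\mathcal L$ and $I_W$ map into $\mathcal X$.} For $\mathcal L$, transpose (\ref{operator_L}), swap $\theta\leftrightarrow s$ and $\xi_1\leftrightarrow\xi_2$, and use $U(\theta,s)=U^T(s,\theta)$. The first and fourth terms map to themselves, and the second and third are exchanged. So $[\mathcal L U](\theta,s)=[\mathcal L U]^T(s,\theta)$. For $I_W$, symmetry follows from $W=W^T$. Continuity of $I_W$ follows from continuity of $K(\xi,\cdot)$ away from $\xi=s$, where $K$ has only an integrable jump.

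\textbf{Step 3: apply Brezis, Theorem 6.6.} Lemma~\ref{lemma_L_compact} gives that $\mathcal L:\mathcal X\to\mathcal X$ is compact. The theorem then states that $N(I-\mathcal L)=\{0\}$ if and only if $R(I-\mathcal L)=\mathcal X$.

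If $N(I-\mathcal L)=\{0\}$, then $I-\mathcal L$ is bijective, with bounded inverse by the open mapping theorem. Therefore $U_0=(I-\mathcal L)^{-1}I_W$ is the unique solution, and $1\notin\sigma(\mathcal L)$. Conversely, for compact $\mathcal L$ every nonzero point of the spectrum is an eigenvalue. So $1\notin\sigma(\mathcal L)$ is the same as having trivial kernel, and both formulations of $(i)$ agree.

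Otherwise some nontrivial $U_0$ satisfies $U_0=\mathcal L U_0$, so $1\in\sigma(\mathcal L)$. In that case $R(I-\mathcal L)=N(I-\mathcal L^\star)^\perp$ is a proper closed subspace, of the same finite codimension as $\dim N(I-\mathcal L)$. If $I_W$ lies outside the range there is no solution. If it lies inside, a particular solution plus any element of the nontrivial kernel is again a solution, which gives infinitely many.

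\textbf{Main obstacle.} The subtle point is Step 2, specifically that $\mathcal L$ preserves symmetry, so that the Fredholm theory can be used on $\mathcal X$ rather than on the full continuous space. The transposition argument above is what I would write out.
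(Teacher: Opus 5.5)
Your proposal is correct and follows the paper's route. The paper gives no separate proof: it applies Brezis's Theorem~6.6 to $I-\mathcal L$ on $\mathcal X$, with compactness taken from Lemma~\ref{lemma_L_compact}, which is exactly your Step~3. Your Steps~1--2 (closedness of $\mathcal X$, the transposition check that $\mathcal L$ preserves symmetry, $I_W\in\mathcal X$) and the link to $1\notin\sigma(\mathcal L)$ via ``nonzero spectrum of a compact operator consists of eigenvalues'' correctly fill in details the paper takes for granted.
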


\section{Existence and uniqueness issue for the delay Lyapunov matrix}
\label{sec_existence_uniqueness}
In this section, we prove the principal result of our work, that is, the fact that the existence and uniqueness of the delay Lyapunov matrix follow naturally from the infinite-dimensional theory developed in Section~\ref{sec_general_Hilbert_setting}, and in particular from  Lemma~\ref{thm_eq_Lyapunov_operator}. An explicit connection between the infinite-dimensional theory and the concept of the delay Lyapunov matrix established in the previous section plays a crucial role for our main results.
Given that the monodromy operator $\mathcal U$ is compact, we start with the following definition which extends naturally a similar concept for time-invariant systems.
\begin{definition}
System~\textup{(\ref{sys_delay_periodic})} is said to satisfy the Lyapunov condition, if there are no $\mu_1,\mu_2\in \sigma(\mathcal U)$ such that $\mu_1\mu_2 = 1.$
\end{definition}
Since the monodromy operator admits the explicit representation~(\ref{operator_monodromy})--(\ref{operator_monodromy_2}), where the matrices $K(t,t_0)$ and $A_1(t)$ are real-valued, we observe that  the Lyapunov condition is equivalent to the condition $1\notin \sigma(\mathcal U)\cdot \overline{\sigma(\mathcal U)}$ appeared in Section~\ref{sec_general_Hilbert_setting}.

We first prove an auxiliary lemma.
\begin{lemma}
\label{lemma_zero_P_zero_U}
Consider the operator $\mathcal P_0: \mathcal H\to\mathcal H,$
$$
\mathcal P_0\ph =
\begin{pmatrix}
    U(0,0)\ph_0 + \int_{-h}^0 U(0,h+\theta)A_1(h+\theta)\Phi(\theta)\dd \theta \\
    A_1^T(h+\cdot)\[U(h+\cdot,0)\ph_0 +  \int_{-h}^0 U(h+\cdot,h+s)A_1(h+s)\Phi(s)\dd s\]
\end{pmatrix},
$$
where $U\in\mathcal C(\R^2,\R^{n\times n})$ is a delay Lyapunov matrix of system~\textup{(\ref{sys_delay_periodic})} associated with $W(\tau)\equiv\mathbb{O}$. Then,
$\mathcal P_0\ph=0$ for all $\ph\in \mathcal H$ implies $U(\theta,s)\equiv 0$ for all $\theta,s\in\mathbb{R}.$
\end{lemma}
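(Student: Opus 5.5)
The plan is to show that $\mathcal P_0=0$ forces the whole periodic family of operators attached to $U$ to vanish. From this I will extract two identities valid at every time $t$, namely $U(t,t)=0$ and $U(t,\sigma)A_1(\sigma)=0$ for $\sigma\in[t,t+h]$, and then propagate these zeros to all of $\R^2$ with the PDE property of Definition~\ref{def_Lyap_matrix} (for $W\equiv\mathbb O$). The reason for going through all $t$ is that at $t=0$ alone the identities only involve products with $A_1$, which may be singular. So $\mathcal P_0\ph=0$ by itself does not give $U=0$ on $[0,h]^2$; we need information at every time.

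\emph{Step 1 (zero functional for all $t$).} Let $v_0(t,\ph)$ be the functional~(\ref{func_v_0_old_old}) built from $U$ with $W\equiv\mathbb O$. By the result of \cite{LetZhab2009} recalled in the introduction, $v_0$ is differentiable along solutions a.e. with derivative zero. It is a quadratic $T$-periodic functional $\inner{\ph}{\mathcal P(t)\ph}$ by the periodicity and symmetry of $U$, and $\mathcal P(0)=\mathcal P_0$ by comparison with the formula in the statement. By the necessity part of Theorem~\ref{thm_Hilbert} (valid also for the a.e. version of~(\ref{func_derivative}), as remarked after it), with $\mathcal W\equiv0$, we get $\mathcal P(t)=S^\star(T,t)\mathcal P_0S(T,t)=0$ for $t\in[0,T]$, hence for all $t$ by periodicity. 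The step needing most care is checking that $v_0$ really fits Definition~\ref{def_quadr_functional}: self-adjointness, strong continuity in $t$, and $t\ge0$ versus the a.e. derivative. This should follow from continuity of $U$ and $A_1$.

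\emph{Step 2 (pointwise identities).} Testing $\mathcal P(t)\ph=0$ with $\ph=(\ph_0,0)$ and with $\ph=(0,\Phi)$ for arbitrary $\ph_0$ and $\Phi\in L^2$, the first component gives $U(t,t)=0$. It also gives $U(t,t+h+\theta)A_1(t+h+\theta)=0$ for a.e. $\theta\in[-h,0]$, and hence for all $\theta$ by continuity. Thus $U(t,\sigma)A_1(\sigma)=0$ for all $t\in\R$ and $\sigma\in[t,t+h]$.

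\emph{Step 3 (propagation).} Transposing the PDE property via the symmetry property gives, for $s<\tau$,
$$\frac{\partial U(\tau,s)}{\partial s}=-U(\tau,s)A_0(s)-U(\tau,s+h)A_1(s+h).$$
Fix $\tau$. For $s\in[\tau-h,\tau)$ we have $s+h\in[\tau,\tau+h)$, so the delayed term vanishes by Step~2. This leaves a linear ODE in $s$ with terminal value $U(\tau,\tau)=0$, hence $U(\tau,s)=0$ on $[\tau-h,\tau]$. For $s\in[\tau-2h,\tau-h]$ the delayed term is $U(\tau,s+h)A_1(s+h)=0$ by the previous step, so again $U(\tau,s)=0$. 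Inducting on intervals of length $h$ gives $U(\tau,s)=0$ for all $s\le\tau$. The symmetry property then yields $U(\tau,s)=U^T(s,\tau)=0$ for $s>\tau$ as well, so $U\equiv0$ on $\R^2$.
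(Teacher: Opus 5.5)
Your proof is correct, but it takes a different route from the paper's. The paper never leaves $t=0$. Testing $\mathcal P_0\ph=0$ with $\Phi=0$ and with step functions $\Phi$ gives $U(0,0)=\mathbb{O}$, $A_1^T(h+\theta)U(h+\theta,0)=\mathbb{O}$ and $A_1^T(h+\theta)U(h+\theta,h+\tau)A_1(h+\tau)=\mathbb{O}$. These are exactly the combinations through which $U$ enters $\mathcal L$ in (\ref{operator_L}). A delay Lyapunov matrix with $W\equiv\mathbb{O}$ satisfies $U=\mathcal L U$ on $(-\infty,T]^2$ by (\ref{int_eq_extended}), so $U$ vanishes there, and everywhere by periodicity. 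Your remark that the data at $t=0$ are insufficient is therefore true only pointwise: the integral representation already does the propagation you carry out by hand.

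You instead transport $\mathcal P_0=0$ to every $\mathcal P(t)$ via the constancy of $v_0$ along solutions. This is the same combination of \cite{LetZhab2009} and Theorem~\ref{thm_Hilbert} that the paper itself uses in the uniqueness part of Section~\ref{sec_sufficiency_exist_unique}. You then read off the stronger identities $U(t,t)=\mathbb{O}$ and $U(t,\sigma)A_1(\sigma)=\mathbb{O}$ for $\sigma\in[t,t+h]$, and finish by a method-of-steps backward integration of the transposed PDE property. This is sound. After the first interval your induction uses $U(\tau,s+h)=\mathbb{O}$ itself, so a singular $A_1$ causes no trouble.

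What each route buys: the paper's argument is shorter. However, for an arbitrary matrix satisfying Definition~\ref{def_Lyap_matrix} (such as $\Delta U$ in the uniqueness proof), the identity $U=\mathcal L U$ is taken from the integral properties in \cite{Gomez2016} rather than derived in the paper. Your argument needs neither the fundamental matrix nor the integral equation. It uses only the PDE, symmetry and periodicity properties plus the derivative identity for $v_0$, so it is self-contained relative to Definition~\ref{def_Lyap_matrix}. The cost is the routine check you flag: $v_0$ must be an admissible quadratic periodic functional, and $t\mapsto v_0(t,\mathrm{x}_t)$ must be absolutely continuous (not merely a.e. differentiable), so that its a.e. vanishing derivative really forces constancy.
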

\begin{proof}
    The first part of the proof follows the argument given in \cite{LetZhab2009}. Consider a test function with $\ph_0\ne 0,$ and $\Phi(\theta)=0$ for all $\theta\in[-h,0).$
    For this function, $\mathcal P_0\ph = 0$ implies
    \begin{align*}
    U(0,0)\ph_0=0, \quad A_1^T(h+\theta)U(h+\theta,0)\ph_0=0\quad \forall\,\theta\in[-h,0],\quad \forall\,\ph_0\in\C^n.
    \end{align*}
    Since $\ph_0\in\C^n$ is arbitrary, we conclude that
    \begin{align}
    \label{lemma_B_first_concl}
    U(0,0)=\mathbb{O},\quad A_1^T(h+\theta)U(h+\theta,0)=\mathbb{O}\quad\forall\; \theta\in [-h,0].
    \end{align}
    Now, consider a different test function with $\ph_0\ne 0,$ $\Phi(\theta)=\gamma$ for $\theta\in[\tau,\tau+\eps]$ and $\Phi(\theta)=0$ otherwise, where $-h\<\tau<\tau+\eps<0.$ Since we already have (\ref{lemma_B_first_concl}) and using the symmetry property of $U,$ with this test function we immediately get
    \begin{align*}
A_1^T(h+\theta)\int_{\tau}^{\tau+\eps} U(h+\theta,h+s)A_1(h+s)\dd s\gamma = 0\quad \forall\,\gamma\in\C^n,\quad \forall\,\theta\in[-h,0].
    \end{align*}
Since $\gamma\in\C^n$ and $\tau\in[-h,0)$ are arbitrary and by continuity of $U,$ we conclude that
    \begin{align}
    \label{lemma_B_second_concl}
A_1^T(h+\theta)U(h+\theta,h+\tau)A_1(h+\tau)=\mathbb{O}\quad \forall\,\theta,\tau\in[-h,0].
    \end{align}
Next, by condition $U$ is the delay Lyapunov matrix associated with $W(\tau)\equiv\mathbb{O},$ hence equation~(\ref{int_eq_extended}) tells us that
    \begin{align*}
U(\theta,s)  = [\mathcal L U ](\theta,s)\quad\forall\; \theta,s\<T,
\end{align*}
where $\,\mathcal L$ acts from $\mathcal C([0,h]^2,\R^{n\times n})$ to $\mathcal C((-\infty,T]^2,\R^{n\times n}).$
    But, taking into account the structure (\ref{operator_L}) of the operator $\mathcal L$ we observe that both (\ref{lemma_B_first_concl}) and (\ref{lemma_B_second_concl}) immediately imply $[\mathcal L U](\theta,s)\equiv 0,$ and hence $U(\theta,s)\equiv 0$ for all $\theta,s\<T.$ Since $U$ is defined by periodicity for other arguments, we conclude that $U(\theta,s)\equiv 0$ for all $\theta,s\in\mathbb{R}.$
\end{proof}

We are now ready to present the main result of our work.
\begin{theorem}
\label{thm_uniqueness}
Given a continuous $T$-periodic matrix $W(\tau)=W^T(\tau),$ there exists a unique delay Lyapunov matrix of system~\textup{(\ref{sys_delay_periodic})} associated with $W(\tau),$ if and only if the Lyapunov condition holds.
\end{theorem}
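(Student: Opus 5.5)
Sufficiency (Lyapunov condition $\Rightarrow$ existence and uniqueness) and necessity are handled separately, both by passing between the delay Lyapunov matrix $U$, the integral equation~(\ref{eq_integral_U_0}) for $U_0=U|_{[0,h]^2}$, and the operator equation~(\ref{discr_Lyap_ph}).

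\emph{Sufficiency, uniqueness.} Let $U_1,U_2$ be two delay Lyapunov matrices associated with $W$. Their difference $U=U_1-U_2$ is a delay Lyapunov matrix associated with $W\equiv\mathbb O$. By the integral properties of \cite{Gomez2016}, recalled after Theorem~\ref{thm_integral_eq}, $U|_{[0,h]^2}$ solves~(\ref{eq_integral_U_0}) with zero source, i.e.\ $U_0=\mathcal L U_0$. The lemma on $\mathcal P(0)$ (formula~(\ref{formula_P_0})) then shows that $\mathcal P_0$ built from $U$ solves the homogeneous equation $\mathcal P_0-\mathcal U^\star\mathcal P_0\mathcal U=0$. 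Since the Lyapunov condition is equivalent to $1\notin\sigma(\mathcal U)\cdot\overline{\sigma(\mathcal U)}$, Remark~\ref{remark_Lyap_cond} gives $\mathcal P_0=0$, and Lemma~\ref{lemma_zero_P_zero_U} gives $U\equiv 0$.

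\emph{Sufficiency, existence.} Applied to arbitrary $U_0\in\mathcal X$ with $U_0=\mathcal L U_0$, the same argument works: define $U$ via Theorem~\ref{thm_integral_eq} with $W=\mathbb O$; it is a delay Lyapunov matrix restricting to $U_0$, hence $U_0=0$. So $1\notin\sigma(\mathcal L)$ on $\mathcal X$. The Fredholm alternative (Corollary~\ref{corollary Fredholm_alternative}, using compactness from Lemma~\ref{lemma_L_compact}) yields a unique solution $U_0\in\mathcal X$ of~(\ref{eq_integral_U_0}) for every $W$. Theorem~\ref{thm_integral_eq} then produces a delay Lyapunov matrix. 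Note that $I_W\in\mathcal X$ because of the symmetry of $W$.

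\emph{Necessity.} Suppose $\mu_1\mu_2=1$ with $\mu_i\in\sigma(\mathcal U)$. Both are nonzero, hence Floquet multipliers, i.e.\ eigenvalues of the compact operator $\mathcal U$. I will show that uniqueness fails for $W\equiv\mathbb O$; adding a nonzero solution for $W=\mathbb O$ to any solution for a given $W$ also destroys uniqueness for that $W$. The plan is to construct a nonzero $U_0\in\mathcal X$ with $U_0=\mathcal L U_0$ and then invoke Theorem~\ref{thm_integral_eq}.

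\emph{Construction of the kernel element.} Take the adjoint (transposed) system, whose monodromy operator shares the multipliers. Let $\ph^{(1)}$ be an eigenfunction for $\mu_1$ and $\psi^{(2)}$ an adjoint eigenfunction, or realize both through the transposed fundamental matrix, for $\mu_2$. Form the rank-one (or symmetrized rank-two, with real parts if the multipliers are complex) operator $\mathcal P_0=\eta\inner{\eta'}{\cdot}+\eta'\inner{\eta}{\cdot}$, where $\eta,\eta'$ are left eigenvectors of $\mathcal U$, i.e.\ eigenvectors of $\mathcal U^\star$ with eigenvalues $\bar\mu_1,\bar\mu_2$. Then $\mathcal U^\star\mathcal P_0\mathcal U=\mathcal P_0$.

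\emph{Main obstacle.} This $\mathcal P_0$ must have the structure~(\ref{formula_P_0}). I expect eigenvectors of $\mathcal U^\star$ to have the form $\eta=(\eta_0,A_1^T(h+\cdot)g(h+\cdot))$, since $\mathcal U^\star$ maps $\mathcal H$ into the range of $(\,\cdot\,,A_1^T(h+\cdot)\,\cdot\,)$, which can be seen from the explicit monodromy formula~(\ref{operator_monodromy_2}). Here $g(\theta)=K^T(T,\theta)\eta_0+\int K^T(T+\xi,\theta)\ldots$, continuous on $[0,h]$, with $g(0)=\eta_0$. Setting $U_0(\theta,s)=g(\theta)g'^{\,T}(s)+g'(\theta)g^T(s)$ then reproduces~(\ref{formula_P_0}). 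It remains to verify $U_0=\mathcal L U_0$ directly from $\mathcal U^\star\eta=\bar\mu\eta$ via the derivation in Section~\ref{sec_P_0}, and that $U_0\neq0$. Nonvanishing should follow from $\eta\ne0$ together with Lemma~\ref{lemma_zero_P_zero_U}, applied in reverse.
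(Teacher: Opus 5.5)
Your sufficiency argument is essentially the paper's. Existence is identical. For uniqueness you pass $U_1-U_2$ through the integral equation (the property recalled after Theorem~\ref{thm_integral_eq}) and the lemma of Section~\ref{sec_P_0}, where the paper instead uses the functional (\ref{func_v_0_old_old}), Problem~1 and Theorem~\ref{thm_Hilbert}. Both rest on the same external fact that Lemma~\ref{lemma_zero_P_zero_U} already uses, so this is fine.

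Your necessity argument takes a genuinely different route, and it can be completed. As written, however, the step you flag carries the whole proof, and the mechanism you propose for it does not work.

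\emph{The conjugation in $\mathcal P_0$.} If $\mathcal U^\star\eta=\alpha\eta$ and $\mathcal U^\star\eta'=\beta\eta'$, then $\mathcal U^\star\mathcal P_0\mathcal U=\alpha\bar\beta\,\eta\inner{\eta'}{\cdot}+\bar\alpha\beta\,\eta'\inner{\eta}{\cdot}$. With $\alpha=\bar\mu_1$, $\beta=\bar\mu_2$ the factor is $\bar\mu_1/\mu_1\neq1$ for a nonreal multiplier. Moreover, this sesquilinear $\mathcal P_0$ corresponds in (\ref{formula_P_0}) to the kernel $gg'^\star+g'g^\star$, not $gg'^T+g'g^T$.

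\emph{Section~\ref{sec_P_0} cannot simply be run backwards.} The map $U_0\mapsto\mathcal P(0)$ only sees $U_0(0,0)$, $U_0(0,\cdot)A_1$, $A_1^TU_0(\cdot,0)$ and $A_1^TU_0A_1$. It is therefore not injective when $A_1$ is singular (take $A_1\equiv\mathbb O$). So $\mathcal U^\star\mathcal P_0\mathcal U=\mathcal P_0$ gives at best $\mathcal L(U_0-\mathcal LU_0)=0$, not $U_0=\mathcal LU_0$; you would have to pass to $\mathcal LU_0$.

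\emph{The fix.} The cleanest repair bypasses $\mathcal P_0$ through a factorization. For $f\in\mathcal C([0,h],\C^n)$ put $[\mathcal Mf](\sigma)=K^T(T,\sigma)f(0)+\int_{-h}^0K^T(T+\xi,\sigma)A_1^T(h+\xi)f(h+\xi)\,\dd\xi$; this is the map behind $G$ and (\ref{L_U_s})--(\ref{L_U_t}).

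1. Taking adjoints in (\ref{operator_monodromy})--(\ref{operator_monodromy_2}) gives $\mathcal U^\star\psi=\bigl(g(0),A_1^T(h+\cdot)g(h+\cdot)\bigr)$, with $g(\sigma)=K^T(T,\sigma)\psi_0+\int_{-h}^0K^T(T+\xi,\sigma)\Psi(\xi)\,\dd\xi$.
2. Let $\mathcal U^\star\eta=\alpha\eta$ with $\alpha\neq0$, and set $f=\alpha^{-1}g$ (this normalization is what gives your $g(0)=\eta_0$). Then $\eta=\bigl(f(0),A_1^T(h+\cdot)f(h+\cdot)\bigr)$ and $f\neq0$. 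Substituting back, $\mathcal Mf=\alpha f$ holds pointwise on $[0,h]$, not just after multiplication by $A_1^T$.
3. The four terms of (\ref{operator_L}) collapse to $\mathcal L(fk^T)=(\mathcal Mf)(\mathcal Mk)^T$. Take eigenvectors with $\alpha=\bar\mu_1$, $\beta=\bar\mu_2$; they exist because $\mathcal U$ is compact and $\sigma(\mathcal U^\star)=\overline{\sigma(\mathcal U)}$. Then your transposed $U_0=ff'^T+f'f^T$ satisfies $\mathcal LU_0=\alpha\beta\,U_0=U_0$.
4. Nonvanishing needs no lemma: if $ff'^T+f'f^T\equiv0$ with $f'\neq0$, then $f=cf'$, and $2c\,f'f'^T\equiv0$ forces $f=0$.
5. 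Since $\mathcal L$ has real kernels, $\mathrm{Re}\,U_0$ and $\mathrm{Im}\,U_0$ are symmetric fixed points in $\mathcal X$, and one of them is nonzero. Theorem~\ref{thm_integral_eq} with $W=\mathbb O$ finishes.

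\emph{Comparison with the paper.} With this repair the two proofs build the same object. Using (\ref{fund_matr_s}), $\sigma\mapsto[\mathcal Mf](\sigma)$, $\sigma\leqslant T$, solves the equation in (\ref{sys_dual_changed}) with terminal segment $f$ on $[T,T+h]$. So $\mathcal Mf=\alpha f$ says $f$ is a segment of a Floquet solution of the dual system, and your $U_0$ is, up to normalization, the restriction to $[0,h]^2$ of the paper's $\widetilde U$ in (\ref{U_W_0_violation}).

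The paper writes that matrix on all of $\R^2$ in closed form and checks Definition~\ref{def_Lyap_matrix} by direct differentiation. The cost is importing Lemma~\ref{lemma_spectra_dual} and the Floquet-solution lemma to produce $p$ and $q$. Your route gets the dual Floquet data from the elementary spectral fact about $\mathcal U^\star$ and reuses Theorem~\ref{thm_integral_eq} instead of a fresh verification. Combined with the sufficiency part, it also yields the sharper statement that the Lyapunov condition is equivalent to $1\notin\sigma(\mathcal L)$ on $\mathcal X$.
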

The sufficiency of Theorem~\ref{thm_uniqueness} is established in Section~\ref{sec_sufficiency_exist_unique}, while its necessity is proved in
Section~\ref{sec_necessity_exist_unique}. We note that the sufficiency-existence part of Theorem~\ref{thm_uniqueness} has been recently proven in \cite{Egorov2025} by using a completely different approach.

\subsection{Sufficiency part} We first prove the existence and then the uniqueness of the delay Lyapunov matrix.
\label{sec_sufficiency_exist_unique}

    \textit{Existence part.} According to Theorem~\ref{thm_integral_eq}, any solution of the integral equation~(\ref{eq_integral_U_0}) provides us with the delay Lyapunov matrix of system~(\ref{sys_delay_periodic}). Hence, to prove the existence of $U$ we only need to prove the existence of the solution $U_0\in\mathcal X$ to equation~(\ref{eq_integral_U_0}). According to Corollary~\ref{corollary Fredholm_alternative}, it is enough to prove that $1\notin \sigma(\mathcal L).$ 

    Assume, by contradiction, that $1\in\sigma(\mathcal L),$ that is, there exists a nontrivial solution $U_0\in\mathcal X$ to the homogeneous equation
    $$
U_0(\theta,s) = [\mathcal L U_0](\theta,s),\quad \theta,s\in [0,h].
    $$
    Let us construct the operator $\mathcal P(0)$ by formula~(\ref{formula_P_0}) with this choice of $U_0.$ The crucial fact about this $\mathcal P(0)$ is that, according to our argument in Section~\ref{sec_P_0}, it solves by construction the homogeneous operator equation
    $$
\mathcal U^\star \mathcal P(0)\mathcal U - \mathcal P(0) = 0. 
    $$
    However, since the Lyapunov condition holds, Remark~\ref{remark_Lyap_cond} implies that the only solution to the above equation is the trivial one: $\mathcal P(0)\ph = 0$ $\forall\,\ph\in\mathcal H.$ Consider now an extension of the matrix $U_0$ to $\R^2$ given by
\begin{align*}
U(\theta,s)  &= [\mathcal L U_0 ](\theta,s),\quad\text{if}\quad \theta,s\<T,\\
U(\theta,s)  &= U(\theta-T,s-T)\quad \text{otherwise}.
\end{align*}
Theorem~\ref{thm_integral_eq} implies that $U$ is nothing else but the delay Lyapunov matrix associated with $W(\tau)=\mathbb{O}.$ By Lemma~\ref{lemma_zero_P_zero_U} we conclude that $\mathcal P(0)\ph = 0$ $\forall\,\ph\in\mathcal H$ implies $U(\theta,s)\equiv 0$ for all $\theta,s\in\mathbb{R}.$ This contradicts to our assumption that $U_0$ is a nontrivial matrix.
    
\textit{Uniqueness part.} Assume that $U_1(\theta,s)$ and $U_2(\theta,s)$ are two nontrivial delay Lyapunov matrices associated with the same continuous symmetric $T$-periodic matrix $W(\tau).$ Then, they define two quadratic functionals of the form~(\ref{func_v_0_old_old}), $v_1(t,\ph)=\langle\ph, \mathcal P_1(t)\ph \rangle$ and $v_2(t,\ph)=\langle\ph, \mathcal P_2(t)\ph \rangle$
which both solve Problem~1 by construction. However, by
Theorem~\ref{thm_Hilbert} this implies that the corresponding operators
$\mathcal P_1(0)$ and $\mathcal P_2(0)$ both solve the operator equation~(\ref{discr_Lyap}),
\begin{align*}
\mathcal P_i(0) - \mathcal U^\star \mathcal P_i(0) \mathcal U = \int_0^T S^\star(\xi,0)\mathcal W(\xi) S(\xi,0)\dd \xi,\quad i=1,2.
\end{align*}
Since the Lyapunov condition holds, by Remark~\ref{remark_Lyap_cond} we immediately conclude that $\Delta \mathcal P(0) = \mathcal P_1(0)-\mathcal P_2(0)\equiv 0.$ On the other hand, by construction we have
\begin{align*}
\Delta \mathcal P(0)\ph =
\begin{pmatrix}
    \Delta U(0,0)\ph_0 + \int_{-h}^0 \Delta U(0,h+\theta)A_1(h+\theta)\Phi(\theta)\dd \theta \\
    A_1^T(h+\cdot)\left[\Delta U(h+\cdot,0)\ph_0 +  \int_{-h}^0 \Delta U(h+\cdot,h+s)A_1(h+s)\Phi(s)\dd s\right]
\end{pmatrix},
\end{align*}
where $\Delta U(\theta,s) = U_1(\theta,s)-U_2(\theta,s)$ is obviously the delay Lyapunov matrix associated with $W(\tau)\equiv\mathbb{O}.$ Applying now Lemma~\ref{lemma_zero_P_zero_U}, we conclude that $\Delta U(\theta,s)\equiv 0$ for all $\theta,s\in\mathbb{R},$ which proves the uniqueness of the delay Lyapunov matrix.

\subsection{Necessity part}
\label{sec_necessity_exist_unique}
Before proceeding with the proof of the necessity part of Theorem~\ref{thm_uniqueness}, we state several auxiliary results.

\begin{lemma} \textup{(\cite{HaleVerduyn}, Lemma~1.2, p.~237)}
    The value $\mu=e^{\lambda T}$ is a Floquet multiplier of system~\textup{(\ref{sys_delay_periodic})} \textup{(}that is, $\mu\in \sigma(\mathcal U)\setminus\{0\}$\textup{)}, if and only if system~\textup{(\ref{sys_delay_periodic})} admits a solution of the form $e^{\lambda t} p(t),$ where $p(t)$ is a nontrivial $T$-periodic vector function.
\end{lemma}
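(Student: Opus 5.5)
The plan is to prove both directions directly from the definition of $\mathcal U=S(T,0)$ and the semigroup-type identity $S(t+T,T)=S(t,0)$, which follows from periodicity. Write $\mu=e^{\lambda T}$ for a fixed choice of $\lambda\in\C$; since $\mu\neq0$, such a logarithm exists.

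\emph{Necessity (multiplier $\Rightarrow$ Floquet solution).} Suppose $\mu\in\sigma(\mathcal U)\setminus\{0\}$. Because $\mathcal U$ is compact (as $T\>h$), $\mu$ is an eigenvalue, so there is $\ph\neq0$ with $\mathcal U\ph=\mu\ph$. Let $x(t)=x(t,0,\ph)$ and $\mathrm x_t=S(t,0)\ph$.

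By periodicity and the evolution property,
\[
\mathrm x_{t+T}=S(t+T,T)S(T,0)\ph=S(t,0)\,\mu\ph=\mu\,\mathrm x_t ,
\]
so $x(t+T)=\mu x(t)$ for all $t\>0$ (and a.e. on the initial segment). Define $p(t)=e^{-\lambda t}x(t)$. Then $p(t+T)=e^{-\lambda t}\mu^{-1}\mu x(t)=p(t)$, so $p$ is $T$-periodic.

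The solution $x$ is defined on $[-h,\infty)$. To obtain a solution on all of $\R$, or at least one of the required form on the whole domain, I use the relation $x(t+T)=\mu x(t)$ to extend $x$ backwards by $x(t)=\mu^{-1}x(t+T)$. Periodicity of $A_0,A_1$ shows this extension still satisfies~(\ref{sys_delay_periodic}) a.e. Nontriviality of $p$ follows from $\ph\neq0$: if $x\equiv0$ on $[-h,\infty)$, then both components of $\ph$ vanish.

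\emph{Sufficiency.} Suppose $x(t)=e^{\lambda t}p(t)$ is a nontrivial solution with $p$ $T$-periodic. Take $\ph=\mathrm x_0=(x(0),x_0)\in\mathcal H$. Then $x(t+T)=e^{\lambda T}x(t)$, so
\[
\mathcal U\ph=\mathrm x_T=\mu\,\mathrm x_0=\mu\ph .
\]
It remains to show $\ph\neq0$. If $\ph=0$, uniqueness for the initial value problem~(\ref{sys_delay_periodic})--(\ref{sys_IVP}) gives $x\equiv0$ for $t\>0$, and $x=0$ a.e. on $[-h,0]$ as well. Hence $p\equiv0$ on $[-h,\infty)$, and by periodicity (continuity, with $T\>h$) $p\equiv0$ everywhere, contradicting nontriviality. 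So $\mu$ is a nonzero eigenvalue, i.e.\ a Floquet multiplier.

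The main subtlety, though mild, is the domain bookkeeping. One must make precise in which sense a "solution of the form $e^{\lambda t}p(t)$" lives: on $[-h,\infty)$ or on $\R$. This is handled by the backward extension via $\mu^{-1}$, which is possible exactly because $\mu\neq0$. One must also check nontriviality in both directions using uniqueness of solutions.
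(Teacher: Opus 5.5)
Your proof is correct and follows essentially the route the paper indicates. The paper only cites Hale--Verduyn Lunel and sketches the argument: the eigenvector of $\mathcal U$ is exactly the initial state $\mathrm{x}_0=\bigl(p(0),e^{\lambda\cdot}p(\cdot)\bigr)$ of the Floquet solution, and compactness guarantees that nonzero spectral values are eigenvalues. The differences are minor: you verify $\mathcal U\mathrm{x}_0=e^{\lambda T}\mathrm{x}_0$ through the periodic cocycle identities $S(t+T,T)S(T,0)=S(t+T,0)$ and $S(t+T,T)=S(t,0)$ rather than by writing it out with the Cauchy formula, and your final appeal to continuity and $T\geqslant h$ is unnecessary, since periodicity alone carries $p\equiv 0$ from $[0,\infty)$ to all of $\mathbb{R}$.
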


This lemma is based on the fact that $\psi = \begin{pmatrix}p(0) \\ e^{\lambda\cdot} p(\cdot) \end{pmatrix}$ is nothing else but an eigenfunction of the monodromy operator corresponding to the eigenvalue $\mu=e^{\lambda T},$ that is, $\,\mathcal U \psi = e^{\lambda T}\psi.$ This fact can be directly shown 
using the Cauchy formula and the explicit expression of the monodromy operator through the fundamental matrix.

\begin{corollary}
\label{corollary_Lyap_cond}
System~\textup{(\ref{sys_delay_periodic})} satisfies \textit{the Lyapunov condition}, if and only if  it does not admit two solutions of the form $e^{\lambda t}p(t)$ and $e^{-\lambda t}q(t),$ where $p$ and $q$ are nontrivial $T$-periodic vector functions and $\lambda\in\mathbb{C},$ at the same time.
\end{corollary}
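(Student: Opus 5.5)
The corollary follows by combining the definition of the Lyapunov condition with the preceding lemma from \cite{HaleVerduyn}. The argument has two parts: first rule out the multiplier $0$ and rewrite $\mu_1\mu_2=1$ in exponential form, then translate each multiplier into a Floquet-type solution.

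First, the Lyapunov condition only concerns multipliers whose product is $1$. If $\mu_1\mu_2=1$ with $\mu_1,\mu_2\in\sigma(\mathcal U)$, then both $\mu_1$ and $\mu_2$ are nonzero. So the value $0\in\sigma(\mathcal U)$, which is present because $\mathcal U$ is compact on an infinite-dimensional space, plays no role. Both multipliers are Floquet multipliers, and since $\mathcal U$ is compact they are eigenvalues.

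Second, write the nonzero multiplier $\mu_1=e^{\lambda T}$ for some $\lambda\in\C$. This is always possible by taking a branch of the logarithm; $\lambda$ is determined up to $2\pi i k/T$. Then $\mu_2=\mu_1^{-1}=e^{-\lambda T}$. Consequently, a violation of the Lyapunov condition is exactly the existence of $\lambda\in\C$ with both $e^{\lambda T}$ and $e^{-\lambda T}$ in $\sigma(\mathcal U)\setminus\{0\}$.

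Third, apply the cited lemma to each multiplier separately. The multiplier $e^{\lambda T}$ belongs to $\sigma(\mathcal U)\setminus\{0\}$ if and only if there is a solution $e^{\lambda t}p(t)$ with $p$ nontrivial and $T$-periodic. Likewise, $e^{-\lambda T}$ is a multiplier if and only if there is a solution $e^{-\lambda t}q(t)$ with $q$ nontrivial and $T$-periodic.

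Combining the three steps, the system violates the Lyapunov condition if and only if it admits both solutions $e^{\lambda t}p(t)$ and $e^{-\lambda t}q(t)$ simultaneously. Taking the contrapositive gives the corollary.

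The only delicate point is the choice of exponent in the lemma. A given multiplier corresponds to many values of $\lambda$, differing by $2\pi i k/T$, and the lemma applies to any of them. A different choice changes $p$ by a factor $e^{2\pi i k t/T}$, which is still $T$-periodic. Therefore the same $\lambda$ can be used for both solutions, with its negative in the second. This is a bookkeeping check rather than a real obstacle.
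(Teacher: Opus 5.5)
Your proposal is correct and follows the paper's argument: the paper also notes that $\mu_1\mu_2=1$ gives $\mu_1=e^{\lambda_1T}$ and $\mu_2=e^{\lambda_2T}$ with $\lambda_1+\lambda_2=0$, and then applies the Hale--Verduyn lemma to each multiplier. Your remarks on excluding the multiplier $0$ and on the $2\pi i k/T$ ambiguity in $\lambda$ make explicit what the paper leaves implicit.
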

Indeed, for any $\mu_1,\mu_2\in \sigma(\mathcal U)$ such that $\mu_1\mu_2 = 1$ there are two values $\lambda_1,\lambda_2\in\mathbb{C}$ such that $\mu_1=e^{\lambda_1 T},$ $\mu_2=e^{\lambda_2 T},$ and $\lambda_1+\lambda_2=0.$

Next, in order to accomplish the necessity proof, let us consider
a dual system to system~(\ref{sys_delay_periodic}) introduced in \cite{MichGom2020}:
\begin{equation}
\label{sys_delay_periodic_dual}
\dot z(t) =  A_0^T(-t)z(t) + A_1^T(-t+h)z(t-h).
\end{equation}
It is convenient to reverse the time as $\tau=-t,$ $\zeta(\tau)=z(t).$ By doing this, we rewrite the dual system as
\begin{equation}
\label{sys_dual_changed}
\dot \zeta(\tau) =  - A_0^T(\tau)\zeta(\tau) - A_1^T(\tau+h)\zeta(\tau+h),\quad \tau\<0.
\end{equation}
The following lemma is crucial:
\begin{lemma} \textup{\cite{MichGom2020}}
\label{lemma_spectra_dual}
    Let $\mathcal U$ and $\mathcal U_D$ be the monodromy operators corresponding to systems~\textup{(\ref{sys_delay_periodic})} and \textup{(\ref{sys_delay_periodic_dual})}, respectively. Then, their spectra satisfy
    $$\sigma(\mathcal U)\setminus \{0\} = \sigma(\mathcal U_D)\setminus \{0\}.$$
\end{lemma}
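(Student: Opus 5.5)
We show that the nonzero spectra of $\mathcal U$ and $\mathcal U_D$ coincide by matching Floquet solutions of the two systems, i.e.\ through the characterization of multipliers in the lemma from \cite{HaleVerduyn} quoted above, rather than through an abstract adjoint argument. The bridge is the time-reversed form~(\ref{sys_dual_changed}): $z(t)$ solves~(\ref{sys_delay_periodic_dual}) exactly when $\zeta(\tau)=z(-\tau)$ solves~(\ref{sys_dual_changed}). In this form the dual system is the formal adjoint of~(\ref{sys_delay_periodic}), with the same structure as the equation~(\ref{fund_matr_s}) satisfied by $K(t,s)$ in its second argument. Concretely, for each fixed $t$ the rows of $s\mapsto K(t,s)$ solve~(\ref{sys_dual_changed}) in $s$ backwards from $s=t$, and the terminal condition is $K(t,t)=I$, $K(t,s)=\mathbb O$ for $s>t$.

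\emph{Step 1: monodromy operator of~(\ref{sys_dual_changed}).} Since $T\>h$, I would write the monodromy operator $\mathcal V$ of~(\ref{sys_dual_changed}) explicitly, mapping data on $[T,T+h]$ to the state on $[0,h]$. This gives an analogue of~(\ref{operator_monodromy_2}) in which $K(T,\cdot)$ enters through its second argument. I would also check that $\mathcal V$ is similar to $\mathcal U_D$. Setting $\tilde z(t)=z(t+T)$ shifts time, and periodicity of the coefficients makes the monodromy operators over $[-T,0]$ and $[0,T]$ conjugate. So $\sigma(\mathcal U_D)\setminus\{0\}=\sigma(\mathcal V)\setminus\{0\}$.

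\emph{Step 2: relate $\mathcal V$ to $\mathcal U^\star$.} Using the Cauchy formula~(\ref{formula_Cauchy}), I would compute $\mathcal U^\star$ on $\mathcal H$, using the scalar product of $\mathcal H$ and swapping the integrals. The kernel that appears is $K^T(T+\theta,h+\tau)A_1(h+\tau)$ transposed, which is the same kernel that appears in $\mathcal V$ once the dual state is identified with $\mathcal H$ by a bounded, boundedly invertible pairing. The natural candidate is the Hale bilinear form
\[
(\psi,\ph)=\psi_0^\star\ph_0+\int_{-h}^0 \psi^\star(\xi+h)A_1(\xi+h)\Phi(\xi)\,\dd\xi .
\]
Without this identification the two operators are not literally adjoints. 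Instead, I would aim for a factorization $\mathcal U = \mathcal B\mathcal C$, $\mathcal V^\star=\mathcal C\mathcal B$ (or $\mathcal V$ and $\mathcal U^\star$ intertwined by a bounded operator). Once that factorization holds, $\sigma(\mathcal{BC})\setminus\{0\}=\sigma(\mathcal{CB})\setminus\{0\}$ finishes the argument, because $\sigma(\mathcal U^\star)=\overline{\sigma(\mathcal U)}$ and the multipliers are symmetric under conjugation (the coefficients are real).

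\emph{Alternative route avoiding operator algebra.} Since both monodromy operators are compact, it suffices to match eigenvalues. Let $\mu=e^{\lambda T}$ be a multiplier of~(\ref{sys_delay_periodic}) with $\mu\neq0$. By Fredholm theory $\mu$ is then an eigenvalue of $\mathcal U^\star$ with some eigenvector $\psi$. I would show that the function $\zeta(\tau)$ built from $\psi$ through the Cauchy formula in the second argument of $K$ solves~(\ref{sys_dual_changed}) and satisfies $\zeta(\tau-T)=\mu\,\zeta(\tau)$. That makes it a Floquet solution of the dual system, with multiplier $\mu$ after reversing time. The reverse inclusion follows by symmetry, since the dual of the dual system is the original one. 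The main obstacle is this middle computation: identifying the adjoint eigenvector with a genuine dual solution. It involves the jump of $K$ at $t=s$, the interval splitting $T+\theta\gtrless t$ already seen in Section~\ref{sec_P_t}, and the possibility that $A_1$ is singular so that the pairing degenerates. To handle a singular $A_1$, I would verify the dual equation only almost everywhere and use Lemma~\ref{lemma_aux_fund} to handle the composition.
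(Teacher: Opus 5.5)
The paper gives no proof of this lemma; it quotes it from \cite{MichGom2020}, so your argument has to stand on its own. Your factorization route works, with no assumption on $A_1$. As written, though, the decisive step is only announced, and the remark preceding it should be dropped. The Hale pairing is $(\psi,\ph)=\inner{\mathcal C^\star\psi}{\ph}$ with $\mathcal C\ph=\bigl(\ph_0,A_1(h+\cdot)\Phi\bigr)$. It therefore only yields the intertwining $\mathcal C^\star\mathcal V=\mathcal U^\star\mathcal C^\star$, which is a similarity only if $\det A_1(t)\neq0$ for all $t$. The argument closes as follows. By (\ref{formula_Cauchy}) with $t_0=0$ and $T\ge h$, $\mathcal U=\mathcal B\mathcal C$, where
\begin{align*}
\mathcal B(a,g)=\Bigl(&K(T,0)a+\int_{-h}^0 K(T,h+\tau)g(\tau)\,\dd\tau,\\
&K(T+\cdot,0)a+\int_{-h}^0 K(T+\cdot,h+\tau)g(\tau)\,\dd\tau\Bigr).
\end{align*}
For the dual system you need the Cauchy formula you only allude to. If $\zeta(T)=\zeta_0$ and $\zeta(T+h+\xi)=\Xi(\xi)$ for $\xi\in[-h,0]$, then
\begin{align*}
\zeta(s)=K^T(T,s)\zeta_0+\int_{-h}^0 K^T(T+\xi,s)A_1^T(h+\xi)\Xi(\xi)\,\dd\xi,\qquad s\le T.
\end{align*}
You check this by differentiating in $s$ with (\ref{fund_matr_s}). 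For $s\in(T-h,T]$ the jump of $K$ on the diagonal produces exactly the term $-A_1^T(s+h)\zeta(s+h)$; this is the same computation as in Case~1) of Lemma~\ref{lemma_derivative_U}, and the paper's $G(\cdot,s)$ has exactly this form. Write the dual state at time $\sigma$ as $\bigl(\zeta(\sigma),\zeta(\sigma+h+\cdot)\bigr)\in\mathcal H$. Since $K$ and $A_1$ are real, the map from $\sigma=T$ to $\sigma=0$ is then $\mathcal V=\mathcal B^\star\mathcal C^\star$. Hence $\mathcal V^\star=\mathcal C\mathcal B$ literally, and $\sigma(\mathcal V)\setminus\{0\}=\overline{\sigma(\mathcal U)}\setminus\{0\}=\sigma(\mathcal U)\setminus\{0\}$.

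Your Step~1 then finishes the proof. By periodicity, the map from $\sigma=0$ to $\sigma=-T$ is in fact equal to $\mathcal V$, not just conjugate. The unitary reflection $\Phi\mapsto\Phi(-h-\cdot)$ on the $L^2$ component conjugates it to $\mathcal U_D$.

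With this in place your alternative route is unnecessary. If you keep it, note that the adjoint eigenvalue is $\bar\mu$, which is harmless because the coefficients are real. The singular-$A_1$ concern also disappears: for $\mathcal U^\star\psi=\bar\mu\psi$, take $\chi=\mathcal B^\star\psi$. Then $\mathcal V\chi=\bar\mu\chi$, and $\chi\neq0$ because $\mathcal C^\star\chi=\bar\mu\psi\neq0$.
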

Lemma~\ref{lemma_spectra_dual} implies that system~(\ref{sys_delay_periodic}) satisfies the Lyapunov condition, if and only if the dual system~(\ref{sys_delay_periodic_dual}) does. 
We are now ready to proceed with the proof of Theorem~\ref{thm_uniqueness}.

\textit{Proof of the necessity part of Theorem~\ref{thm_uniqueness}}.
Assume that for any continuous $T$-periodic $W(\tau)=W^T(\tau)$ there exists a unique delay Lyapunov matrix of system~\textup{(\ref{sys_delay_periodic})} associated with $W(\tau)$ but the Lyapunov condition is violated. By Lemma~\ref{lemma_spectra_dual}, the Lyapunov condition is violated for the dual system~(\ref{sys_delay_periodic_dual}), too. We use an equivalent definition of the Lyapunov condition provided by Corollary~\ref{corollary_Lyap_cond}. Moreover, it is easy to see that this equivalent formulation also applies to the system~(\ref{sys_dual_changed}). That is, violation of the Lyapunov condition implies that system~(\ref{sys_dual_changed}) has two solutions of the form $\zeta_1(\tau) = e^{\lambda \tau}p(\tau)$ and $\zeta_2(\tau) = e^{-\lambda \tau}q(\tau),$ where $p(\tau+T)\equiv p(\tau),$ $q(\tau+T)\equiv q(\tau).$

We substitute these functions in the system equation~(\ref{sys_dual_changed}) and get the following equations that by construction are satisfied by the periodic functions $p$ and $q:$
\begin{align}
\label{eq_p_q}
 p'(\tau) &= -(\lambda I + A_0^T(\tau))p(\tau) - A_1^T(\tau+h)e^{\lambda h} p(\tau+h),\\
 q'(\tau) &= (\lambda I - A_0^T(\tau)) q(\tau) - A_1^T(\tau+h)e^{-\lambda h} q(\tau+h).\notag
\end{align}
Note that, although in principle we defined the dual system~(\ref{sys_dual_changed}) for $\tau\<0,$ the particular solutions $\zeta_1(\tau)$ and $\zeta_2(\tau)$ are defined for all $\tau.$ Moreover, equations~(\ref{eq_p_q}) imply that they also  satisfy system~(\ref{sys_dual_changed}) for all $\tau.$

Now, let us construct the matrix
\begin{align}
\label{U_W_0_violation}
\widetilde{U}(\theta,s) = \dfrac{1}{2}\Bigl(e^{\lambda(\theta-s)}p(\theta)q^T(s)+ e^{\lambda(s-\theta)}q(\theta) p^T(s)\Bigr),\quad \theta,s\in\R.
\end{align}

In the remaining part of the proof we show that the matrix (\ref{U_W_0_violation}) is the delay Lyapunov matrix of system~(\ref{sys_delay_periodic}) associated with $W(\tau)\equiv\mathbb{O}.$
Obviously, this matrix satisfies the symmetry and the periodicity conditions by construction:
$$
\widetilde{U}(\theta,s)=\widetilde{U}^T(s,\theta),\quad \widetilde{U}(\theta+T,s+T)=\widetilde{U}(\theta,s).
$$
Let us now compute its partial derivative with respect to $\theta:$
\begin{align}
\notag\dfrac{\partial \widetilde{U}(\theta,s)}{\partial \theta} &= \dfrac{\lambda}{2} e^{\lambda(\theta-s)}p(\theta)q^T(s) + \dfrac{1}{2} e^{\lambda(\theta-s)}\Bigl(-(\lambda I + A_0^T(\theta))p(\theta) \\ \notag &- A_1^T(\theta+h)e^{\lambda h} p(\theta+h)\Bigr)q^T(s) 
 -\dfrac{\lambda}{2} e^{\lambda(s-\theta)}q(\theta) p^T(s) \\ \notag &+ \dfrac{1}{2}e^{\lambda(s-\theta)} \Bigl((\lambda I - A_0^T(\theta)) q(\theta) - A_1^T(\theta+h)e^{-\lambda h} q(\theta+h)\Bigr) p^T(s)\\
 &= - A_0^T(\theta) \widetilde{U}(\theta,s) - A_1^T(h+\theta)\widetilde{U}(h+\theta,s). \label{der_U_tilde_theta}
\end{align}
Similarly, we compute the derivative of the matrix (\ref{U_W_0_violation}) with respect to $s:$
\begin{align}
\label{der_U_tilde_s}
\dfrac{\partial \widetilde{U}(\theta,s)}{\partial s}
= -\widetilde{U}(\theta,s)A_0(s) - \widetilde{U}(\theta,h+s) A_1(h+s).
\end{align}
Equation~(\ref{der_U_tilde_theta}) coincides with the PDE property of the delay Lyapunov matrix. Similarly to the proof of Theorem~\ref{thm_integral_eq}, equations (\ref{der_U_tilde_theta}) and (\ref{der_U_tilde_s}) together imply the ODE property of the delay Lyapunov matrix with $W(\tau)\equiv\mathbb{O}.$
We conclude that the matrix function $\widetilde{U}(\theta,s)$ defined by~(\ref{U_W_0_violation}) is indeed a nontrivial delay Lyapunov matrix associated with $W(\tau)\equiv\mathbb{O}.$ Clearly, this implies that, given a delay Lyapunov matrix $U(\theta,s)$ associated with $W(\tau),$ the matrix $U(\theta,s)+\widetilde{U}(\theta,s)$ is a different delay Lyapunov matrix associated with $W(\tau),$ which contradicts to the uniqueness of the delay Lyapunov matrix. $\hfill\Box$

\section{Conclusion}
In this work, an explicit connection between the periodic Lyapunov equation on a Hilbert space and the delay Lyapunov matrix framework for linear periodic time-delay systems is established. This connection allows us proving that the delay Lyapunov matrix of system~(\ref{sys_delay_periodic}) is unique for any $W,$ if and only if its monodromy operator does not have reciprocal eigenvalues, which is an analogue of the Lyapunov condition in a time-invariant case. As a by-product we show that the periodic Lyapunov equation on a Hilbert space allows constructing the functionals with prescribed derivatives for linear periodic delay systems without a preliminary exponential stability assumption. 
We note that the construction of delay Lyapunov matrices for periodic systems still remains a non-trivial problem, especially in the case when $T\ne h.$ Some recent developments on the construction issue for the case $T=h$ can be found in \cite{AlVel_Lyap_matr_scalar,AlVel_Lyap_matr_vector}. 

\section*{Acknowledgments}
The \LaTeX{} code for Figure~\ref{Fig_1} was generated with the assistance of Gemini and Cursor Composer~2.

\bibliographystyle{siamplain}

\end{document}

%% file: ex_shared.tex
\usepackage{lipsum}
\usepackage{amsfonts}
\usepackage{graphicx}
\usepackage{epstopdf}
\usepackage{algorithmic}
\ifpdf
  \DeclareGraphicsExtensions{.eps,.pdf,.png,.jpg}
\else
  \DeclareGraphicsExtensions{.eps}
\fi

\newsiamremark{remark}{Remark}
\newsiamremark{hypothesis}{Hypothesis}
\crefname{hypothesis}{Hypothesis}{Hypotheses}
\newsiamthm{claim}{Claim}
\newsiamremark{fact}{Fact}
\crefname{fact}{Fact}{Facts}

\headers{Delay Periodic Lyapunov Equation}{I.V. Aleksandrova, and J.J.L. Vel\'azquez}

\title{Delay Periodic Lyapunov Equation\thanks{This study was not presented at any conference.\funding{This work was supported by the German Science Foundation (DFG, Deutsche Forschungsgemeinschaft), project 552846308, and by the Germany’s Excellence Strategy-EXC2047/2-390685813 funded by DFG.}}}

\author{Irina V. Aleksandrova\thanks{Institute for Applied Mathematics, University of Bonn, Endenicher Allee 60, D-53115, Bonn, Germany 
  (\email{aleksandrova@iam.uni-bonn.de}).}
\and Juan J.L. Vel\'azquez\thanks{Institute for Applied Mathematics, University of Bonn, Endenicher Allee 60, D-53115, Bonn, Germany
  (\email{velazquez@iam.uni-bonn.de}).}
  }

\usepackage{amsopn}
